\numberwithin{theorem}{section}
\def\vec#1{{\bf #1}}
\NewDocumentCommand{\dgal}{sO{}m}{%
	\IfBooleanTF{#1}
	{\dgalext{#3}}
	{\dgalx[#2]{#3}}%
}
\NewDocumentCommand{\dgalext}{m}{%
	\sbox0{%
		\mathsurround=0pt 
		$\left\{\vphantom{#1}\right.\kern-\nulldelimiterspace$%
	}%
	\sbox2{\{}%
	\ifdim\ht0=\ht2
	\{\kern-.625\wd2 \{#1\}\kern-.625\wd2 \}%
	\else
	\left\{\kern-.7\wd0\left\{#1\right\}\kern-.7\wd0\right\}%
	\fi
}
\NewDocumentCommand{\dgalx}{om}{%
	\sbox0{\mathsurround=0pt$#1\{$}%
	\sbox2{\{}%
	\ifdim\ht0=\ht2
	\{\kern-.625\wd2 \{#2\}\kern-.625\wd2 \}%
	\else
	\mathopen{#1\{\kern-.7\wd0 #1\{}
	#2
	\mathclose{#1\}\kern-.7\wd0 #1\}}
	\fi
}
\def\jump#1{\llbracket #1 \rrbracket }
\DeclareFontFamily{OMX}{MnSymbolE}{}
\DeclareSymbolFont{MnLargeSymbols}{OMX}{MnSymbolE}{m}{n}
\DeclareFontShape{OMX}{MnSymbolE}{m}{n}{
	<-6>  MnSymbolE5
	<6-7>  MnSymbolE6
	<7-8>  MnSymbolE7
	<8-9>  MnSymbolE8
	<9-10> MnSymbolE9
	<10-12> MnSymbolE10
	<12->   MnSymbolE12
}{}
\DeclareFontShape{OMX}{MnSymbolE}{b}{n}{
	<-6>  MnSymbolE-Bold5
	<6-7>  MnSymbolE-Bold6
	<7-8>  MnSymbolE-Bold7
	<8-9>  MnSymbolE-Bold8
	<9-10> MnSymbolE-Bold9
	<10-12> MnSymbolE-Bold10
	<12->   MnSymbolE-Bold12
}{}
\let\llangle\@undefined
\let\rrangle\@undefined
\DeclareMathDelimiter{\llangle}{\mathopen}%
{MnLargeSymbols}{'164}{MnLargeSymbols}{'164}
\DeclareMathDelimiter{\rrangle}{\mathclose}%
{MnLargeSymbols}{'171}{MnLargeSymbols}{'171}
\title{
Entropy Symmetrization and High-Order Accurate Entropy Stable Numerical Schemes for Relativistic MHD Equations
}
\author{
  Kailiang Wu\thanks{Department of Mathematics, Southern University of Science and Technology, 
  	Shenzhen, Guangdong 518055, China  ({\tt wukl@sustech.edu.cn}).}
  \and
  Chi-Wang Shu\thanks{Division of Applied Mathematics, Brown University, Providence, RI 02912, USA ({\tt Chi-Wang\_Shu@brown.edu}). Research is supported in part by NSF grant DMS-1719410. }  
}
\begin{document}

\maketitle


\begin{abstract}
		This paper presents entropy symmetrization and high-order accurate entropy stable schemes for the relativistic magnetohydrodynamic (RMHD) equations. 
It is shown that the conservative RMHD equations are not 
symmetrizable and do not admit 
a thermodynamic entropy pair. To address this issue, a symmetrizable RMHD system, equipped with a convex thermodynamic entropy pair, 
is first proposed by adding a source term into the equations, providing an analogue to 
the non-relativistic Godunov--Powell system.   
Arbitrarily high-order accurate entropy stable finite difference schemes are developed on Cartesian meshes based on the symmetrizable RMHD system. 
The crucial ingredients of these schemes include (\romannumeral1)  
affordable explicit entropy conservative fluxes which are technically derived through carefully selected parameter variables,  
(\romannumeral2) a special high-order discretization of the source term in the symmetrizable RMHD system, and 
(\romannumeral3) suitable high-order dissipative operators based on essentially non-oscillatory reconstruction to ensure the entropy stability.  
Several numerical tests demonstrate the accuracy and robustness of the proposed entropy stable schemes. 
\end{abstract}

\begin{keywords}
	relativistic  magnetohydrodynamics, 
	symmetrizable, 
	entropy conservative, entropy stable, 
	high-order accuracy 
\end{keywords}

\begin{AMS}
  35L65, 65M12, 65M06, 76W05, 76Y05 
\end{AMS}

\section{Introduction} 
\label{sec:intro}

Magnetohydrodynamics (MHDs) describe the dynamics of electrically-conducting
fluids in the presence of magnetic field and play an important role in many fields
including astrophysics, plasma physics and space physics. 
In many cases,  astrophysics and high energy physics 
often 
involve 
fluid flow at nearly speed of light so that the relativistic effect should be taken into account. 
Relativistic MHDs (RMHDs) have applications in investigating 
astrophysical scenarios from stellar to galactic scales, e.g., gamma-ray bursts,  astrophysical jets,
core collapse super-novae, 
formation
of black holes, and merging of compact binaries.

In the $d$-dimensional space, the governing equations of special RMHDs
can be written as a system of hyperbolic conservation laws
\begin{equation}\label{eq:RMHD}
\frac{{\partial {\bf U}}}{{\partial t}} 
+ \sum_{i=1}^d \frac{\partial {\bf F}_i ( {\bf U} ) }{\partial x_i}  = {\bf 0},
\end{equation}
along with an additional divergence-free condition on the magnetic field
\begin{equation}\label{eq:DivB}
\nabla \cdot {\bf B} := \sum_{i=1}^d \frac{\partial B_i}{\partial x_i}  =0,
\end{equation}
where $d=1,$ $2$ or $3$. Here we employ the geometrized unit system so that the speed of light $c$ in vacuum is equal to one. 
In \eqref{eq:RMHD}, the conservative vector $\vec U = ( D,\vec m^\top,\vec B^\top,E )^{\top}$,  and the flux in the $x_i$-direction is defined by
\begin{equation*}
\vec F_i(\vec U) = \left( D v_i,  v_i \vec m^\top  -  B_i \big( \gamma^{-2} \vec B^\top + (\vec v \cdot \vec B) \vec v^\top \big)  + p_{tot}  \vec e_i^\top, v_i \vec B^\top - B_i \vec v^\top ,m_i \right)^{\top},
\end{equation*}
with the mass density $D = \rho \gamma$, the momentum density vector $\vec m = (\rho h{\gamma^2} + |\vec B|^2) \vec v - (\vec v \cdot \vec B)\vec B$, the energy density $E=\rho h \gamma^2 - p_{tot} +|\vec B|^2$, and
the vector $\vec e_i$ denoting the $i$-th column of the unit matrix of size $3$.
Additionally, $\rho$ is the rest-mass density, $\vec v=(v_1,v_2,v_3)^\top$ denotes
the fluid velocity vector, $\gamma=1/\sqrt{1- |{\bf v}|^2}$ is the Lorentz factor, $p_{tot}$ is the total pressure containing the gas pressure $p$ and  magnetic pressure $p_m:=\frac12 \left(\gamma^{-2} |\vec B|^2 +(\vec v \cdot \vec B)^2 \right)$,
$h = 1 + e + \frac{p}{\rho}$ represents the specific enthalpy, and $e$ is the specific internal energy. 
We consider the ideal equation of state  
$p = (\Gamma-1) \rho e$ to close the system \eqref{eq:RMHD},  
where $\Gamma$ is constant and denotes the adiabatic index.

The system \eqref{eq:RMHD} involves strong
nonlinearity, making its analytic treatment quite difficult. Numerical simulation is a primary approach to explore physical laws in RMHDs. 
The numerical study of RMHDs may 
	date back to the 1990s, to the best of our knowledge. 
	For instance, the time-dependent morphological evolution of a RMHD jet
	was computed in \cite{van1996knots}, based on a divergence-free
	formation of the Maxwell equations coupled to fluids \cite{van1991maxwell}
	with the numerical method proposed in \cite{van1995two}.  
During the past few decades, various numerical schemes have been developed for the RMHD equations, including but not limited to:  
the total variation diminishing scheme \cite{Balsara2001},
adaptive mesh methods \cite{Host:2008,HeTang2012RMHD}, 
discontinuous Galerkin methods \cite{Zanotti2015,ZhaoTang2017}, 
physical-constraints-preserving schemes \cite{WuTangM3AS}, 
entropy limited approach \cite{guercilena2017entropy}, etc.  
Systematic review of numerical RMHD schemes is beyond the scope of the present paper, and we 
	refer interested readers to the review articles \cite{font2008,Marti2015}. 
Besides the
standard difficulty in solving the nonlinear hyperbolic systems,
an additional numerical challenge for the RMHD system \eqref{eq:RMHD} comes from
the divergence-free condition \eqref{eq:DivB}, which is also involved in the non-relativistic ideal MHD system. 
Numerical preservation of \eqref{eq:DivB} 
is highly non-trivial (for $d\ge 2$) but crucial for the robustness of numerical  computations. 
Numerical experiments and analysis in the non-relativistic MHD case indicated
that violating the divergence-free condition \eqref{eq:DivB} 
may lead to numerical instability and nonphysical solutions \cite{Brackbill1980,Balsara2004,Wu2017a}. 
Various numerical techniques were proposed to reduce or control the effect of divergence error; see, e.g.,  
\cite{Evans1988,Powell1995,Toth2000,Dedner2002,Balsara2004,Li2005,Li2011,WuShu2018,WuShu2019}.

Due to the nonlinear hyperbolic nature of the RMHD equations \eqref{eq:RMHD}, solutions of \eqref{eq:RMHD} can be discontinuous with the presence of shocks or contact discontinuities. This leads to the consideration of weak solutions. However, weak solutions may not be unique. 
To select the ``physically relevant'' solution among all weak solutions, entropy conditions are usually imposed as the admissibility criterion. 
In the case of RMHD equations \eqref{eq:RMHD}, there is a natural entropy condition arising from the second law of thermodynamics which should be respected. 
It is natural to seek {\em entropy stable} numerical schemes which satisfy a discrete version of that entropy condition. Entropy stable numerical methods ensure that the entropy is conserved in smooth regions and dissipated across discontinuities. Thus, the numerics precisely follow the physics of the second law of thermodynamics and can be more robust. 
Moreover, entropy stable schemes also allow one to limit the amount of dissipation added to the schemes to guarantee the entropy stability. 
For the above reasons, developing entropy stable schemes for RMHD equations \eqref{eq:RMHD} is meaningful and highly desirable. 

Entropy stability analysis was well studied for first-order accurate schemes and scalar conservation laws 
\cite{crandall1980monotone,harten1976finite,osher1984riemann,osher1988convergence}.  
For systems of hyperbolic conservation laws, 
much attention was paid to 
 explore entropy stable 
schemes with entropy stability 
focused on single given entropy function.  
Tadmor \cite{tadmor1987numerical,tadmor2003entropy} established the framework of entropy conservative fluxes, which conserves entropy locally, and entropy stable fluxes for second-order schemes. 
Lefloch, Mercier and Rohde \cite{lefloch2002fully} proposed a procedure to construct higher-order accurate entropy conservative fluxes. 
Fjordholm, Mishra and Tadmor \cite{fjordholm2012arbitrarily} developed a methodology for constructing high-order accurate entropy stable schemes, which combine  high-order entropy conservative fluxes and suitable numerical dissipation operators based on essentially non-oscillatory (ENO) reconstruction that satisfies the sign property \cite{fjordholm2013eno}.  
On the other hand, high-order entropy stable schemes have also been constructed via the summation-by-parts (SBP) procedure  
 \cite{fisher2013high,carpenter2014entropy,gassner2013skew}. 
Entropy stable space–time discontinuous Galerkin (DG) schemes were studied in \cite{Barth1998,Barth2006,hiltebrand2014entropy}, where the exact integration is required for the proof of entropy stability. 
More recently, a framework for designing entropy stable high-order DG methods through suitable numerical quadrature was proposed in \cite{chen2017entropy}, where the SBP operators established in  \cite{fisher2013high,carpenter2014entropy,gassner2013skew} were used and also generalized to triangles. 
There are other studies that address various aspects of entropy stability, including but not limited to \cite{bouchut1996muscl,fjordholm2016sign,biswas2018low,hesthaven2019entropy}. 
As a key ingredient in designing entropy stable schemes, the construction of affordable entropy conservative fluxes has received much attention. 
Although there is a general way to construct 
entropy conservative fluxes based on path integration \cite{tadmor1987numerical,tadmor2003entropy}, 
the resulting fluxes may not have an explicit formula and can be computationally expensive. 
Explicit entropy conservative fluxes were derived for 
the Euler equations \cite{ismail2009affordable,chandrashekar2013kinetic,ranocha2018comparison}, 
shallow water equations \cite{gassner2016well}, 
special relativistic hydrodynamics without magnetic field \cite{duan2019high}, 
and ideal non-relativistic MHD equations \cite{Chandrashekar2016,winters2016affordable}, etc. 
Different from the Euler equations, the conservative non-relativistic MHD equations 
are not symmetrizable and do not admit an entropy  \cite{Godunov1972,Barth1998,Chandrashekar2016}. 
Entropy symmetrization can be achieved by a modified system with an additional source \cite{Godunov1972,Barth1998}. 
Based on the modified formulation, several entropy stable schemes were developed for 
non-relativistic MHDs; see, e.g.,  \cite{Chandrashekar2016,derigs2016novel,winters2016affordable,LiuShuZhang2017,DERIGS2018420}.%

This paper aims to present entropy analysis and
high-order accurate entropy stable 
schemes for the RMHD equations on Cartesian meshes.  
The difficulties in this work 
mainly come from 
the highly nonlinear coupling between RMHD equations \eqref{eq:RMHD}, which leads to no explicit expression of the primitive variables $(\rho,{\bf v},p) $, entropy variables and fluxes $\vec F_i$ in terms of $\vec U$. 
The effort and findings in this work include the following: 

\begin{enumerate}
	\item We show that the conservative form \eqref{eq:RMHD} of RMHD equations is not symmetrizable and thus does not admit a thermodynamic entropy pair.  
	A modified RMHD system is proposed by building the divergence-free condition \eqref{eq:DivB} into 
	the equations through adding a source term, which is proportional to $\nabla \cdot {\bf B}$. 
	The modified RMHD system is symmetrizable, possesses a convex thermodynamic entropy pair, and 
	is an analogue to 
	the non-relativistic symmetrizable MHD system proposed by Godunov \cite{Godunov1972} and Powell \cite{Powell1995}. 
	The Godunov-type entropy symmetrization procedure was also briefly described in \cite{trakhinin2001stability} and 
		was first performed in \cite{ruggeri1981convex} for RMHDs.\footnote{We thank Professor Yuri Trakhinin of Sobolev Institute of Mathematics for drawing our attention to references \cite{ruggeri1981convex,trakhinin2001stability} and for his explanations of these works which we were not aware of.}
	
	\item We derive a consistent two-point entropy conservative numerical flux for the symmetrizable RMHD equations. 
	The flux has an explicit analytical formula, is easy to compute and thus is affordable. 
	The key is to carefully select a set of parameter variables which can explicitly express 
	the entropy variables and potential fluxes in simple form. 
	Due to the presence of the source term in the symmetrizable RMHD system, the standard framework \cite{tadmor2003entropy} of the entropy conservative flux is not 
	applicable here and should be modified to take the effect of the source term into account. 
	\item We construct semi-discrete high-order accurate entropy conservative  schemes and  entropy stable  schemes for 
	symmetrizable RMHD equations on Cartesian meshes.  
	The second-order entropy conservative schemes are built on the proposed two-point entropy conservative  flux and a central difference discretization to the source term. 
	Higher-order entropy conservative schemes are constructed by suitable linear combinations \cite{lefloch2002fully} of the proposed two-point entropy conservative flux. 
	To ensure the entropy conservative property and high-order accuracy, 
	a suitable discretization of the symmetrization source term should be employed, which is a key ingredient in these high-order schemes. 
	Arbitrarily high-order accurate  
	entropy stable schemes are obtained by adding suitable dissipation terms into the 
	entropy conservative schemes.  
\end{enumerate}
	It is noticed that high-order accurate entropy stable schemes were 
	developed in a very recent work \cite{duan2019high} for the relativistic hydrodynamics (RHDs) {\em without the magnetic field}, 
	whose governing equations are sometimes also called the relativistic Euler equations. 
	To the best of our knowledge, the present paper is the first to study   
	entropy stable schemes for the RMHD equations.  
	Note that, in the RMHD system, the magnetic field, evolved by  
	 the induction equations, 
	is also nonlinearly involved in the momentum and energy equations. 
	Due to the presence of 
	magnetic field, mathematical structures  
	 of the RMHD system  
	are more complicated than the relativistic Euler system, making 
	the present derivations of entropy stable schemes  
	more difficult. 
	Different from the relativistic Euler equations whose conservative form admits 
	a thermodynamic entropy pair \cite{duan2019high}, 
	the conservative form \eqref{eq:RMHD} of the RMHD system 
	does not admit the thermodynamic entropy pair, 
	so that entropy symmetrization of the RMHD system must be derived to 
	accommodate an entropy condition on the PDE level. 
	Moreover, the symmetrization source term added in the modified RMHD system 
	also renders additional technical challenges on the numerical level, since  
	a suitable high-order discretization of that source term must be 
	devised such that the resulting numerical schemes satisfy entropy stability. 
	These make the exploration in this work significantly different from those in \cite{duan2019high} and highly nontrivial.
	When the magnetic field is zero, the proposed entropy stable 
	schemes for the RMHDs reduce to a class of entropy stable schemes for the RHDs. 
	It is also worth mentioning that our entropy conservative fluxes are derived through 
	a set of carefully selected parameter variables, which are different from those used in 
	 \cite{duan2019high} for the RHDs, rendering the expression of our resulting fluxes 
	 simpler; see Remark \ref{rem:Simple} for details.

The paper is organized as follows. After giving the entropy analysis in Sect.~\ref{sec:EntropyAnal}, we derive the explicit two-point entropy conservative fluxes in Sect.~\ref{sec:flux}. One-dimensional (1D) entropy conservative schemes and entropy stable schemes are constructed in Sect.~\ref{sec:1Dschemes}, and 
their extensions to two dimensions (2D) are presented in Sect.~\ref{sec:2Dschemes}. 
We conduct numerical tests in Sect.~\ref{sec:examples} to verify the performance of the proposed high-order accurate entropy stable schemes, before concluding the paper in Sect.~\ref{sec:conclusion}.

\section{Entropy Analysis} \label{sec:EntropyAnal}

First, let us recall the definition of an entropy function.

\begin{definition}
	A convex function ${\mathcal E} ({\bf U})$ is called an entropy for the system \eqref{eq:RMHD} if there exist entropy fluxes ${\mathcal Q}_i ({\bf U})$ such that 
	\begin{equation}\label{DefEntropyPair}
	{\mathcal Q}'_i ({\bf U}) = {\mathcal E}' ({\bf U}) {\bf F}'_i ( {\bf U} ), \qquad i=1,\dots,d,
	\end{equation}
	where the gradients ${\mathcal E}' ({\bf U})$ and ${\mathcal Q}'_i ({\bf U})$ are written as row vectors, and 
	${\bf F}'_i ( {\bf U} )$ is the Jacobian matrix. The functions $({\mathcal E},{\mathcal Q}_i)$ form an entropy pair. 
\end{definition}

If a hyperbolic system of conservation laws 
admits an entropy pair, then the smooth solutions of the system should satisfy  
\begin{equation*}
0 = {\mathcal E}' ({\bf U}) \left(  \frac{{\partial {\bf U}}}{{\partial t}} 
+ \sum_{i=1}^d \frac{\partial {\bf F}_i ( {\bf U} ) }{\partial x_i} \right) 
= {\mathcal E}' ({\bf U})  \frac{{\partial {\bf U}}}{{\partial t}} + \sum_{i=1}^d {\mathcal Q}'_i ({\bf U}) 
\frac{\partial {\bf U}} {\partial x_i} = \frac{\partial {\mathcal E}  }{\partial t} + \sum_{i=1}^d \frac{\partial {\mathcal Q}_i} {\partial x_i}.
\end{equation*}
Solutions of a nonlinear hyperbolic system can be discontinuous. 
This leads to the consideration of weak solutions, which, however, may not be unique, and 
for (non-smooth) weak solutions the above identity does not hold in general. 
The following inequality (see, e.g., \cite[page 83]{dafermos2000hyperbolic}) is usually imposed as an admissibility criterion to select the ``physically relevant'' solution among all weak solutions: 
\begin{equation}\label{EntropyIEQ}
\frac{\partial  {\mathcal E} }{\partial t} + \sum_{i=1}^d \frac{\partial {\mathcal Q}_i} {\partial x_i} \le 0,   
\end{equation}
which is interpreted in the sense of distribution and known as an entropy condition.

The existence of an entropy pair is closely related to the symmetrization of a hyperbolic system of conservation laws \cite{godlewski2013numerical}.

\begin{definition}
	The system \eqref{eq:RMHD} is said to be symmetrizable if there exists a change of variables ${\bf U} \to {\bf W}$ which symmetrizes it, that is, the equations \eqref{eq:RMHD} become 
	\begin{equation} 
	\frac{\partial {\bf U}}{ \partial {\bf W} }\frac{\partial {\bf W}} {\partial t} + \sum_{i=1}^d \frac{\partial {\bf F}_i} {\partial {\bf W}} \frac{\partial {\bf W}}{\partial x_i} = {\bf 0},
	\end{equation} 
	where the matrix $\frac{\partial {\bf U}}{ \partial {\bf W}} $ is symmetric positive definite and 
	$\frac{\partial {\bf F}_i} {\partial {\bf W}}$ is symmetric for all $i$. 
\end{definition}

\begin{lemma}\label{lem:SYM_E}
	A necessary and sufficient condition for the system \eqref{eq:RMHD} to possess a strictly convex entropy $ {\mathcal E} ({\bf U})$ is that there exists a change of dependent variables ${\bf U} \to {\bf W}$ which symmetrizes \eqref{eq:RMHD}. 
\end{lemma}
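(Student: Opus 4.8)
The plan is to run the classical Godunov--Mock symmetrization argument (see, e.g., \cite{godlewski2013numerical}) in the notation of this section, with the \emph{entropy variables} ${\bf W} := \big( {\mathcal E}'({\bf U}) \big)^{\top}$ playing the role of the symmetrizing change of dependent variables in one direction, and a pair of Legendre-type transforms producing the entropy pair in the other. Throughout I would keep the row/column conventions of the Definition fixed (${\mathcal E}'$, ${\mathcal Q}_i'$ as rows, ${\bf F}_i'$ as a Jacobian) so that all the matrix identities compose correctly.

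For the \emph{necessity} direction I would start from a strictly convex entropy ${\mathcal E}$ and set ${\bf W} = ({\mathcal E}'({\bf U}))^{\top}$. Strict convexity gives that $\frac{\partial {\bf W}}{\partial {\bf U}} = {\mathcal E}''({\bf U})$ is symmetric positive definite, hence ${\bf U}\mapsto{\bf W}$ is locally a diffeomorphism and $\frac{\partial {\bf U}}{\partial {\bf W}} = ({\mathcal E}'')^{-1}$ is symmetric positive definite. To handle the fluxes I would introduce the scalar potential fluxes $\psi_i({\bf W}) := {\bf W}^{\top}{\bf F}_i({\bf U}({\bf W})) - {\mathcal Q}_i({\bf U}({\bf W}))$. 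A short computation with the product and chain rules, together with \eqref{DefEntropyPair} rewritten as $\frac{\partial {\mathcal Q}_i}{\partial {\bf W}} = {\mathcal E}'({\bf U})\,{\bf F}_i'({\bf U})\,\frac{\partial {\bf U}}{\partial {\bf W}} = {\bf W}^{\top}\frac{\partial {\bf F}_i}{\partial {\bf W}}$, collapses $\frac{\partial \psi_i}{\partial {\bf W}}$ to ${\bf F}_i^{\top}$; hence $\frac{\partial {\bf F}_i}{\partial {\bf W}}$ is the Hessian $\psi_i''({\bf W})$ and therefore symmetric. Expressing \eqref{eq:RMHD} in terms of ${\bf W}$ by the chain rule then yields exactly the symmetric form in the Definition.

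For the \emph{sufficiency} direction I would reverse this. Given a symmetrizing ${\bf U}\to{\bf W}$, the symmetry of $\frac{\partial {\bf U}}{\partial {\bf W}}$ is precisely the integrability condition that lets the Poincar\'e lemma produce a scalar $\varphi({\bf W})$ with $\varphi'({\bf W}) = {\bf U}^{\top}$; its Hessian equals $\frac{\partial {\bf U}}{\partial {\bf W}}$, which is symmetric positive definite, so $\varphi$ is strictly convex, and the Legendre transform of $\varphi$ gives a strictly convex ${\mathcal E}({\bf U})$ with ${\mathcal E}'({\bf U}) = {\bf W}^{\top}$. Likewise the symmetry of each $\frac{\partial {\bf F}_i}{\partial {\bf W}}$ yields potentials $\psi_i({\bf W})$ with $\psi_i'({\bf W}) = {\bf F}_i^{\top}$; setting ${\mathcal Q}_i({\bf U}) := {\bf W}^{\top}{\bf F}_i - \psi_i$ and running the same chain-rule identity $\frac{\partial {\mathcal Q}_i}{\partial {\bf W}} = {\bf W}^{\top}\frac{\partial {\bf F}_i}{\partial {\bf W}}$ backwards verifies \eqref{DefEntropyPair}, so $({\mathcal E},{\mathcal Q}_i)$ is an entropy pair.

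The computations are routine; the only points that need genuine care are the appeal to the Poincar\'e lemma — which I would phrase locally, around an arbitrary admissible state, so that simple-connectedness is automatic and no global hypothesis on the state space is required — and the check that strict convexity is preserved under the Legendre transform. I expect the real ``engine'' (and thus the step most worth writing out carefully) to be the identity $\frac{\partial {\mathcal Q}_i}{\partial {\bf W}} = {\bf W}^{\top}\frac{\partial {\bf F}_i}{\partial {\bf W}}$, since both directions hinge on it; everything else is bookkeeping. The lemma is a known fact and is recorded here only so that the later sections can discuss the (non)existence of an entropy for \eqref{eq:RMHD} entirely through symmetrizability.
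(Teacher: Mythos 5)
Your argument is correct: it is the classical Godunov--Mock equivalence, with the entropy variables $\mathbf{W}=(\mathcal{E}'(\mathbf{U}))^\top$ and the potentials $\varphi,\psi_i$ obtained by Legendre transform and the Poincar\'e lemma, and the key identity $\frac{\partial \mathcal{Q}_i}{\partial \mathbf{W}}=\mathbf{W}^\top\frac{\partial \mathbf{F}_i}{\partial \mathbf{W}}$ is exactly the engine of that proof. The paper does not prove Lemma~\ref{lem:SYM_E} itself but defers to \cite{godlewski2013numerical}, and the argument given there is essentially the one you outline (indeed, the paper reuses the same potentials $\varphi$ and $\psi_i$ later in Sect.~2.2), so there is nothing substantive to compare beyond noting the agreement.
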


The proof of Lemma \ref{lem:SYM_E} can be found in \cite{godlewski2013numerical}.

\subsection{Entropy Function for RMHD Equations}

It is natural to ask whether the RMHD equations \eqref{eq:RMHD} admit an entropy pair or not.

Let us consider the thermodynamic entropy 
$
S=\ln ( p  \rho^{-\Gamma} ).
$
For smooth solutions, the RMHD equations \eqref{eq:RMHD} can be used to derive an equation for $\rho \gamma S$:
\begin{equation}\label{WKL11}
\frac{ \partial (\rho \gamma S)}{\partial t} + \nabla \cdot (\rho \gamma S {\bf v}) 
+ (\Gamma - 1) \frac{ \rho \gamma ( {\bf v} \cdot {\bf B} ) } {p}  \nabla \cdot {\bf B} = 0.
\end{equation}
Then under the divergence-free condition \eqref{eq:DivB}, the following quantities 
\begin{equation}\label{EntropyPair}
{\mathcal E} ({\bf U}) = - \frac{\rho \gamma S}{\Gamma -1}, \qquad {\mathcal Q}_i ({\bf U}) = - \frac{\rho \gamma S v_i }{\Gamma - 1}
\end{equation}
satisfy an additional conservation law, thus ${\mathcal E}$ may be an entropy function.

\begin{theorem}
	The entropy variables corresponding to the entropy function ${\mathcal E}$ are
	\begin{equation}\label{DefW}
	{\bf W} =  {\mathcal E}'({\bf U})^\top = \left( 
	\frac{\Gamma - S}{\Gamma - 1} + \frac{\rho }{p},~ \frac{\rho  \gamma }{p} {\bf v}^\top,~ 
	\frac{\rho  \gamma }{p} \Big( (1-|{\bf v}|^2) {\bf B}^\top + ({\bf v}\cdot {\bf B}) {\bf v}^\top \Big),~ -\frac{\rho \gamma}{p} \right)^\top.
	\end{equation}
\end{theorem}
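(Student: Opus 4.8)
The assertion is the differential identity ${\bf W}={\mathcal E}'({\bf U})^\top$, and the difficulty is that neither ${\mathcal E}$ nor ${\bf U}$ is available in closed form as a function of the other. The plan is to pass through the primitive variables ${\bf V}=(\rho,{\bf v}^\top,{\bf B}^\top,p)^\top$, which have the same dimension as ${\bf U}$ and in terms of which both sides are explicit: one has $D=\rho\gamma$, ${\bf m}=(\rho h\gamma^2+|{\bf B}|^2){\bf v}-({\bf v}\cdot{\bf B}){\bf B}$, $E=\rho h\gamma^2-p_{tot}+|{\bf B}|^2$ with $\rho h=\rho+\frac{\Gamma p}{\Gamma-1}$, while ${\mathcal E}=-\frac{\rho\gamma S}{\Gamma-1}$ with $S=\ln p-\Gamma\ln\rho$. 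On the physical state space $\{\rho>0,\,p>0,\,|{\bf v}|<1\}$ the map ${\bf V}\mapsto{\bf U}$ is a local diffeomorphism, so the one-forms $dU_1,\dots,dU_8$ are linearly independent; hence it suffices to prove the one-form identity $d{\mathcal E}={\bf W}\cdot d{\bf U}$ with every differential expanded in ${\bf V}$ (equivalently $\big(\partial{\bf U}/\partial{\bf V}\big)^\top{\bf W}=\big(\partial{\mathcal E}/\partial{\bf V}\big)^\top$), after which ${\mathcal E}'({\bf U})\,d{\bf U}=d{\mathcal E}={\bf W}\cdot d{\bf U}$ forces ${\mathcal E}'({\bf U})^\top={\bf W}$.

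First I would compute the target. Using $d\gamma=\gamma^3({\bf v}\cdot d{\bf v})$ one obtains
\[
d{\mathcal E}=\frac{\gamma(\Gamma-S)}{\Gamma-1}\,d\rho-\frac{\rho\gamma^3 S}{\Gamma-1}\,({\bf v}\cdot d{\bf v})-\frac{\rho\gamma}{(\Gamma-1)p}\,dp,
\]
which notably contains no $d{\bf B}$ term. Next I would split the momentum and energy into hydrodynamic and magnetic parts, ${\bf m}={\bf m}_h+{\bf m}_B$ and $E=E_h+E_B$ with ${\bf m}_h=\rho h\gamma^2{\bf v}$, $E_h=\rho h\gamma^2-p$, ${\bf m}_B=|{\bf B}|^2{\bf v}-({\bf v}\cdot{\bf B}){\bf B}$ and $E_B=\tfrac12(1+|{\bf v}|^2)|{\bf B}|^2-\tfrac12({\bf v}\cdot{\bf B})^2$ (using $p_m=\tfrac12(\gamma^{-2}|{\bf B}|^2+({\bf v}\cdot{\bf B})^2)$), and expand $dD,d{\bf m}_h,d{\bf m}_B,dE_h,dE_B$ in $d\rho,d{\bf v},d{\bf B},dp$.

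The decisive point is the cancellation of the magnetic contributions. Pairing the proposed ${\bf W}$ against $d{\bf U}$, the $d{\bf B}$-terms arise only from ${\bf W}_{\bf m}\cdot d{\bf m}_B$, $W_E\,dE_B$ and ${\bf W}_{\bf B}\cdot d{\bf B}$; a short computation shows their sum vanishes, and this is exactly what pins down ${\bf W}_{\bf B}=\frac{\rho\gamma}{p}\big(\gamma^{-2}{\bf B}^\top+({\bf v}\cdot{\bf B}){\bf v}^\top\big)$ (note $\gamma^{-2}=1-|{\bf v}|^2$). Similarly the magnetic $d{\bf v}$-terms coming from ${\bf W}_{\bf m}\cdot d{\bf m}_B$ and $W_E\,dE_B$ cancel against one another. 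What remains is the purely hydrodynamic identity $W_D\,dD+{\bf W}_{\bf m}\cdot d{\bf m}_h+W_E\,dE_h=d{\mathcal E}$, which I would verify by setting $\xi:=\rho h\gamma^2=(\rho+\frac{\Gamma p}{\Gamma-1})\gamma^2$ (so ${\bf m}_h=\xi{\bf v}$, $E_h=\xi-p$), computing $d\xi=\gamma^2 d\rho+\frac{\Gamma\gamma^2}{\Gamma-1}dp+2\xi\gamma^2({\bf v}\cdot d{\bf v})$, and matching the coefficients of $d\rho$, $dp$ and ${\bf v}\cdot d{\bf v}$ separately; after using $1-|{\bf v}|^2=\gamma^{-2}$ each of the three matches the corresponding coefficient displayed above.

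I expect the main obstacle to be organizational rather than conceptual: ${\bf m}$ and $E$ are quadratic in $({\bf v},{\bf B})$ and $h$ couples $\rho$ with $p$, so the raw expansions of $d{\bf m}$ and $dE$ are bulky. The value of the hydro/magnetic splitting and of the precise algebraic form of ${\bf W}_{\bf B}$ is that the magnetic blocks decouple and self-annihilate, reducing the whole verification to the relativistic-hydrodynamics computation (compare the entropy variables used in \cite{duan2019high}). Once $d{\mathcal E}={\bf W}\cdot d{\bf U}$ is established, linear independence of $dU_1,\dots,dU_8$ gives ${\bf W}={\mathcal E}'({\bf U})^\top$ as a function of ${\bf U}$, which is the claim.
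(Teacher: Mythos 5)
Your proposal is correct and follows essentially the same route as the paper: both reduce the claim to the identity $\big(\partial{\bf U}/\partial{\bf V}\big)^\top{\bf W}=\big(\partial{\mathcal E}/\partial{\bf V}\big)^\top$ in the primitive variables ${\bf V}=(\rho,{\bf v}^\top,{\bf B}^\top,p)^\top$ and then invoke invertibility of $\partial{\bf U}/\partial{\bf V}$ via the chain rule. Your hydrodynamic/magnetic splitting of ${\bf m}$ and $E$ is only an organizational refinement of the same verification (the paper writes out $\partial{\bf U}/\partial{\bf V}$ in full instead), and the cancellations you describe do check out.
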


\begin{proof}
	Since ${\mathcal E}$ cannot be explicitly expressed by ${\bf U}$, direct derivation of ${\mathcal E}'({\bf U})$ can be quite difficult. 
	Here we consider the following primitive variables 
	\begin{equation}\label{priV}
	{\bf V} = \left(  \rho, {\bf v}^\top, {\bf B}^\top, p \right)^\top. 
	\end{equation}
	As ${\mathcal E}$ and $\bf U$ can be explicitly formulated in terms of ${\bf V}$, it is easy to derive that 
	\begin{align} \nonumber
	&	{\mathcal E}'({\bf V}) = \frac1{\Gamma - 1} \left( \gamma( \Gamma - S ) ,~ - \rho S \gamma^3 {\bf v}^\top,~ 0,~ 0,~ 0,~ - \frac{ \rho \gamma }{ p } \right),
	\\
	\label{PUPV}
	& \frac{\partial {\bf U}}{\partial {\bf V}} = \begin{pmatrix}
	\gamma & \rho \gamma^3 {\bf v}^\top & {\bf 0}_3^\top & 0 
	\\
	\gamma^2{\bf v} & (  \rho h \gamma^2 + |{\bf B}|^2 ) {\bf I}_3 - {\bf B} {\bf B}^\top + 2\rho h \gamma^4 {\bf v} {\bf v}^\top  &  - ({\bf v}\cdot {\bf B}) {\bf I}_3 
	+ 2 {\bf v} {\bf B}^\top - {\bf B} {\bf v}^\top & \frac{ \Gamma \gamma^2  }{\Gamma - 1} {\bf v}
	\\
	{\bf 0}_3 & {\bf O}_3 & {\bf I}_3 & {\bf 0}_3 
	\\
	\gamma^2 & (2 \rho h \gamma^4 + |{\bf B}|^2) {\bf v}^\top - ({\bf v} \cdot {\bf B}) {\bf B}^\top & ( 1 + |{\bf v}|^2 ) {\bf B}^\top - ({\bf v}\cdot {\bf B}) {\bf v}^\top & \frac{\Gamma \gamma^2}{\Gamma - 1} -1 
	\end{pmatrix},
	\end{align}
	where ${\bf 0}_3=(0,0,0)^\top$, and ${\bf O}_3$ and ${\bf I}_3$ denote the zero square matrix and the identity matrix of size $3$, respectively. 
	One can verify that the vector $\bf W$ satisfies 
	${\bf W}^\top \frac{\partial {\bf U}}{\partial {\bf V}}  = {\mathcal E}'({\bf V}), $ 
	which implies 
	$
	{\bf W}^\top =  {\mathcal E}'({\bf V}) \left( \frac{\partial {\bf U}}{\partial {\bf V}} \right)^{-1} = {\mathcal E}'({\bf V})\frac{\partial {\bf V}}{\partial {\bf U}} = {\mathcal E}'({\bf U}).
	$
\end{proof}

However, the change of variable $ {\bf U} \to {\bf W} $ fails to symmetrize the RMHD equations \eqref{eq:RMHD}, and the functions $({\mathcal E}, {\mathcal Q}_i )$ defined in \eqref{EntropyPair} do not form an entropy pair for the RMHD equations \eqref{eq:RMHD}; see the following theorem.

\begin{theorem}
	For RMHD equations \eqref{eq:RMHD} and the entropy variables $\bf W$, one has 
	\begin{enumerate}
		\item the change of variable $ {\bf U} \to {\bf W} $ fails to symmetrize the RMHD equations \eqref{eq:RMHD}, i.e., the matrix $\frac{\partial {\bf F}_i }{\partial {\bf W}}$ is not symmetric in general.
		\item  the functions $({\mathcal E}, {\mathcal Q}_i )$ defined in \eqref{EntropyPair} satisfy 
		\begin{equation}\label{NotPair}
		{\mathcal Q}'_i ({\bf U}) = {\mathcal E}' ({\bf U}) {\bf F}'_i ( {\bf U} ) 
		+ \frac{\rho \gamma}{p} ( {\bf v} \cdot {\bf B} ) B_i'({\bf U}), \quad i=1,\dots,d,
		\end{equation}
		which implies that $({\mathcal E}, {\mathcal Q}_i )$ do not satisfy the condition  \eqref{DefEntropyPair}. 
	\end{enumerate}
\end{theorem}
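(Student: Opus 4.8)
The plan is to obtain both statements as consequences of the entropy identity \eqref{WKL11}, avoiding any direct symbolic manipulation of the Jacobian $\partial\mathbf{F}_i/\partial\mathbf{W}$ (which would require inverting the relation $\mathbf{W}(\mathbf{V})$ built from \eqref{DefW} and \eqref{PUPV}). Everything hinges on rewriting \eqref{WKL11} in quasilinear form and reading off a pointwise algebraic identity.

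I would start with the second claim. Dividing \eqref{WKL11} by $-(\Gamma-1)$ and inserting the definitions \eqref{EntropyPair} of $\mathcal{E}$ and $\mathcal{Q}_i$ gives, for every smooth solution,
\begin{equation*}
\frac{\partial\mathcal{E}}{\partial t}+\sum_{i=1}^{d}\frac{\partial\mathcal{Q}_i}{\partial x_i}=\frac{\rho\gamma}{p}(\mathbf{v}\cdot\mathbf{B})\sum_{i=1}^{d}\frac{\partial B_i}{\partial x_i}.
\end{equation*}
Expanding every term by the chain rule and then eliminating $\partial_t\mathbf{U}$ via $\partial_t\mathbf{U}=-\sum_i\mathbf{F}_i'(\mathbf{U})\,\partial_{x_i}\mathbf{U}$ from \eqref{eq:RMHD}, this turns into
\begin{equation*}
\sum_{i=1}^{d}\Big(\mathcal{Q}_i'(\mathbf{U})-\mathcal{E}'(\mathbf{U})\mathbf{F}_i'(\mathbf{U})-\frac{\rho\gamma}{p}(\mathbf{v}\cdot\mathbf{B})B_i'(\mathbf{U})\Big)\frac{\partial\mathbf{U}}{\partial x_i}=\mathbf{0}.
\end{equation*}
Since at any fixed point the admissible state $\mathbf{U}$ and the $d$ derivatives $\partial_{x_1}\mathbf{U},\dots,\partial_{x_d}\mathbf{U}$ may be prescribed independently (pick suitable smooth data), each parenthesized row vector must vanish, which is precisely \eqref{NotPair}. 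As the extra term $\frac{\rho\gamma}{p}(\mathbf{v}\cdot\mathbf{B})B_i'(\mathbf{U})$ is not identically zero — it is nonzero whenever $\mathbf{v}\cdot\mathbf{B}\neq0$ — the functions $(\mathcal{E},\mathcal{Q}_i)$ fail \eqref{DefEntropyPair}.

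For the first claim I would argue by contradiction using this identity. Because $\mathbf{W}=\mathcal{E}'(\mathbf{U})^\top$, the matrix $\partial\mathbf{W}/\partial\mathbf{U}=\mathcal{E}''(\mathbf{U})$ is symmetric, hence so is $\partial\mathbf{U}/\partial\mathbf{W}$, and $\partial\mathbf{F}_i/\partial\mathbf{W}=\mathbf{F}_i'(\mathbf{U})\big(\mathcal{E}''(\mathbf{U})\big)^{-1}$ is symmetric iff $\mathcal{E}''(\mathbf{U})\mathbf{F}_i'(\mathbf{U})$ is symmetric. A short index computation, in which the second-order derivatives of $\mathbf{F}_i$ cancel by symmetry, shows the latter is equivalent to the row vector $\mathbf{W}^\top\mathbf{F}_i'(\mathbf{U})=\mathcal{E}'(\mathbf{U})\mathbf{F}_i'(\mathbf{U})$ being locally a gradient, i.e.\ to the existence of a scalar $\widetilde{\mathcal{Q}}_i$ with $\widetilde{\mathcal{Q}}_i'=\mathcal{E}'\mathbf{F}_i'$. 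If such a $\widetilde{\mathcal{Q}}_i$ existed, subtracting \eqref{NotPair} would force $(\widetilde{\mathcal{Q}}_i-\mathcal{Q}_i)'=-\frac{\rho\gamma}{p}(\mathbf{v}\cdot\mathbf{B})B_i'(\mathbf{U})$ to be a gradient. But $B_i$ is a component of $\mathbf{U}$, so $B_i'(\mathbf{U})$ is a constant unit covector and the right-hand side has the form $f(\mathbf{U})\,\mathbf{e}^\top$ with $f=\frac{\rho\gamma}{p}(\mathbf{v}\cdot\mathbf{B})$ genuinely depending on variables other than $B_i$; a field of this form is not a gradient. This contradiction shows $\partial\mathbf{F}_i/\partial\mathbf{W}$ is not symmetric in general, so $\mathbf{U}\to\mathbf{W}$ does not symmetrize \eqref{eq:RMHD}.

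The genuinely delicate points are two, and both are minor once \eqref{WKL11} is available: justifying that the spatial gradients may be chosen freely at a point so that the quasilinear relation forces each coefficient to vanish, and checking that the defect $\frac{\rho\gamma}{p}(\mathbf{v}\cdot\mathbf{B})B_i'(\mathbf{U})$ is neither identically zero nor a gradient. A fully computational alternative for the first claim — forming $\partial\mathbf{F}_i/\partial\mathbf{W}=(\partial\mathbf{F}_i/\partial\mathbf{V})(\partial\mathbf{V}/\partial\mathbf{W})$ via \eqref{PUPV} and exhibiting one asymmetric off-diagonal entry — is available but considerably messier, so I would not pursue it.
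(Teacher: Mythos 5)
Your proposal is correct, but it takes a genuinely different route from the paper's. The paper proceeds by brute force in the primitive variables: for part~1 it assembles $\partial{\bf F}_1/\partial{\bf W}=(\partial{\bf F}_1/\partial{\bf V})(\partial{\bf V}/\partial{\bf W})$ and exhibits the unequal $(2,6)$ and $(6,2)$ entries, and for part~2 it computes ${\mathcal Q}'_1({\bf V})$ and ${\mathcal E}'({\bf U})\,\partial{\bf F}_1/\partial{\bf V}$ explicitly and reads off the defect term. You instead derive part~2 from the entropy balance \eqref{WKL11} by passing to quasilinear form, eliminating $\partial_t{\bf U}$ via \eqref{eq:RMHD}, and letting the spatial gradients range freely, and then deduce part~1 from \eqref{NotPair} through the integrability characterization (${\bf F}_i'({\mathcal E}'')^{-1}$ symmetric iff ${\mathcal E}'{\bf F}_i'$ is locally a gradient, since the second derivatives of ${\bf F}_i$ cancel) together with the observation that $\phi\,B_i'({\bf U})$ is not curl-free because $\phi$ is visibly not a function of $B_i$ alone; both of these steps check out. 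What your route buys is conceptual economy: it avoids essentially all of the Jacobian algebra and makes transparent that the failure of symmetrization and the failure of \eqref{DefEntropyPair} are two faces of the single defect term $\phi\,B_i'$. The costs are that (\romannumeral1) the whole argument rests on \eqref{WKL11}, which the paper asserts without proof and whose verification is a computation of essentially the same weight as the one the paper performs inside this theorem, so the work is displaced rather than removed; (\romannumeral2) the ``gradients may be prescribed freely'' step deserves one more sentence --- it suffices to note that \eqref{WKL11} is a pointwise algebraic consequence of \eqref{eq:RMHD}, so arbitrary values of $\partial_{x_i}{\bf U}$ together with $\partial_t{\bf U}=-\sum_i{\bf F}_i'\,\partial_{x_i}{\bf U}$ may be substituted without appealing to existence of smooth solutions of a system whose symmetrizability is precisely in question; and (\romannumeral3) you do not produce the explicit matrices such as $\partial{\bf V}/\partial{\bf W}$ in \eqref{PVPW}, which the paper reuses in Lemma~\ref{lem:phi} and in the proof that the modified system is symmetrizable.
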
 

\begin{proof}
	We only prove the conclusions for $i=1$, as the proofs for $2\le i \le d$ are similar. 
	Let us first show that $\frac{\partial {\bf F}_i }{\partial {\bf W}}$ is not symmetric in general. 
	Because ${\bf F}_1$ cannot be formulated explicitly in terms of $\bf W$, we calculate the Jacobian matrix ${\bf F}_1({\bf W})$ with the aid of primitive variables ${\bf V}$.  
	The Jacobian matrix  $\frac{\partial {\bf F}_1}{\partial {\bf V}}$ is computed as 
	\begin{equation} \label{PF1PV}
	\frac{\partial {\bf F}_1}{\partial {\bf V}} = 
	v_1  \frac{\partial {\bf U}}{\partial {\bf V}} + 
	\begin{pmatrix}
	0 & \rho \gamma {\bf e}_1^\top  & {\bf 0}_3^\top & 0
	\\
	{\bf 0}_3 & ( \rho h \gamma^2 + |{\bf B}|^2 ) {\bf v} {\bf e}_1^\top  +  {\bf M}_1 & {\bf M}_2 & {\bf e}_1
	\\
	{\bf 0}_3 & {\bf B} {\bf e}_1^\top - B_1 {\bf I}_3 & -{\bf v} {\bf e}_1^\top & {\bf 0}_3
	\\
	0 &    ( \rho h \gamma^2 + |{\bf B}|^2  ) {\bf e}_1^\top - B_1 {\bf B}^\top + v_1 \left( ({\bf v}\cdot {\bf B}) {\bf B} - |{\bf B}|^2 {\bf v} \right)^\top  & \bm \beta_1^\top- ({\bf v}\cdot {\bf B}) {\bf e}_1^\top 
	& v_1
	\end{pmatrix},
	\end{equation}
	with ${\bf e}_1=(1,0,0)^\top$, ${\bm \beta}_1 = v_1 (1-|{\bf v}|^2) {\bf B}^\top + v_1  ({\bf v}\cdot {\bf B}) {\bf v}^\top  - B_1 {\bf v}^\top$, and 
	\begin{align*} 
	&{\bf M}_1 = 
	{\bf e}_1 \big( ( {\bf v} \cdot {\bf B} ) {\bf B}^\top - |{\bf B}|^2 {\bf v}^\top \big) + B_1 ( 2 {\bf B} {\bf v}^\top - {\bf v} {\bf B}^\top ) - ({\bf v} \cdot {\bf B} ) ( B_1 {\bf I}_3 + {\bf B} {\bf e}_1^\top )
	\\
	& {\bf M}_2 = (1-|{\bf v}|^2) ( {\bf e}_1 {\bf B}^\top  - {\bf B} {\bf e}_1^\top ) + ( {\bf v} \cdot {\bf B} ) ( {\bf e}_1 {\bf v}^\top - {\bf v} {\bf e}_1^\top )  
	-B_1 {\bf v} {\bf v}^\top - B_1 ( 1- |{\bf v}|^2 ) {\bf I}_3.
	\end{align*}
	Since ${\bf W}$ can be explicitly expressed in terms of $\bf V$, one can derive that
	\begin{equation} 
	\frac{\partial {\bf W}}{\partial {\bf V}} =  \begin{pmatrix}
	h/p & {\bf 0}_3^\top & {\bf 0}_3^\top & - \frac{\rho}{p^2} - \frac{1}{p(\Gamma - 1)}
	\\
	\frac{\gamma}{p} {\bf v} & \frac{\rho \gamma^3}{p} {\bf M}_3   & {\bf O}_{3} & - \frac{\rho \gamma }{p^2} {\bf v} 
	\\
	\frac{\gamma}{p} ( (1-|{\bf v}|^2) {\bf B}  + ( {\bf v} \cdot {\bf B} ) {\bf v}) & \frac{\rho \gamma^3}{p} {\bf M}_4 & \frac{\rho \gamma}{p}  {\bf M}_3 & - \frac{\rho \gamma}{p^2} ( (1-|{\bf v}|^2) {\bf B}  + ( {\bf v} \cdot {\bf B} ) {\bf v}) 
	\\
	-\frac{\gamma}p & -\frac{\rho \gamma^3}p {\bf v}^\top & {\bf 0}_3^\top & \frac{\rho \gamma}{p^2}
	\end{pmatrix},
	\end{equation}	
	with
	$ {\bf M}_3 = (1-|{\bf v}|^2) {\bf I}_3 + {\bf v} {\bf v}^\top,
	{\bf M}_4 = ({\bf v}\cdot {\bf B}) {\bf v} {\bf v}^\top + \gamma^{-2} \left[ ({\bf v}\cdot {\bf B}) {\bf I}_3  + {\bf v} {\bf B}^\top - {\bf B} {\bf v}^\top \right].
	$
	Then, we obtain $\frac{\partial {\bf V}}{\partial {\bf W}}$ by the inverse of the matrix $\frac{\partial {\bf W}}{\partial {\bf V}}$, i.e.
	\begin{equation}\label{PVPW}
	\frac{\partial {\bf V}}{\partial {\bf W}} =  \begin{pmatrix}
	\rho &   \left( \rho + \frac{p}{\Gamma - 1} \right) \gamma {\bf v}^\top &  {\bf 0}_3^\top & \left( \rho + \frac{p}{\Gamma - 1} \right) \gamma
	\\
	{\bf 0}_3 & \frac{p }{\rho \gamma} {\bf I}_3 & {\bf O}_3 & \frac{p }{\rho \gamma} {\bf v}
	\\
	{\bf 0}_3 & \frac{ p \gamma }{\rho } {\bf M}_5 &  \frac{ p \gamma }{\rho }  \left( {\bf I}_3 - {\bf v} {\bf v}^\top \right) 
	& \frac{ p \gamma }{\rho }  \left(  (1+|{\bf v}|^2) {\bf B} - 2 ({\bf v}\cdot {\bf B}) {\bf v} \right) 
	\\
	p & p h \gamma {\bf v}^\top & {\bf 0}_3^\top & p h \gamma
	\end{pmatrix},
	\end{equation}
	with 	
	$
	{\bf M}_5 =  2 {\bf B} {\bf v}^\top  - ( {\bf v} \cdot {\bf B} ) {\bf I}_3 - {\bf v} \left( (1-|{\bf v}|^2) {\bf B}^\top + ({\bf v} \cdot {\bf B}) {\bf v}^\top \right).
	$	
	By the chain rule $\frac{\partial {\bf F}_1}{\partial {\bf W}} = \frac{\partial {\bf F}_1}{\partial {\bf V}} \frac{\partial {\bf V}}{\partial {\bf W}}$, we get the expression of $\frac{\partial {\bf F}_1}{\partial {\bf W}}$ and find it is not symmetric in general. For example, the $(2,6)$ element of the Jacobian matrix $\frac{\partial {\bf F}_1}{\partial {\bf W}}$ is 
	$
	\frac{p \gamma B_2}{\rho} ( 1 + v_1^2 -v_2^2 - v_3^2 ),
	$
	while the $(6,2)$ element is 
	$
	\frac{p \gamma }{\rho} \left(  B_2 (1+ v_1^2 - v_3^2) - B_1 v_1 v_2 + B_3 v_2 v_3  \right).
	$

	Next, let us prove \eqref{NotPair}. Note that 
	\begin{equation*}
	{\mathcal Q}'_i ({\bf V}) = \left(
	\frac{ (\Gamma - S) \gamma v_1 } { \Gamma - 1 },~ 
	- \frac{\rho \gamma^3 S}{\Gamma - 1} \left( (1-|{\bf v}|^2) {\bf e}_1^\top + v_1 {\bf v}^\top  \right),~ 0,~ 0,~ 0,~ -\frac{\rho \gamma v_1}{p(\Gamma - 1)}
	\right).
	\end{equation*}
	Using \eqref{DefW}, \eqref{PUPV} and \eqref{PF1PV}, one can derive that 
	\begin{equation*}
	{\mathcal E}' ({\bf U})  \frac{ \partial {\bf F}_1 }{\partial {\bf V}}=  \left(
	\frac{ (\Gamma - S) \gamma v_1 } { \Gamma - 1 },~ 
	- \frac{\rho \gamma^3 S}{\Gamma - 1} \left( (1-|{\bf v}|^2) {\bf e}_1^\top + v_1 {\bf v}^\top  \right),~ -\frac{\rho \gamma}{p} ( {\bf v} \cdot {\bf B} ) ,~ 0,~ 0,~ -\frac{\rho \gamma v_1}{p(\Gamma - 1)}
	\right).
	\end{equation*} 
	It follows that 
	$$
	{\mathcal Q}'_1 ({\bf U}) - {\mathcal E}' ({\bf U}) {\bf F}'_1 ( {\bf U} ) 
	- \frac{\rho \gamma}{p} ( {\bf v} \cdot {\bf B} ) B_1'({\bf U}) 
	= \big( {\mathcal Q}'_1 ({\bf V}) - {\mathcal E}' ({\bf U}) {\bf F}'_1 ( {\bf V} ) 
	- \frac{\rho \gamma}{p} ( {\bf v} \cdot {\bf B} ) B_1'({\bf V}) \big) \frac{\partial {\bf V}}{\partial {\bf U}} =  {\bf 0}. 
	$$ 
	The proof for $i=1$ is complete. Similarly one can prove the conclusions for $2\le i \le d$. 
\end{proof}

\subsection{Modified RMHD Equations and Entropy Symmetrization}

To address the above issue, we propose a modified RMHD system
\begin{equation}\label{ModRMHD}
\frac{{\partial {\bf U}}}{{\partial t}} 
+ \sum_{i=1}^d \frac{\partial {\bf F}_i ( {\bf U} ) }{\partial x_i}   = - {\bf S} ( {\bf U} )  \nabla \cdot {\bf B}  , 
\end{equation}
where 
\begin{equation}\label{eq:source}
{\bf S} ( {\bf U} ) := \left( 0,~  ( 1-|{\bf v}|^2 ) {\bf B}^\top  + ({\bf v} \cdot {\bf B}) {\bf v}^\top,~ {\bf v}^\top,~ {\bf v} \cdot {\bf B} \right)^\top. 
\end{equation}
The system \eqref{ModRMHD} can be considered as the relativistic extension of the  Godunov--Powell system \cite{Godunov1972,Powell1995} in the ideal non-relativistic MHDs. 
The right-hand side term of \eqref{ModRMHD}  is proportional 
to $\nabla \cdot {\bf B}$. 
This means, at the continuous level, the modified form \eqref{ModRMHD} and conservative form \eqref{eq:RMHD} are equivalent 
under the condition \eqref{eq:DivB}. 
However, the ``source term'' ${\bf S} ( {\bf U} )  \nabla \cdot {\bf B}$
modifies the character of the RMHD equations, 
making 
the system \eqref{ModRMHD} symmetrizable and admit the convex entropy pair $({\mathcal E}, {\mathcal Q}_i )$, as shown below.

\begin{lemma} \label{lem:phi}
	Let 
	$
	\phi := \frac{\rho \gamma}{p} ( {\bf v} \cdot {\bf B} ).
	$ 
	In terms of the entropy variables ${\bf W}$ in \eqref{DefW},  $\phi ( {\bf W} )$ is a homogeneous function of degree one, i.e., 
	\begin{equation}\label{Phi_Hom}
	\phi'( {\bf W} ) \cdot {\bf W} = \phi( {\bf W} ).
	\end{equation}
	In addition, the gradient of  $\phi ( {\bf W} )$ with respect to $\bf W$ equals ${\bf S} ^\top $, i.e., 
	\begin{equation}\label{Gradphi}
	{\bf S} ^\top = \phi'( {\bf W} ).
	\end{equation}
\end{lemma}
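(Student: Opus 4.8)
The plan is to reduce everything to a single compact formula for $\phi$ in terms of the components of ${\bf W}$. Split the entropy vector as ${\bf W}=\big(W_1,\,{\bf W}_{\bf m}^\top,\,{\bf W}_{\bf B}^\top,\,W_8\big)^\top$ according to the block structure ${\bf U}=(D,{\bf m}^\top,{\bf B}^\top,E)^\top$, so that \eqref{DefW} gives ${\bf W}_{\bf m}=\tfrac{\rho\gamma}{p}{\bf v}$, ${\bf W}_{\bf B}=\tfrac{\rho\gamma}{p}\big((1-|{\bf v}|^2){\bf B}+({\bf v}\cdot{\bf B}){\bf v}\big)$ and $W_8=-\tfrac{\rho\gamma}{p}$. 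The first step is to establish
\begin{equation}\label{eq:phi-explicit-form}
\phi = -\frac{{\bf W}_{\bf m}\cdot{\bf W}_{\bf B}}{W_8}.
\end{equation}
This is a one-line verification: using the algebraic identity ${\bf v}\cdot\big((1-|{\bf v}|^2){\bf B}+({\bf v}\cdot{\bf B}){\bf v}\big)=(1-|{\bf v}|^2)({\bf v}\cdot{\bf B})+|{\bf v}|^2({\bf v}\cdot{\bf B})={\bf v}\cdot{\bf B}$, one gets ${\bf W}_{\bf m}\cdot{\bf W}_{\bf B}=\big(\tfrac{\rho\gamma}{p}\big)^2({\bf v}\cdot{\bf B})$, and dividing by $-W_8=\tfrac{\rho\gamma}{p}$ returns $\tfrac{\rho\gamma}{p}({\bf v}\cdot{\bf B})=\phi$.

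Granting \eqref{eq:phi-explicit-form}, the homogeneity claim \eqref{Phi_Hom} is immediate: its right-hand side is the quotient of a polynomial homogeneous of degree two in ${\bf W}$ by one homogeneous of degree one, so $\phi({\bf W})$ is homogeneous of degree one and Euler's identity yields $\phi'({\bf W})\cdot{\bf W}=\phi({\bf W})$. For \eqref{Gradphi} I would differentiate \eqref{eq:phi-explicit-form} block by block: $\partial_{W_1}\phi=0$, $\nabla_{{\bf W}_{\bf m}}\phi=-{\bf W}_{\bf B}^\top/W_8$, $\nabla_{{\bf W}_{\bf B}}\phi=-{\bf W}_{\bf m}^\top/W_8$ and $\partial_{W_8}\phi=({\bf W}_{\bf m}\cdot{\bf W}_{\bf B})/W_8^2$. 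Substituting $-1/W_8=p/(\rho\gamma)$ together with the expressions for ${\bf W}_{\bf m},{\bf W}_{\bf B}$ recorded above turns these four blocks into $0$, $(1-|{\bf v}|^2){\bf B}^\top+({\bf v}\cdot{\bf B}){\bf v}^\top$, ${\bf v}^\top$ and ${\bf v}\cdot{\bf B}$, which is exactly ${\bf S}^\top$ in \eqref{eq:source}; this proves \eqref{Gradphi}. As a consistency check one can also recover \eqref{Phi_Hom} a second way, directly from ${\bf S}^\top{\bf W}=\phi$, using the same identity ${\bf v}\cdot\big((1-|{\bf v}|^2){\bf B}+({\bf v}\cdot{\bf B}){\bf v}\big)={\bf v}\cdot{\bf B}$.

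The only genuine ingredient here is spotting the compact representation \eqref{eq:phi-explicit-form}; after that the proof is routine differentiation, so I do not expect a real obstacle. If one preferred not to guess that form, an alternative is to compute $\phi'({\bf W})=\phi'({\bf V})\,\tfrac{\partial{\bf V}}{\partial{\bf W}}$ with $\tfrac{\partial{\bf V}}{\partial{\bf W}}$ taken from \eqref{PVPW} and $\phi'({\bf V})$ formed directly from $\phi=\tfrac{\rho\gamma}{p}({\bf v}\cdot{\bf B})$ (recalling $\partial\gamma/\partial v_j=\gamma^3 v_j$); this works but requires a bulkier matrix product and is less transparent, so I would keep it only as a fallback.
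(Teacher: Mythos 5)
Your proposal is correct, and it takes a genuinely different route from the paper. The paper never writes $\phi$ explicitly as a function of $\bf W$: it computes the gradient $\phi'({\bf V})$ with respect to the primitive variables and then applies the chain rule $\phi'({\bf W})=\phi'({\bf V})\,\frac{\partial {\bf V}}{\partial {\bf W}}$ using the $8\times 8$ matrix \eqref{PVPW} already assembled for the symmetrization theorem; this yields \eqref{Gradphi} first, and \eqref{Phi_Hom} then falls out by contracting ${\bf S}^\top$ with $\bf W$ (exactly the dot-product identity you cite as a consistency check). You instead exhibit the closed form $\phi=-({\bf W}_{\bf m}\cdot{\bf W}_{\bf B})/W_8$, which I verified: ${\bf v}\cdot\big((1-|{\bf v}|^2){\bf B}+({\bf v}\cdot{\bf B}){\bf v}\big)={\bf v}\cdot{\bf B}$ gives ${\bf W}_{\bf m}\cdot{\bf W}_{\bf B}=(\rho\gamma/p)^2({\bf v}\cdot{\bf B})$ and $-W_8=\rho\gamma/p$, and your four block derivatives do reduce to $\big(0,\;(1-|{\bf v}|^2){\bf B}^\top+({\bf v}\cdot{\bf B}){\bf v}^\top,\;{\bf v}^\top,\;{\bf v}\cdot{\bf B}\big)={\bf S}^\top$ after substituting $-1/W_8=p/(\rho\gamma)$. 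Your version buys transparency: homogeneity of degree one is structurally evident (quadratic over linear), Euler's identity replaces a computation, and no matrix product is needed; the only mild point worth making explicit is that the identification of $\phi({\bf W})$ with the rational function $-({\bf W}_{\bf m}\cdot{\bf W}_{\bf B})/W_8$ is legitimate to differentiate because ${\bf V}\mapsto{\bf W}$ is a local diffeomorphism (the paper inverts $\frac{\partial{\bf W}}{\partial{\bf V}}$), so the two functions agree on an open set. The paper's version, by contrast, recycles machinery it needs anyway and does not require spotting the compact representation.
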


\begin{proof}
	Taking the gradient of $\phi$ with respect to the primitive variables ${\bf V}$ gives
	$$
	\phi'({\bf V}) = \left( \frac{ \gamma}{p} ( {\bf v} \cdot {\bf B} ),~ 
	\frac{\rho \gamma^3}{p} \left( (1-|{\bf v}|^2) {\bf B}^\top+ ( {\bf v} \cdot {\bf B} ) {\bf v}^\top
	\right),~
	\frac{\rho \gamma}{p} {\bf v}^\top,~  - \frac{\rho \gamma}{p^2} ( {\bf v} \cdot {\bf B} )  \right),
	$$
	which together with \eqref{PVPW} imply
	$$
	\phi'({\bf W}) = \phi'({\bf V}) \frac{\partial {\bf V}}{\partial {\bf W}} 
	= \left( 0,~  ( 1-|{\bf v}|^2 ) {\bf B}^\top  + ({\bf v} \cdot {\bf B}) {\bf v}^\top,~ {\bf v}^\top,~ {\bf v} \cdot {\bf B} \right) = {\bf S}^\top. 
	$$
	Thus the identity \eqref{Gradphi} holds. 
	Based on \eqref{DefW}, \eqref{eq:source} and \eqref{Gradphi}, one has 
	$
	\phi'( {\bf W} )  \cdot {\bf W}=   {\bf S }^\top \cdot {\bf W} = \frac{\rho \gamma}{p} ( {\bf v} \cdot {\bf B} ) = \phi,
	$ 
	which gives \eqref{Phi_Hom}. 
\end{proof}

\begin{theorem}
	For the entropy variables $\bf W$ in \eqref{DefW}, the change of variable $ {\bf U} \to {\bf W} $ symmetrizes the modified RMHD equations \eqref{ModRMHD}, and  the functions $({\mathcal E}, {\mathcal Q}_i )$ defined in \eqref{EntropyPair} form an entropy pair of the modified RMHD equations \eqref{ModRMHD}. 
\end{theorem}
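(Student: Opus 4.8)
The plan is to verify the two claims — that $\mathbf{U}\to\mathbf{W}$ symmetrizes \eqref{ModRMHD} and that $(\mathcal{E},\mathcal{Q}_i)$ is an entropy pair for it — by reducing both to the two structural facts already in hand: the defect identity \eqref{NotPair}, which says $\mathcal{Q}_i'(\mathbf{U}) = \mathbf{W}^\top\mathbf{F}_i'(\mathbf{U}) + \phi\,B_i'(\mathbf{U})$ with $\mathbf{W}^\top=\mathcal{E}'(\mathbf{U})$ and $\phi=\frac{\rho\gamma}{p}(\mathbf{v}\cdot\mathbf{B})$; and Lemma~\ref{lem:phi}, which gives $\mathbf{S}^\top=\phi'(\mathbf{W})$ and $\phi'(\mathbf{W})\cdot\mathbf{W}=\phi$. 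First I would put \eqref{ModRMHD} in quasilinear form: since each $B_i$ is literally a component of $\mathbf{U}$, the row vector $B_i'(\mathbf{U})$ is constant and $\nabla\cdot\mathbf{B}=\sum_i B_i'(\mathbf{U})\partial_{x_i}\mathbf{U}$, so \eqref{ModRMHD} becomes $\partial_t\mathbf{U}+\sum_i\tilde{\mathbf{A}}_i(\mathbf{U})\partial_{x_i}\mathbf{U}=\mathbf{0}$ with $\tilde{\mathbf{A}}_i:=\mathbf{F}_i'(\mathbf{U})+\mathbf{S}(\mathbf{U})B_i'(\mathbf{U})$. ``Symmetrization by $\mathbf{U}\to\mathbf{W}$'' then means $\partial\mathbf{U}/\partial\mathbf{W}$ is symmetric positive definite and $\tilde{\mathbf{A}}_i\,\partial\mathbf{U}/\partial\mathbf{W}$ is symmetric for each $i$, while the entropy pair condition is $\mathcal{E}'(\mathbf{U})\tilde{\mathbf{A}}_i=\mathcal{Q}_i'(\mathbf{U})$.

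For the entropy pair I would left-multiply the quasilinear form by $\mathbf{W}^\top=\mathcal{E}'(\mathbf{U})$. The time term becomes $\partial_t\mathcal{E}$; for the convective terms I substitute $\mathbf{W}^\top\mathbf{F}_i'(\mathbf{U})=\mathcal{Q}_i'(\mathbf{U})-\phi\,B_i'(\mathbf{U})$ from \eqref{NotPair}, which turns $\sum_i\mathbf{W}^\top\mathbf{F}_i'(\mathbf{U})\partial_{x_i}\mathbf{U}$ into $\sum_i\partial_{x_i}\mathcal{Q}_i-\phi\,\nabla\cdot\mathbf{B}$; for the source part I use $\mathbf{W}^\top\mathbf{S}=\mathbf{S}^\top\mathbf{W}=\phi'(\mathbf{W})\cdot\mathbf{W}=\phi$ from Lemma~\ref{lem:phi}, so the source contributes $+\phi\,\nabla\cdot\mathbf{B}$. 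The two $\phi\,\nabla\cdot\mathbf{B}$ terms cancel, leaving $\partial_t\mathcal{E}+\sum_i\partial_{x_i}\mathcal{Q}_i=0$ for smooth solutions; equivalently $\mathbf{W}^\top\tilde{\mathbf{A}}_i=\mathcal{Q}_i'(\mathbf{U})$. The entropy inequality \eqref{EntropyIEQ} for weak (vanishing-viscosity) solutions then follows from the convexity of $\mathcal{E}$ in the usual way. Incidentally, this cancellation is exactly where the sign ``$-\mathbf{S}\,\nabla\cdot\mathbf{B}$'' in \eqref{ModRMHD} matters.

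For the symmetrization, symmetry of $\partial\mathbf{U}/\partial\mathbf{W}$ is automatic ($\partial\mathbf{W}/\partial\mathbf{U}=\mathcal{E}''(\mathbf{U})$ since $\mathbf{W}=\mathcal{E}'(\mathbf{U})^\top$), and positive definiteness is precisely strict convexity of $\mathcal{E}$, which I would check from the explicit factorization $\partial\mathbf{U}/\partial\mathbf{W}=(\partial\mathbf{U}/\partial\mathbf{V})(\partial\mathbf{V}/\partial\mathbf{W})$ using \eqref{PUPV} and \eqref{PVPW}. For the symmetry of $\tilde{\mathbf{A}}_i\,\partial\mathbf{U}/\partial\mathbf{W}=\partial\mathbf{F}_i/\partial\mathbf{W}+\mathbf{S}\,\partial B_i/\partial\mathbf{W}$, I would introduce the flux potential $\psi_i(\mathbf{W}):=\mathbf{W}^\top\mathbf{F}_i(\mathbf{U}(\mathbf{W}))-\mathcal{Q}_i(\mathbf{U}(\mathbf{W}))$. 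Differentiating and again using \eqref{NotPair} (in the form $\nabla_\mathbf{W}\mathcal{Q}_i=\mathbf{W}^\top\partial\mathbf{F}_i/\partial\mathbf{W}+\phi\,\partial B_i/\partial\mathbf{W}$) gives $\nabla_\mathbf{W}\psi_i=\mathbf{F}_i^\top-\phi\,\partial B_i/\partial\mathbf{W}$. Since $\psi_i$ is a genuine scalar function of $\mathbf{W}$, its Hessian is symmetric; computing that Hessian and using $\nabla_\mathbf{W}\phi=\mathbf{S}^\top$ from \eqref{Gradphi} (the symmetric term $\phi\,\nabla_\mathbf{W}^2 B_i$ drops out), one reads off $\partial\mathbf{F}_i/\partial\mathbf{W}-(\partial\mathbf{F}_i/\partial\mathbf{W})^\top=\mathbf{b}_i\mathbf{S}^\top-\mathbf{S}\mathbf{b}_i^\top$, where $\mathbf{b}_i:=(\partial B_i/\partial\mathbf{W})^\top$. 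Adding back $\mathbf{S}\mathbf{b}_i^\top=\mathbf{S}\,\partial B_i/\partial\mathbf{W}$ exactly cancels this antisymmetric part, so $\tilde{\mathbf{A}}_i\,\partial\mathbf{U}/\partial\mathbf{W}$ is symmetric; combined with $\partial\mathbf{U}/\partial\mathbf{W}$ being SPD, $\mathbf{U}\to\mathbf{W}$ symmetrizes \eqref{ModRMHD}.

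The genuinely substantive step — everything else being a few lines of manipulation of \eqref{NotPair} and Lemma~\ref{lem:phi} — is the positive-definiteness of $\partial\mathbf{U}/\partial\mathbf{W}$, i.e.\ strict convexity of $\mathcal{E}$: this is the only place requiring real computation with the explicit $8 \times 8$ Jacobians (or a reduction to the known convexity in the purely hydrodynamic case together with a magnetic correction), and it is what makes $\mathbf{W}$ legitimate entropy variables rather than merely a formal symmetrizer. The secondary point needing care is the sign bookkeeping in the potential argument, ensuring that the antisymmetric part of $\partial\mathbf{F}_i/\partial\mathbf{W}$ that emerges is $\mathbf{b}_i\mathbf{S}^\top-\mathbf{S}\mathbf{b}_i^\top$ with the sign that is cancelled by the $+\mathbf{S}\,\partial B_i/\partial\mathbf{W}$ coming from the Godunov--Powell source term in \eqref{ModRMHD}.
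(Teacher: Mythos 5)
Your proof is correct and follows essentially the same route as the paper: the entropy-pair identity is verified from \eqref{NotPair} together with $\phi'({\bf W})={\bf S}^\top$ and the homogeneity $\phi'({\bf W})\cdot{\bf W}=\phi$, and the symmetrization rests on the symmetric Hessian of a scalar flux potential whose gradient identity forces the antisymmetric part of $\partial {\bf F}_i/\partial {\bf W}$ to be exactly cancelled by ${\bf S}\,\partial B_i/\partial {\bf W}$ (the paper's potential $\psi_i={\bf W}^\top{\bf F}_i-{\mathcal Q}_i+\phi B_i$ differs from yours only by the product rule applied to $\phi B_i$, yielding ${\bf F}_i=\psi_i'({\bf W})^\top-B_i\phi'({\bf W})$ in place of your ${\bf F}_i^\top=\nabla_{\bf W}\psi_i+\phi\,\partial B_i/\partial{\bf W}$). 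As you rightly flag, the positive definiteness of $\partial{\bf U}/\partial{\bf W}$ is the one step neither argument carries out in detail --- the paper likewise simply asserts the convexity of ${\mathcal E}$.
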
 

\begin{proof}
	Define 
	\begin{align} 
	&\varphi  := {\bf W}^\top {\bf U}  - {\mathcal E} 
	= \rho \gamma + \frac{\rho \gamma}{2p} \Big( (1-|{\bf v}|^2) |{\bf B}|^2 + ({\bf v}\cdot {\bf B} )^2 \Big)
	\\
	\label{potentialflux}
	& \psi_i   := {\bf W}^\top {\bf F}_i  - {\mathcal Q}_i + \phi B_i, \qquad i=1,\dots,d 
	\end{align}	
	which satisfy
	$
	\psi_i  =  \varphi v_i,~  i=1,\dots,d.
	$
	In terms of the variables ${\bf W}$, the gradients of $\varphi$ and $\psi_i$ satisfy the following identities
	\begin{equation}\label{WKL1}
	{\bf U} = \varphi'({\bf W})^\top, \qquad 
	{\bf F}_i = \psi_i'({\bf W})^\top - B_i \phi'( {\bf W} ), \quad  i=1,\dots,d.
	\end{equation}
	Substituting \eqref{WKL1} 
	into  \eqref{ModRMHD}, we can rewrite the modified RMHD equations as 
	\begin{equation}\label{eq:QuasiLinear}
	\varphi''( {\bf W} ) \frac{\partial {\bf W}  }{ \partial t } + \sum_{i=1}^d \Big( \psi_i''({\bf W}) - B_i \phi''( {\bf W} )  \Big) \frac{\partial {\bf W}}{\partial x_i} = {\bf 0}, 
	\end{equation}
	where the Hessian matrices $\varphi''( {\bf W} )$, $\psi_i''({\bf W}) $ and $\phi''( {\bf W} )$ are all symmetric. Moreover, ${\mathcal E} ({\bf U})$ is a convex function on ${\bf U} \in {\mathcal G}$ and the matrix $\varphi''( {\bf W} )$ is positive definite. 
	Hence, the change of variables ${\bf U} \to {\bf W}$ symmetrizes the modified equations \eqref{ModRMHD}. According to Lemma \ref{lem:SYM_E}, 
	the system \eqref{ModRMHD} possesses a strictly convex entropy $ {\mathcal E} ({\bf U})$. 
	
	We now show that the functions $({\mathcal E}, {\mathcal Q}_i )$ defined in \eqref{EntropyPair} form an entropy pair of the modified RMHD system \eqref{ModRMHD}.  The Jacobian matrix of the system \eqref{ModRMHD} in $x_i$-direction is given by 
	$$
	{\bf A}_i ({\bf U}) := {\bf F}'_i ( {\bf U} ) + {\bf S} ({\bf U}) B_i'( {\bf U} ), \qquad i=1,\dots,d.
	$$
	Thanks to \eqref{NotPair}, \eqref{Phi_Hom} and \eqref{Gradphi}, we have  
	\begin{align*}
	&{\mathcal Q}'_i ({\bf U}) - {\mathcal E}' ({\bf U}) {\bf A}_i ({\bf U}) 
	= {\mathcal Q}'_i ({\bf U}) - {\mathcal E}' ({\bf U}) {\bf F}'_i ( {\bf U} ) - \left( {\bf W}^\top \cdot  \phi' ({\bf W}) \right) B_i'( {\bf U} )
	\\
	&= {\mathcal Q}'_i ({\bf U}) - {\mathcal E}' ({\bf U}) {\bf F}'_i ( {\bf U} ) - \phi({\bf W}) B_i'( {\bf U} )
	= {\mathcal Q}'_i ({\bf U}) - {\mathcal E}' ({\bf U}) {\bf F}'_i ( {\bf U} ) -  \frac{\rho \gamma}{p} ( {\bf v} \cdot {\bf B} )  B_i'( {\bf U} ) = 0.
	\end{align*}
	Thus, 
	$
	{\mathcal Q}'_i ({\bf U}) = {\mathcal E}' ({\bf U}) {\bf A}_i ({\bf U}),
	$
	and the functions $({\mathcal E}, {\mathcal Q}_i )$ form an entropy pair of 
	\eqref{ModRMHD}. 
	%
\end{proof}

It is worth noting that Ruggeri and Strumia \cite{ruggeri1981convex} 
(perhaps for the first time) 
also 
found the entropy variables and 
derived the entropy symmetrization for RMHD \cite{ruggeri1981convex}. 
This symmetrization was clearly described 
by Trakhinin in \cite[Section 3]{trakhinin2001stability} and used for investigating the stability of shock waves in RMHD. 
We remark that the quasi-linear form \eqref{eq:QuasiLinear} was also obtained 
		by Anile and Pennisi \cite{anile1987mathematical} for studying 
the mathematical structure of test RMHDs. 
The main goal of this paper is 
to construct entropy-stable shock-capturing schemes, which are 
based on the modified RMHD system \eqref{ModRMHD} instead of  \eqref{eq:QuasiLinear}.   
		Another important different form of symmetrization for the RMHD system, in term of variables $(p,\gamma {\bf v}^\top, {\bf B}^\top, S )^\top$ instead of entropy variables, was derived in \cite{freistuhler2013symmetrizations} for investigating the stability of relativistic current-vortex sheets.

\begin{remark}
	We note that, in the modified RMHD system \eqref{ModRMHD}, the induction equation is given by 
	$
	\frac{\partial {\bf B}}{\partial t} + \nabla \cdot ( {\bf v} {\bf B}^\top - {\bf B} {\bf v}^\top ) + {\bf v} \nabla \cdot {\bf B} = {\bf 0}. 
	$ 
	Taking the divergence of this equation gives
	$
	\nabla \cdot \left( \frac{\partial {\bf B}}{\partial t} + \nabla \cdot ( {\bf v} {\bf B}^\top - {\bf B} {\bf v}^\top ) + {\bf v} \nabla \cdot {\bf B} \right) = \frac{\partial }{\partial t} ( \nabla \cdot {\bf B} ) 
	+ \nabla \cdot ( {\bf v}  \nabla \cdot {\bf B} ) = {\bf 0}. 
	$
	Combining the continuity equation of \eqref{ModRMHD}, it yields 
	$\frac{ \partial  }{\partial t} \left( \frac{\nabla \cdot {\bf B}}{\rho \gamma} \right) + {\bf v} \cdot \nabla \left( \frac{\nabla \cdot {\bf B}}{\rho \gamma} \right) = 0.$ 
	This implies that the quantity $\frac{\nabla \cdot {\bf B}}{\rho \gamma}$ is constant along streamlines. 
	A similar property also holds for the symmetrizable non-relativistic ideal MHD system proposed by Godunov \cite{Godunov1972} and Powell \cite{Powell1995}. 
	As first demonstrated numerically by Powell \cite{Powell1995} in his eight-wave method, 
	such a property implies that the error in divergence may be advected away by the flow, 
	and   
	 suitable discretization
	of the symmetrizable MHD system may give robust numerical methods 
	in the sense of controlled divergence error \cite{Powell1995}, entropy stability \cite{Chandrashekar2016,winters2016affordable,LiuShuZhang2017} and positivity preservation \cite{WuShu2018,WuShu2019}, etc. 
\end{remark}

	Similar to the Powell source term (cf.~\cite{Powell1995,Chandrashekar2016,winters2016affordable,LiuShuZhang2017}) for the ideal MHD system, 
the source term added in the modified RMHD system \eqref{ModRMHD} is also non-conservative but necessary to obtain a symmetrizable formulation and to accommodate an entropy condition on the PDE level. 
Hence, in order to achieve entropy stability, our schemes presented later are designed based on the modified RMHD system \eqref{ModRMHD}, which leads to additional technical challenges in discretizing the source term suitably to ensure its compatibility with the entropy stability. 
As mentioned in \cite{Chandrashekar2016,LiuShuZhang2017} for the non-relativistic MHD system, there is still a conflict between the entropy stability which requires the non-conservative source term, and the conservation property which is lost due to the source term. 
The loss of conservation property leaves the possibility that it 
may lead to incorrect resolutions for some discontinuous problems, as observed in the ideal MHD case \cite{Toth2000}.  
It will be interesting to explore whether entropy stable schemes can be constructed via the conservative formulation \eqref{eq:RMHD}.

\section{Derivation of Two-point Entropy Conservative Fluxes} \label{sec:flux}

In this section, we derive explicit two-point entropy conservative numerical fluxes, which will play an important role in constructing entropy conservative schemes and entropy stable schemes, for the RMHD equations \eqref{ModRMHD}. 
Similar to the non-relativistic MHD case \cite{Chandrashekar2016}, the standard definition \cite{tadmor2003entropy} of the entropy conservative flux is not 
applicable here and should be slightly modified, due to the presence of the term ${\bf S} ( {\bf U} )  \nabla \cdot {\bf B} $ in the symmetrizable RMHD equations \eqref{ModRMHD}. Here we adopt a definition similar to the one proposed in \cite{Chandrashekar2016} for the non-relativistic MHD equations. 

\begin{definition}
	For $i=1,2,3,$ 
	a consistent two-point numerical flux ${\bf F}_i^{\star} ( {\bf U}_L, {\bf U}_R ) $ is entropy conservative if 
	\begin{equation}\label{DefECflux}
	( {\bf W}_R - {\bf W}_L ) \cdot {\bf F}_i^{\star} ( {\bf U}_L, {\bf U}_R ) + ( \phi_R - \phi_L ) \frac{  B_{i,R} + B_{i,L}  }{2} = \psi_{i,R} - \psi_{i,L},
	\end{equation}
	where ${\bf W}$, $\phi$, $\psi_{i}$ and $B_{i}$ are the entropy variables defined in \eqref{DefW}, the function defined in Lemma \ref{lem:phi}, the potential fluxes defined in \eqref{potentialflux}, and the $x_i$ magnetic field component $B_i$, respectively. The subscripts $L$ and $R$ indicate that those quantities are corresponding to the ``left'' state ${\bf U}_L$ and the ``right'' state ${\bf U}_R$, respectively.
\end{definition}

Now, we would like to construct explicit entropy conservative fluxes ${\bf F}_i^{*} ( {\bf U}_L, {\bf U}_R ) $ satisfying the condition \eqref{DefECflux}. 
For notational convenience, we employ 
$$
\jump {a} = a_R - a_L, \qquad \dgal{ a } = ( a_R + a_L )/2
$$
to denote, respectively, the jump and the arithmetic mean of a quantity. In addition, we also need 
the logarithmic mean 
\begin{equation}\label{logmean}
\dgal{ a }^{\ln} = ({ a_R - a_L })/({ \ln a_R - \ln a_L }),
\end{equation}
which was 
first introduced in \cite{ismail2009affordable}. 
Then, one has following identities 
\begin{align}\label{WKLIQ1}
& \jump{\ln a } = \jump{a} / \dgal{a}^{\ln}, \qquad \jump{ \sqrt{a} } = \jump{a} /( 2 \dgal{\sqrt{a}} ),
\\ \label{WKLIQ2}
& \jump {ab} = \dgal{a} \jump{b} + \dgal{b} \jump{a}, \qquad \jump{a^2} = 2 \dgal{a}\jump{a},
\end{align} 
which will be frequently used in the following derivation.

Let us introduce the following set of variables 
\begin{equation}\label{z}
{\bf z} := ( \rho, {\bf u}^\top, {\bf H}^\top, \beta  )^\top,
\end{equation}
with $
{\bf u} := \gamma{\bf v},$  ${\bf H} = \gamma^{-1} {\bf B}$ and $\beta = \rho /p$.  
Define  
$$
\widehat {\bf u} := ( \dgal{u_1}, \dgal{u_2}, \dgal{u_3}  )^\top, \quad 
\widehat {\bf H} := ( \dgal{H_1}, \dgal{H_2}, \dgal{H_3}  )^\top, \quad 
\bm \mu := ( \dgal{ \beta u_1}, \dgal{ \beta u_2}, \dgal{\beta u_3} )^\top.$$  
An explicit two-point entropy conservative flux for $i=1$ is given below. 

\begin{theorem} \label{thm:ECflux}
	Let $\widehat e  := 1+  (\dgal{\beta}^{\ln})^{-1}/(\Gamma - 1)$, $
	\vartheta  
	:=  \dgal { {\bf u} \cdot {\bf H} } 
	$, 
	$\widehat \Theta := \dgal{\beta}  \left(  \dgal{\beta} + 
	\widehat {\bf u} \cdot {\bm \mu} \right) > 0 $, 
	$\Theta := \widehat \Theta \left( |\widehat { \bf u }|^2 - \dgal{\gamma}^2 \right) < 0$, $\widehat{p}_{tot}  :=   {  \dgal {\rho} } / { \dgal{\beta} }  + \dgal{p_m}$, 
	and 
	\begin{align*}
	\sigma & := 2 \dgal{\beta u_1} ( \widehat {\bf u} \cdot \widehat {\bf H} ) 
	\big(  {\bm \mu} \cdot \widehat {\bf H} - \vartheta \dgal {\beta} \big)
	- ( \dgal {\beta} + \widehat {\bf u} \cdot \bm \mu  ) \big( 
	\dgal { u_1 }  ( \dgal {\rho}  + \dgal {\rho}^{\ln} \dgal {\beta}  \widehat e )   +  | \widehat {\bf H} |^2 
	\dgal{\beta u_1}
	\big), 
	\\ 
	\begin{split}
	{\bf \Xi} 
	& := \sigma \widehat{\bf u} +  \left( \dgal{\gamma}^2 - |\widehat {\bf u}|^2 \right) \dgal{\beta u_1} 
	\left( 
	\big((   \widehat {\bf H} + \vartheta \widehat {\bf u}  ) \cdot {\bm \mu} \big) \widehat {\bf H}
	+ \vartheta \big( \vartheta \dgal {\beta} - {\bm \mu} \cdot \widehat {\bf H} \big) \widehat {\bf u}
	\right)   
	\\
	& \quad +  \dgal { B_1 }  \dgal {\beta}^2 \dgal {\gamma}
	\left( 
	2  ( \widehat {\bf u} \cdot \widehat {\bf H} )  \widehat {\bf u} 
	+  \big( \dgal{\gamma}^2 - |\widehat {\bf u}|^2 \big) 
	\big( \widehat{\bf H} - \vartheta \widehat {\bf u}  \big) 
	\right) 
	\\
	& \quad
	+  \dgal {\beta} \dgal { B_1 } 
	( \widehat {\bf u} \cdot \bm \mu +  \dgal {\beta}   ) \big( 
	\vartheta \dgal {\gamma} - \dgal {\gamma {\bf u} \cdot {\bf H} } 
	\big) \widehat {\bf u},
	\end{split}
	\\
	{\bf \Pi} 
	& := \dgal{\beta u_1} \left( \dgal{\beta} + 
	\widehat {\bf u} \cdot {\bm \mu} \right) \widehat {\bf H} 
	+ \left( \dgal{\beta u_1}  ( \vartheta \dgal{\beta} - {\bm \mu} \cdot \widehat{\bf H} ) - 
	\dgal{\gamma} \dgal{ B_1 } \dgal{\beta}^2  \right) \widehat {\bf u},
	\\
	\xi & := ( \dgal{\beta} + 
	\widehat {\bf u} \cdot {\bm \mu} ) \left( \dgal{\gamma} \dgal {\gamma {\bf u} \cdot {\bf H} } -  \vartheta |\widehat {\bf u} |^2 \right) - 2 \dgal{\beta}  \dgal{\gamma}^2 ( \widehat {\bf u} \cdot \widehat {\bf H} ).
	\end{align*}
	Then the numerical flux given by 
	\begin{equation}\label{ECflux1}
	{\bf F}_1^{\star} ( {\bf U}_L, {\bf U}_R ) = \left( \dgal {\rho}^{\ln} \dgal{u_1},~ 
	\Theta^{-1} {\bf \Xi}^\top + \widehat {p}_{tot} {\bf e}_1^\top,~ 
	\widehat \Theta^{-1} {\bf \Pi}^\top,~  \Theta^{-1} ( \sigma \dgal{\gamma} - \dgal{\beta} \dgal{B_1} \xi )
	\right)^\top 
	\end{equation}
	satisfies \eqref{DefECflux} for $i=1$.
\end{theorem}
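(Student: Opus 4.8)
The plan is to \emph{re-derive} the flux \eqref{ECflux1} by the parameter-variable technique, which simultaneously proves that it satisfies \eqref{DefECflux}; a direct substitution into \eqref{DefECflux} would be hopeless. The essential first step is to rewrite every ingredient of \eqref{DefECflux} in the parameter variables ${\bf z}=(\rho,{\bf u},{\bf H},\beta)^\top$ of \eqref{z}, using $\gamma=\sqrt{1+|{\bf u}|^2}$, ${\bf v}={\bf u}/\gamma$, ${\bf B}=\gamma{\bf H}$, $p=\rho/\beta$. Substituting these into \eqref{DefW}, into Lemma~\ref{lem:phi}, and into \eqref{potentialflux} (with $\psi_1=\varphi v_1$) one finds after simplification
\[ {\bf W}=\Big(\tfrac{\Gamma}{\Gamma-1}+\ln\rho+\tfrac{\ln\beta}{\Gamma-1}+\beta,\ \ \beta{\bf u}^\top,\ \ \beta\big({\bf H}+({\bf u}\cdot{\bf H}){\bf u}\big)^\top,\ \ -\beta\gamma\Big)^\top, \]
\[ \phi=\beta\gamma({\bf u}\cdot{\bf H}),\qquad \psi_1=\rho u_1+\tfrac{\beta u_1}{2}\big(|{\bf H}|^2+({\bf u}\cdot{\bf H})^2\big),\qquad B_1=\gamma H_1. \]
Thus in the ${\bf z}$-variables these are polynomials in $({\bf z},\gamma,\ln\rho,\ln\beta)$, the variable $\rho$ occurs in ${\bf W}$ only through $\ln\rho$ in the first slot, and the only logarithms anywhere are the $\ln\rho,\ln\beta$ there. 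This is precisely why ${\bf z}$ is a good choice.

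\textbf{Reduction to a linear system.} Next I would fix once and for all an order in which products are split apart and, using only the exact mean identities \eqref{WKLIQ1}--\eqref{WKLIQ3} together with $\jump{\gamma}=(\widehat{\bf u}\cdot\jump{{\bf u}})/\dgal{\gamma}$ --- which follows from $\jump{\gamma^2}=\jump{1+|{\bf u}|^2}$ and \eqref{WKLIQ2} --- expand both $\jump{{\bf W}}\cdot{\bf F}_1^{\star}+\jump{\phi}\dgal{B_1}$ and $\jump{\psi_1}$ into linear combinations of the eight jumps $\jump{\rho},\jump{u_1},\jump{u_2},\jump{u_3},\jump{H_1},\jump{H_2},\jump{H_3},\jump{\beta}$, with coefficients built from arithmetic means, logarithmic means and averaged products of the two states. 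Since every identity used is exact, equating these eight coefficients is \emph{sufficient} for \eqref{DefECflux}; this turns \eqref{DefECflux} into a linear system $M({\bf z}_L,{\bf z}_R)\,{\bf F}_1^{\star}={\bf b}({\bf z}_L,{\bf z}_R)$ for the eight components of ${\bf F}_1^{\star}$, whose entries are exactly the quantities packaged in the statement as $\widehat{\bf u},\widehat{\bf H},\bm\mu,\vartheta,\widehat e,\widehat p_{tot},\widehat\Theta,\Theta,\sigma,{\bf \Xi},{\bf \Pi},\xi$.

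\textbf{Solving the system.} Solving proceeds block by block. The $\jump{\rho}$--equation involves only the mass-density component of ${\bf F}_1^{\star}$ (because $\rho$ sits solely inside $\ln\rho$ in $W_1$), and it gives that component $=\dgal{\rho}^{\ln}\dgal{u_1}$ immediately. With that substituted, the three $\jump{H_m}$--equations decouple into $\big(\dgal{\beta}{\bf I}_3+\widehat{\bf u}\,\bm\mu^\top\big){\bf F}_1^{\rm mag}={\bf b}^{\rm mag}$, a rank-one perturbation of $\dgal{\beta}{\bf I}_3$ with determinant $\dgal{\beta}^2\big(\dgal{\beta}+\widehat{\bf u}\cdot\bm\mu\big)=\dgal{\beta}\,\widehat\Theta$; the Sherman--Morrison formula then yields the magnetic slots of ${\bf F}_1^{\star}$ as $\widehat\Theta^{-1}{\bf \Pi}$. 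The remaining four equations --- three momentum components and the energy component --- are coupled through the $\gamma$-dependence of $W_8=-\beta\gamma$ and the $\beta$-dependence of $W_{2,3,4}=\beta{\bf u}$; eliminating them introduces the extra scalar factor $|\widehat{\bf u}|^2-\dgal{\gamma}^2$, so $\Theta=\widehat\Theta\big(|\widehat{\bf u}|^2-\dgal{\gamma}^2\big)$ becomes the common denominator, and regrouping the averaged products into $\sigma,{\bf \Xi},\xi$ and the abbreviations $\widehat p_{tot},\widehat e$ reproduces the momentum, magnetic and energy slots of \eqref{ECflux1} exactly. Because every step is an exact identity, the flux so obtained satisfies \eqref{DefECflux}. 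A consistency check completes the argument: letting ${\bf z}_L={\bf z}_R$ collapses all means ($\dgal{a}^{\ln}\to a$, the averaged products to pointwise products), so $\widehat\Theta\to\beta^2\gamma^2$, $\Theta\to-\beta^2\gamma^2$, and \eqref{ECflux1} reduces to ${\bf F}_1({\bf U})$; this also matches the sign assertions $\widehat\Theta>0$, $\Theta<0$ in the statement, whose role is only to guarantee that \eqref{ECflux1} is well defined.

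\textbf{Main obstacle.} Essentially all the work is in the last part of the elimination: collapsing the coupled momentum/energy block and simplifying its numerators into the compact forms $\sigma,{\bf \Xi},\xi$. This stays tractable only if one exploits the structure --- the mass equation decouples trivially, the magnetic block is a Sherman--Morrison inversion controlled by $\widehat\Theta$, and the energy coupling contributes nothing more than the factor $|\widehat{\bf u}|^2-\dgal{\gamma}^2$ --- and if one systematically recognizes each averaged product as one of the pre-named symbols instead of re-expanding it. A secondary, purely bookkeeping, hazard is handling every occurrence of $\gamma,\gamma^{-1},\gamma^{-2}$ consistently through $\jump{\gamma^2}=2\widehat{\bf u}\cdot\jump{{\bf u}}$ and making sure the two logarithmic-mean terms stay confined to the $\jump{\rho}$-- and $\jump{\beta}$--equations.
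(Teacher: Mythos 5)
Your proposal is correct and follows essentially the same route as the paper: express ${\bf W}$, $\phi$, $\psi_1$ in the parameter variables ${\bf z}=(\rho,{\bf u},{\bf H},\beta)^\top$, expand all jumps via \eqref{WKLIQ1}--\eqref{WKLIQ3} to reduce \eqref{DefECflux} to the linear system ${\bf M}{\bf F}_1^{\star}=\bm\varsigma$ (your block structure, including the rank-one magnetic block $\dgal{\beta}{\bf I}_3+\widehat{\bf u}\bm\mu^\top$ and the factor $|\widehat{\bf u}|^2-\dgal{\gamma}^2$ from the momentum--energy coupling, matches the paper's explicit $\bf M$ and its determinant $\dgal{\beta}^4\Theta/(\dgal{\gamma}\dgal{\rho}^{\ln})$), and finish with the consistency check ${\bf z}_L={\bf z}_R$. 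The only presentational difference is that you solve the system by elimination whereas the paper verifies that the stated flux solves it and separately shows $\det{\bf M}\neq 0$ for uniqueness.
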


\begin{proof}
	Let ${\bf F}_1^{\star} ( {\bf U}_L, {\bf U}_R ) =: ( f_1, f_2, \dots, f_8 )^\top $ and ${\bf W}=(W_1,\cdots, W_8)^\top$. Then the condition \eqref{DefECflux} for $i=1$ can be rewritten as 
	\begin{equation}\label{DefECflux2}
	\sum_{j=1}^8  \jump{ W_j }  f_j   =  \jump{ \psi_1 } - \dgal{B_1} \jump{\phi}.
	\end{equation}
	To determine the unknown components of ${\bf F}_1^{\star}$, we would like to expand each jump term in \eqref{DefECflux2} into linear combination of the jumps of certain parameter variables. This will give us a linear algebraic system of eight equations for the unknown components $( f_1, f_2, \dots, f_8 )$. There are many options to 
	choose different sets of parameter variables, which may result in different fluxes. Here we take   
	${\bf z} = ( \rho, {\bf u}^\top, {\bf H}^\top, \beta  )^\top$ as the parameter variables, to make the resulting formulation of 
	${\bf F}_1^{\star}$ simple. 
	
	In terms of the parameter variables ${\bf z} = ( \rho, {\bf u}^\top, {\bf H}^\top, \beta  )^\top$, the entropy variables $\bf W$ in \eqref{DefW} can be explicitly expressed as 
	\begin{align*}
	& W_1 = \frac{\Gamma - S}{\Gamma - 1} + \frac{\rho }{p} = \frac{\Gamma }{\Gamma - 1} + \beta + \frac1{\Gamma - 1} \ln  \beta  + \ln \rho, ~~ W_8 = - \beta \gamma = - \beta \sqrt{ 1 + |{\bf u}|^2 },
	\\
	& W_{i+1}  = \beta \gamma v_i = \beta u_i, ~~ W_{i+4}  = \beta  \gamma \left( ( 1-|{\bf v}|^2 ) B_i + ( {\bf v} \cdot {\bf B} ) v_i \right) = \beta \big( H_i + ( {\bf u} \cdot {\bf H} ) u_i \big), \quad  1\le i \le 3,
	\end{align*}
	and $\psi_1$ and $\phi$ can be expressed as 
	\begin{align*}
	& \psi_1 = \rho \gamma v_1 + \frac{ \beta \gamma }{2 } \Big( (1-|{\bf v}|^2) |{\bf B}|^2 + ({\bf v}\cdot {\bf B} )^2 \Big) v_1 
	= \rho u_1 + \beta u_1 \frac{  |{\bf H}|^2 + ({\bf u} \cdot {\bf H})^2 } 2,
	\\
	&\phi = \beta \gamma ( {\bf v} \cdot {\bf B} ) = \beta ({  {\bf u} \cdot {\bf H} }){ \sqrt{ 1+ |{\bf u}|^2 } }. 
	\end{align*}
	Then, using the identities \eqref{WKLIQ1}--\eqref{WKLIQ2}, we rewrite the jump terms involved in \eqref{DefECflux2} as 
	\begin{align*}
	&  \jump{ W_1 } = \jump {\beta} + \frac 1  { (\Gamma - 1 ) \dgal{\beta}^{\ln} } \jump {\beta} + \frac{ \jump{ \rho } }{ \dgal{\rho}^{\ln} } = \widehat e  \jump {\beta} + \frac{ \jump{ \rho } }{ \dgal{\rho}^{\ln} },
	\\
	&   \jump { W_{i+1} } = \dgal{\beta} \jump{ u_i } + \dgal{u_i} \jump{\beta}, \qquad 1\le i \le 3,
	\\
	&
	\jump { W_{i+4} } = \dgal{\beta} \jump{ H_i } + \dgal{H_i} \jump{\beta} + \dgal{\beta u_i} \jump { {\bf u} \cdot {\bf H} } 
	+  \vartheta   (  \dgal{\beta} \jump{ u_i } + \dgal{u_i} \jump{\beta} ), \quad 1\le i \le 3,
	\\
	& \jump{ W_8 } = -\dgal{\gamma} \jump{\beta} - ( \dgal{\beta}/\dgal{\gamma} ) \left \llbracket{ \frac{|\bf u|^2}2 } \right \rrbracket,
	\\
	& \jump{ \psi_1 } = 
	\dgal  {\rho} \jump { u_1 } + \dgal{ u_1 } \jump { \rho } +  \dgal{ p_m }   (  \dgal{\beta} \jump{ u_1 } + \dgal{u_1} \jump{\beta} ) 
	+ 
	\dgal{\beta u_1} \left(  \left \llbracket{ \frac{|\bf H|^2}2 } \right \rrbracket 
	+ \vartheta  \jump{ {\bf u} \cdot {\bf H} }  \right),
	\\
	& \jump { \phi } = \dgal{\gamma {\bf u} \cdot {\bf H}} \jump {\beta} + \dgal{\beta} 
	\left( \dgal{\gamma} \jump { {\bf u} \cdot {\bf H} } + \vartheta \dgal{\gamma}^{-1} \left \llbracket{ \frac{|\bf u|^2}2 } \right \rrbracket 
	\right),
	\end{align*}
	with 
	\begin{align}
	\jump { {\bf u} \cdot {\bf H} }  = \sum_{j=1}^3 \left( \dgal{ H_j} \jump { u_j } + \dgal{ u_j} \jump { H_j } \right),
	\\
	\left \llbracket{ \frac{|\bf u|^2}2 } \right \rrbracket  = \sum_{j=1}^3 \dgal{ u_j} \jump { u_j }, \qquad  \left \llbracket{ \frac{|\bf H|^2}2 } \right \rrbracket  = \sum_{j=1}^3 \dgal{ H_j} \jump { H_j }.
	\end{align}
	Substituting the above expressions of jumps into \eqref{DefECflux2} gives 
	\begin{equation}\label{eqJUMP1}
	\jump{ \bf z }^\top ( {\bf M} {\bf F}_1^{\star} ) = \jump{ \bf z }^\top {\bm \varsigma},
	\end{equation}
	where 
	\begin{align*}
	& {\bf M} = \begin{pmatrix}
	( \dgal{\rho}^{ \ln } )^{-1} & {\bf 0}_3^\top & {\bf 0}_3^\top & 0
	\\
	{\bf 0}_3 & \dgal{\beta} {\bf I}_3 & \vartheta \dgal{\beta} {\bf I}_3 + \widehat {\bf H} {\bm \mu}^\top & - \dgal{\beta} \dgal{\gamma}^{-1} \widehat {\bf u} 
	\\
	{\bf 0}_3 &  {\bf O}_3 & \dgal{\beta} {\bf I}_3 + \widehat {\bf u} {\bm \mu}^\top & {\bf 0}_3
	\\
	\widehat {e} & \widehat {\bf u}^\top & \widehat {\bf H}^\top + \vartheta \widehat {\bf u}^\top & - \dgal{\gamma}
	\end{pmatrix},
	\\
	& {\bm \varsigma} = \begin{pmatrix}
	\dgal{u_1}
	\\ 
	\vartheta \dgal{ \beta u_1 } \widehat {\bf H}  - \dgal{\beta}\dgal{B_1} \left( 
	\dgal{\gamma} \widehat {\bf H} + \vartheta \dgal{\gamma}^{-1} \widehat {\bf u} \right) 
	+ ( \dgal{\rho} + \dgal{\beta} \dgal{p_m} ) {\bf e}_1
	\\
	\dgal{ \beta u_1 } \widehat {\bf H} + ( \vartheta \dgal{\beta u_1} - \dgal{\beta} \dgal{\gamma} \dgal{B_1}  ) \widehat {\bf u}
	\\
	\dgal{p_m} \dgal{u_1} - \dgal{B_1} \dgal{\gamma {\bf u} \cdot {\bf H} } 
	\end{pmatrix}.
	\end{align*}
	One can verify that the numerical flux ${\bf F}_1^{\star} ( {\bf U}_L, {\bf U}_R )$ given by 
	\eqref{ECflux1} solves the linear system ${\bf M} {\bf F}_1^{\star} = \bm \varsigma$. Thus,  
	${\bf F}_1^{\star} $ satisfies \eqref{eqJUMP1}, which is equivalent to \eqref{DefECflux} for $i=1$. 
	Hence the numerical flux  ${\bf F}_1^{\star} ( {\bf U}_L, {\bf U}_R )$ is entropy conservative in the sense of \eqref{DefECflux}. 
	
	It is worth noting that 
	\begin{equation}\label{detM}
	\det ( { \bf M } )  = \frac{ \dgal{\beta}^5 } { \dgal{\gamma} \dgal{ \rho }^{ \ln }  } 
	\left(  \dgal{\beta} + 
	\widehat {\bf u} \cdot {\bm \mu} \right) 
	\left( |\widehat { \bf u }|^2 - \dgal{\gamma}^2 \right) = \frac{ \dgal{\beta}^4 } { \dgal{\gamma} \dgal{ \rho }^{ \ln }  }    \Theta <0,
	\end{equation}
	which implies ${\bf F}_1^{\star} $ given by 
	\eqref{ECflux1} is the unique solution to the linear system ${\bf M} {\bf F}_1^{\star} = \bm \varsigma$.
	
	Finally, let us verify that the numerical flux ${\bf F}_1^{\star} ( {\bf U}_L, {\bf U}_R )$ given by 
	\eqref{ECflux1} is consistent with the flux ${\bf F}_1$. If letting $ {\bf U}_L = {\bf U}_R = {\bf U} $, then one has 
	\begin{align*}
	\widehat \Theta & = \beta^2 \gamma^2, \qquad \Theta = -\beta^2 \gamma^2,\qquad 
	\widehat{p}_{tot} = p + p_m = p_{tot}, \qquad 
	\sigma  = -\beta^2 \gamma ( \rho h \gamma^2 + |{\bf B}|^2 ) v_1,
	\\
	{\bf \Xi} &= - \beta^2 \gamma^2 v_1 ( \rho h{\gamma^2}\vec v + |\vec B|^2 \vec v  ) + \beta^2 \gamma^2 (\vec v \cdot \vec B)\vec B 
	+ \beta^2 \gamma^2 B_1 \big( \gamma^{-2} \vec B + (\vec v \cdot \vec B) \vec v \big) 
	\\
	& =  \Theta  \left( v_1 \vec m -  B_1 \big( \gamma^{-2} \vec B + (\vec v \cdot \vec B) \vec v \big) \right),
	\\
	{\bf \Pi} &= \beta^2 ( 1+|{\bf u}|^2 ) u_1 {\bf H} - \beta^2 \gamma B_1 {\bf u} = \widehat \Theta ( v_1 {\bf B} - B_1 {\bf v} ),
	\\
	\xi &= - \beta \gamma^2 ( {\bf v} \cdot {\bf B} ), \qquad  
	\dgal {\rho}^{\ln} \dgal{u_1} = \rho \gamma v_1, \qquad 
	\Theta^{-1} ( \sigma \dgal{\gamma} - \dgal{\beta} \dgal{B_1} \xi ) = m_1,
	\end{align*}
	which imply ${\bf F}_1^{\star} ( {\bf U}, {\bf U}) = {\bf F}_1 ( {\bf U} ) $. Thus, the numerical flux ${\bf F}_1^{\star} ( {\bf U}_L, {\bf U}_R )$ given by 
	\eqref{ECflux1} is consistent with the flux ${\bf F}_1$.
	
	Therefore, the numerical flux ${\bf F}_1^{\star} ( {\bf U}_L, {\bf U}_R )$ is consistent and is entropy conservative in the sense of \eqref{DefECflux}. The proof is complete. 
\end{proof}

Explicit entropy conservative fluxes ${\bf F}_i^{\star } ( {\bf U}_L, {\bf U}_R ) $ for $i=2$ and $i=3$ can be constructed similarly or obtained by simply using a symmetric transformation based on the rotational invariance of the system \eqref{ModRMHD}. For example, ${\bf F}_2^{\star} ( {\bf U}_L, {\bf U}_R ) $ is given by 
\begin{equation}\label{ECflux2}
{\bf F}_2^{\star} ( {\bf U}_L, {\bf U}_R ) = \left( \dgal {\rho}^{\ln} \dgal{u_2},~ 
\Theta^{-1} \widetilde {\bf \Xi}^\top + \widehat {p}_{tot} {\bf e}_2^\top,~ 
\widehat \Theta^{-1} \widetilde {\bf \Pi}^\top,~  \Theta^{-1} ( \widetilde \sigma \dgal{\gamma} - \dgal{\beta} \dgal{B_2}  \xi )
\right)^\top,
\end{equation}
where 
\begin{align*}
\widetilde \sigma & := 2 \dgal{\beta u_2} ( \widehat {\bf u} \cdot \widehat {\bf H} ) 
\big(  {\bm \mu} \cdot \widehat {\bf H} - \vartheta \dgal {\beta} \big)
- ( \dgal {\beta} + \widehat {\bf u} \cdot \bm \mu  ) \big( 
\dgal { u_2 }  ( \dgal {\rho}+ \dgal {\rho}^{\ln} \dgal {\beta}  \widehat e )   +  | \widehat {\bf H} |^2 
\dgal{\beta u_2}
\big), 
\\ 
\begin{split}
\widetilde {\bf \Xi} 
& := \sigma \widehat{\bf u} +  \left( \dgal{\gamma}^2 - |\widehat {\bf u}|^2 \right) \dgal{\beta u_2} 
\left( 
\big((   \widehat {\bf H} + \vartheta \widehat {\bf u}  ) \cdot {\bm \mu} \big) \widehat {\bf H}
+ \vartheta \big( \vartheta \dgal {\beta} - {\bm \mu} \cdot \widehat {\bf H} \big) \widehat {\bf u}
\right)   
\\
& \quad +  \dgal { B_2 }  \dgal {\beta}^2 \dgal {\gamma}
\left( 
2  ( \widehat {\bf u} \cdot \widehat {\bf H} )  \widehat {\bf u} 
+  \big( \dgal{\gamma}^2 - |\widehat {\bf u}|^2 \big) 
\big( \widehat{\bf H} - \vartheta \widehat {\bf u}  \big) 
\right) \widehat {\bf u}
\\
& \quad
+  \dgal {\beta} \dgal { B_2 } 
( \widehat {\bf u} \cdot \bm \mu +  \dgal {\beta}   ) \big( 
\vartheta \dgal {\gamma} - \dgal {\gamma {\bf u} \cdot {\bf H} } 
\big) \widehat {\bf u},
\end{split}
\\
\widetilde {\bf \Pi} 
& := \dgal{\beta u_2} \left( \dgal{\beta} + 
\widehat {\bf u} \cdot {\bm \mu} \right) \widehat {\bf H} 
+ \left( \dgal{\beta u_2}  ( \vartheta \dgal{\beta} - {\bm \mu} \cdot \widehat{\bf H} ) - 
\dgal{\gamma} \dgal{ B_2 } \dgal{\beta}^2  \right) \widehat {\bf u}.
\end{align*}

\begin{remark}\label{rem:Simple}
	Taking ${\bf B}_L = {\bf B}_R = {\bf 0}$, we obtain a set of explicit entropy conservative fluxes for the relativistic hydrodynamic equations with zero magnetic field: 
	\begin{equation}\label{RHDec}
	{\bf F}_i^{\star} ( {\bf U}_L, {\bf U}_R )  = \Big(   \dgal {\rho}^{\ln} \dgal{u_i},~ 
	\widehat {\rho h} \dgal{u_i} \widehat {\bf u}^\top + \frac{ \dgal{\rho} }{ \dgal{\beta} } {\bf e}_i^\top,~ {\bf 0}_3^\top,~  \widehat {\rho h} \dgal{\gamma}  \dgal{u_i} \Big)^\top,
	\end{equation}
	where $i=1,2,3$, and 
	$
	\widehat {\rho h} := \frac{  \dgal{\rho}/\dgal{\beta} + \dgal{\rho}^{\ln} \widehat {e}  } { \dgal{\gamma}^2 - |\widehat{\bf u}|^2 }. 
	$ 
	It is noticed that  
	the expressions of the entropy conservative fluxes \eqref{RHDec} are simpler than those derived in \cite{duan2019high} via a different set of parameter variables $(\rho, \beta, {\bf v})$.   
	In fact, the set of parameter variables \eqref{z} we employed 
	are carefully selected from many possible candidate sets, so as to
	render the resulting fluxes in a simple form.
\end{remark}

\section{Entropy Conservative Schemes and Entropy Stable Schemes in One Dimension}\label{sec:1Dschemes}

In this section, we construct entropy conservative schemes and entropy stable schemes for the 
1D symmetrizable RMHD equations \eqref{ModRMHD}. 
To avoid confusing subscripts, we will use $x$ to denote the 1D spatial coordinate, $\bf F$ to represent the flux vector ${\bf F}_1$, and 
$\mathcal Q$ to represent the entropy flux ${\mathcal Q}_1$ in $x_1$-direction.  

For simplicity, we consider 
a uniform mesh $x_1<x_2<\cdots < x_N$ with mesh size $x_{i+1}-x_{i}=\Delta x$. 
The midpoint values are defined as $x_{i+1/2}:=(x_i+x_{i+1})/2$ and the spatial domain is partitioned into cells $I_i=(x_{i-1/2},x_{i+1/2})$. 
A semi-discrete finite difference scheme of the 1D symmetrizable RMHD equations \eqref{ModRMHD} can be written as 
\begin{equation}\label{1Dsemischeme}
\frac{\rm d}{{\rm d} t} {\bf U}_i(t) + \frac{ \widehat{\bf F}_{i+\frac12}(t) -\widehat{\bf F}_{i-\frac12}(t)  }{\Delta x} 
+ {\bf S}( {\bf U}_i(t) ) \frac{ \widehat{ B }_{1,i+\frac12}(t) -\widehat{B}_{1,i-\frac12}(t)  }{\Delta x} = {\bf 0},
\end{equation}
where ${\bf U}_i(t) \approx {\bf U}(x_i,t) $, the numerical flux $ \widehat{\bf F}_{i+\frac12}  $ is consistent with the flux ${\bf F}({\bf U})$, and 
$$({ \widehat{\bf F}_{i+\frac12}-\widehat{\bf F}_{i-\frac12} })/{\Delta x} \approx  \left.\partial_x {\bf F} \right|_{x=x_i},\quad ({ \widehat{B}_{1,i+\frac12} -\widehat{B}_{1,i-\frac12}  })/{\Delta x} \approx \left. \partial_x B_1 \right|_{x=x_i} = \left.  \nabla \cdot {\bf B} \right|_{x=x_i} .$$ 
For notational convenience, the $t$ dependence of all quantities is suppressed below.

\subsection{Entropy Conservative Schemes}

The semi-discrete scheme \eqref{1Dsemischeme} is said to be entropy conservative if its computed solutions satisfy 
a discrete entropy equality
\begin{equation}\label{DefECscheme}
\frac{ \rm d }{ {\rm d} t} {\mathcal E} ( {\bf U}_i ) + \frac{ 1 }{\Delta x} \left( \widetilde {\mathcal Q}_{i+\frac12}  -   \widetilde {\mathcal Q}_{i-\frac12} \right) =0
\end{equation}
for some numerical entropy flux $ \widetilde {\mathcal Q}_{i+\frac12} $ consistent with the entropy flux ${\mathcal Q}$.

We introduce the following notations 
$$
\jump{ a }_{i+1/2} = a_{i+1} - a_{i},  \qquad  \dgal{a}_{i+\frac12} =  ( a_i + a_{i+1} )/2 
$$
to denote the jump and the arithmetic mean of a quantity at the interface $x_{i+1/2}$. 

\subsubsection{Second-order entropy conservative scheme}


Similar to the non-relativistic case \cite{Chandrashekar2016}, a second-order accurate entropy conservative scheme is
obtained by taking $\widehat { \bf F }_{i+\frac12}  $ as the two-point entropy conservative flux and $\widehat{ B }_{1,i+\frac12} = \dgal{B_1}_{i+\frac12} $.

\begin{theorem}\label{thm:1D2nEC}
	If taking $\widehat { \bf F }_{i+\frac12}  $ as an entropy conservative numerical flux ${\bf F}_1^\star ( {\bf U}_i, {\bf U}_{i+1}  )$ satisfying \eqref{DefECflux} and $\widehat{ B }_{1,i+\frac12} = \dgal{B_1}_{i+\frac12} $, then the scheme \eqref{1Dsemischeme}, which becomes 
	\begin{equation}\label{1DEC2}
	\frac{ {\rm d } {\bf U}_i }{{\rm d} t}   = -\frac{ {\bf F}_1^\star ( {\bf U}_i, {\bf U}_{i+1}  )  - {\bf F}_1^\star ( {\bf U}_{i-1}, {\bf U}_{i}  )  }{\Delta x} 
	- {\bf S}( {\bf U}_i ) \frac{ \dgal{B_1}_{i+\frac12} - \dgal{B_1}_{i-\frac12} }{\Delta x},
	\end{equation}	
	is entropy conservative, and the corresponding numerical entropy flux is given by
	\begin{equation}\label{EC2Q}
	\widetilde {\mathcal Q}_{i+\frac12}^\star =  \dgal{ {\bf W} }_{i+\frac12} \cdot {\bf F}_1^\star ( {\bf U}_i, {\bf U}_{i+1}  ) + \dgal{ \phi }_{i+\frac12} \dgal{B_1}_{i+\frac12} - \dgal{\psi_1}_{i+\frac12},
	\end{equation}
	where ${\bf W}$, $\phi$ and $\psi_{1}$  are the entropy variables defined in \eqref{DefW}, the function defined in Lemma \ref{lem:phi}, the potential flux defined in \eqref{potentialflux}, respectively.
\end{theorem}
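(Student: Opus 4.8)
The plan is to establish the discrete entropy equality \eqref{DefECscheme} directly, by contracting the semi-discrete scheme \eqref{1DEC2} with the entropy variables ${\bf W}_i = {\mathcal E}'({\bf U}_i)^\top$ at the node $x_i$. Since ${\mathcal E}'({\bf U}_i)\frac{\rm d}{{\rm d}t}{\bf U}_i = \frac{\rm d}{{\rm d}t}{\mathcal E}({\bf U}_i)$ by the chain rule, this turns the left-hand side of \eqref{DefECscheme} into ${\bf W}_i^\top$ dotted against the spatial terms of \eqref{1DEC2}. For the source contribution I would invoke Lemma \ref{lem:phi}: combining \eqref{Gradphi} and \eqref{Phi_Hom} gives ${\bf W}_i^\top {\bf S}({\bf U}_i) = {\bf S}({\bf U}_i)^\top {\bf W}_i = \phi'({\bf W}_i)\cdot {\bf W}_i = \phi({\bf W}_i) =: \phi_i$. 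Writing ${\bf F}_{i+\frac12}^\star := {\bf F}_1^\star({\bf U}_i,{\bf U}_{i+1})$, this yields
\[
\frac{\rm d}{{\rm d}t}{\mathcal E}({\bf U}_i) = -\frac{1}{\Delta x}\Big({\bf W}_i^\top {\bf F}_{i+\frac12}^\star - {\bf W}_i^\top {\bf F}_{i-\frac12}^\star + \phi_i\big(\dgal{B_1}_{i+\frac12} - \dgal{B_1}_{i-\frac12}\big)\Big),
\]
and it remains to show that the quantity in parentheses telescopes to $\widetilde{\mathcal Q}_{i+\frac12}^\star - \widetilde{\mathcal Q}_{i-\frac12}^\star$ with $\widetilde{\mathcal Q}^\star$ as in \eqref{EC2Q}.

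I would verify this by expanding $\widetilde{\mathcal Q}_{i+\frac12}^\star - \widetilde{\mathcal Q}_{i-\frac12}^\star$ termwise using the elementary identities $\dgal{a}_{i+\frac12} = a_i + \frac12\jump{a}_{i+\frac12}$ and $\dgal{a}_{i-\frac12} = a_i - \frac12\jump{a}_{i-\frac12}$. On the $\dgal{{\bf W}}\cdot{\bf F}_1^\star$ part this produces the telescoping pair ${\bf W}_i^\top {\bf F}_{i+\frac12}^\star - {\bf W}_i^\top {\bf F}_{i-\frac12}^\star$ plus the remainder $\frac12(\jump{{\bf W}}_{i+\frac12}\cdot {\bf F}_{i+\frac12}^\star + \jump{{\bf W}}_{i-\frac12}\cdot {\bf F}_{i-\frac12}^\star)$, which by the entropy conservation identity \eqref{DefECflux} equals $\frac12(\jump{\psi_1}_{i+\frac12} + \jump{\psi_1}_{i-\frac12}) - \frac12(\jump{\phi}_{i+\frac12}\dgal{B_1}_{i+\frac12} + \jump{\phi}_{i-\frac12}\dgal{B_1}_{i-\frac12})$. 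On the $-\dgal{\psi_1}$ part the same splitting gives $-\frac12(\jump{\psi_1}_{i+\frac12} + \jump{\psi_1}_{i-\frac12})$, cancelling the $\psi_1$ remainder exactly; on the $\dgal{\phi}\dgal{B_1}$ part it gives the telescoping term $\phi_i(\dgal{B_1}_{i+\frac12} - \dgal{B_1}_{i-\frac12})$ plus $\frac12(\jump{\phi}_{i+\frac12}\dgal{B_1}_{i+\frac12} + \jump{\phi}_{i-\frac12}\dgal{B_1}_{i-\frac12})$, which cancels the leftover $\jump{\phi}\dgal{B_1}$ terms. Collecting the survivors gives precisely the parenthesis in the display above, hence \eqref{DefECscheme}.

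It then remains to check that $\widetilde{\mathcal Q}_{i+\frac12}^\star$ is consistent with the entropy flux ${\mathcal Q}$. Setting ${\bf U}_i = {\bf U}_{i+1} = {\bf U}$, every arithmetic and logarithmic mean collapses to its pointwise value and ${\bf F}_1^\star({\bf U},{\bf U}) = {\bf F}_1({\bf U})$ by the consistency established in Theorem \ref{thm:ECflux}, so $\widetilde{\mathcal Q}_{i+\frac12}^\star = {\bf W}^\top {\bf F}_1({\bf U}) + \phi B_1 - \psi_1$; by the definition \eqref{potentialflux}, $\psi_1 = {\bf W}^\top {\bf F}_1 - {\mathcal Q}_1 + \phi B_1$, so this reduces to ${\mathcal Q}_1({\bf U}) = {\mathcal Q}({\bf U})$.

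The argument is essentially careful bookkeeping; the only point where I expect genuine subtlety is the interlocking cancellation of the half-jump remainders in the middle step. The term $-\jump{\phi}\dgal{B_1}$ that the entropy conservative flux condition \eqref{DefECflux} forces into the ${\bf W}$-part must be matched exactly by the half-jump term generated when expanding $\dgal{\phi}\dgal{B_1}$, and this works precisely because the scheme discretizes the magnetic field in the source term by the arithmetic mean $\widehat{B}_{1,i+\frac12} = \dgal{B_1}_{i+\frac12}$ --- the same average that enters \eqref{DefECflux} as $\frac{B_{i,R}+B_{i,L}}{2}$. This is what singles out this particular choice of $\widehat{B}_1$ as the entropy conservative one.
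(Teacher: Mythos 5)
Your proposal is correct and follows essentially the same route as the paper's proof: contract the scheme with ${\bf W}_i$, use Lemma \ref{lem:phi} to reduce ${\bf W}_i^\top {\bf S}({\bf U}_i)$ to $\phi_i$, split nodal values into interface means plus/minus half-jumps, and invoke \eqref{DefECflux} to cancel the half-jump remainders. The only cosmetic difference is the direction of the bookkeeping (you expand $\widetilde{\mathcal Q}_{i+\frac12}^\star - \widetilde{\mathcal Q}_{i-\frac12}^\star$ down to the contracted scheme, while the paper builds the contracted scheme up to that difference), and your closing observation about why $\widehat{B}_{1,i+\frac12}=\dgal{B_1}_{i+\frac12}$ is forced by the average appearing in \eqref{DefECflux} is exactly the right point.
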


\begin{proof}
	Using \eqref{potentialflux}, one can easily verify that the above numerical entropy flux is consistent with the entropy flux $\mathcal Q$. 
	Note that the numerical flux ${\bf F}_1^\star ( {\bf U}_i, {\bf U}_{i+1}  )$ satisfies    
	\begin{equation}\label{ECflux11}
	\jump{ {\bf W} }_{i+\frac12} \cdot {\bf F}_1^\star ( {\bf U}_i, {\bf U}_{i+1}  )  + \jump{\phi}_{i+\frac12} \dgal{B_1}_{i+\frac12} = \jump { \psi_1 }_{i+\frac12}. 
	\end{equation}	
	Using \eqref{DefW}, \eqref{1DEC2}, \eqref{Gradphi}, \eqref{Phi_Hom} and \eqref{ECflux11} sequentially, we have 
	\begin{align*}
	&-\Delta x  \frac{ \rm d }{ {\rm d} t } {\mathcal E} ( {\bf U}_i ) = -\Delta x {\mathcal E}' ( {\bf U}_i) \frac{\rm d}{{\rm d} t} {\bf U}_i  )  = -\Delta x {\bf W}_i  \cdot \frac{\rm d}{{\rm d} t} {\bf U}_i
	\\
	& =  {\bf W}_i \cdot \big( {\bf F}_1^\star ( {\bf U}_i, {\bf U}_{i+1}  )  - {\bf F}_1^\star ( {\bf U}_{i-1}, {\bf U}_{i}  )  \big)
	+ {\bf W}_i \cdot {\bf S}( {\bf U}_i ) \big(  \dgal{B_1}_{i+\frac12} - \dgal{B_1}_{i-\frac12} \big)
	\\
	& =  {\bf W}_i \cdot \big(  {\bf F}_1^\star ( {\bf U}_i, {\bf U}_{i+1}  )  - {\bf F}_1^\star ( {\bf U}_{i-1}, {\bf U}_{i}  )  \big)
	+ \phi_i \big(  \dgal{B_1}_{i+\frac12} - \dgal{B_1}_{i-\frac12} \big)
	\\
	\begin{split}
	&= 
	\Big( \dgal{ {\bf W} }_{i+\frac12} - \frac12 \jump{ {\bf W} }_{ i+\frac12 } \Big) \cdot {\bf F}_1^\star ( {\bf U}_i, {\bf U}_{i+1}  ) 
	- 	\Big( \dgal{ {\bf W} }_{i-\frac12} + \frac12 \jump{ {\bf W } }_{ i-\frac12 } \Big) \cdot {\bf F}_1^\star ( {\bf U}_{i-1}, {\bf U}_{i}  ) 
	\\
	& \quad +
	\Big( \dgal{ \phi }_{i+\frac12} - \frac12 \jump{ \phi }_{ i+\frac12 } \Big) \dgal{B_1}_{i+\frac12} 
	- 	\Big( \dgal{ \phi }_{i-\frac12} + \frac12 \jump{ \phi }_{ i-\frac12 } \Big) \dgal{B_1}_{i-\frac12} 
	\end{split}
	\\
	\begin{split}
	& =  
	\dgal{ {\bf W} }_{i+\frac12}  \cdot {\bf F}_1^\star ( {\bf U}_i, {\bf U}_{i+1}  ) 
	- 	 \dgal{ {\bf W} }_{i-\frac12}  \cdot {\bf F}_1^\star ( {\bf U}_{i-1}, {\bf U}_{i}  ) 
	\\
	& \quad 
	+ 
	\dgal{ \phi }_{i+\frac12}  \dgal{B_1}_{i+\frac12} 
	- 	 \dgal{ \phi }_{i-\frac12}  \dgal{B_1}_{i-\frac12} 
	-\frac12 \left(  
	\jump { \psi_1 }_{i+\frac12}
	+ 	\jump { \psi_1 }_{i-\frac12}
	\right)
	\end{split}
	\\
	\begin{split}
	& =  
	\dgal{ {\bf W} }_{i+\frac12}  \cdot {\bf F}_1^\star ( {\bf U}_i, {\bf U}_{i+1}  ) 
	- 	 \dgal{ {\bf W} }_{i-\frac12}  \cdot {\bf F}_1^\star ( {\bf U}_{i-1}, {\bf U}_{i}  ) 
	\\
	& \quad 
	+ 
	\dgal{ \phi }_{i+\frac12}  \dgal{B_1}_{i+\frac12} 
	- 	 \dgal{ \phi }_{i-\frac12}  \dgal{B_1}_{i-\frac12} 
	- \left(  
	\dgal { \psi_1 }_{i+\frac12}
	+ 	\dgal { \psi_1 }_{i-\frac12}
	\right)	= \widetilde {\mathcal Q}_{i+\frac12}^\star - \widetilde {\mathcal Q}_{i-\frac12}^\star,
	\end{split}
	\end{align*}
	which implies the discrete entropy equality \eqref{DefECscheme} for the numerical entropy flux \eqref{EC2Q}. The proof is complete.
\end{proof}

We obtain an entropy conservative scheme \eqref{1DEC2} if the numerical flux ${\bf F}_1^\star ( {\bf U}_i, {\bf U}_{i+1})$ given by \eqref{ECflux1} is used. Other entropy conservative fluxes satisfying  \eqref{DefECflux} can also be used in \eqref{1DEC2} to obtain different entropy conservative schemes. Note that the 
second-order accuracy of the scheme \eqref{1DEC2} is only achieved on uniform mesh grids.

\subsubsection{Higher-order entropy conservative schemes}

The semi-discrete entropy conservative scheme \eqref{1DEC2} is only second-order accurate. 
By using the proposed entropy conservative flux \eqref{ECflux1} as building blocks, one can construct $2k$th-order accurate entropy conservative fluxes for any $k \in \mathbb N_+$; see \cite{lefloch2002fully}. These consist of linear combinations of second-order entropy conservative flux \eqref{ECflux1}. 
Specifically, a $2k$th-order accurate entropy conservative flux 
is defined as 
\begin{equation}\label{HighECflux}
\widetilde { \bf F }_{i+\frac12}^{2k,\star }:= 
\sum_{ r=1 }^k \alpha_{ k,r } \sum_{ s=0 }^{r-1}  {\bf F}_1^\star ( {\bf U}_{i-s}, {\bf U}_{i-s+r}  ),
\end{equation}
where the constants $\alpha_{k,r}$ satisfy 
\begin{equation}\label{alpha_condition}
\sum_{r=1}^k r \alpha_{k,r} = 1, \qquad \sum_{r=1}^k r^{2s-1} \alpha_{k,r} =0, \ \  s=2,\dots,k.
\end{equation}
The symmetrizable RMHD equations \eqref{ModRMHD} have a special source term, which should be treated carefully in constructing high-order accurate entropy conservative schemes. 
We find the key point is to accordingly approximate the spatial derivative $\partial_x B_1$ 
as 
$\frac1{\Delta x} \left({ \widetilde { B }_{1,i+\frac12}^{2k,\star } - 
	\widetilde { B }_{1,i-\frac12}^{2k,\star }  } \right) \approx \left. \partial_x B_1 \right|_{x = x_i},$ 
with $\widetilde { B }_{1,i+\frac12}^{2k,\star}$ defined as 
a linear combination of $\frac12 (B_{1,i-s} + B_{1,i-s+r})$ similar to \eqref{HighECflux}. Specifically, we set  
\begin{equation}\label{B11}
\widetilde { B }_{1,i+\frac12}^{2k,\star} :=  \sum_{ r=1 }^k \alpha_{ k,r } \sum_{ s=0 }^{r-1}   \bigg(\frac{B_{1,i-s} + B_{1,i-s+r}}2 \bigg).
\end{equation}

As an example, the fourth-order ($k=2$) version of $\widetilde { \bf F }_{i+\frac12}^{2k,\star }$ and $\widetilde { B }_{1,i+\frac12}^{2k,\star}$ is given by 
\begin{equation}\label{EC4flux}
\begin{cases}
\widetilde { \bf F }_{i+\frac12}^{4,\star } = \frac43 {\bf F}_1^\star ( {\bf U}_{i}, {\bf U}_{i+1}  ) -\frac16 \Big(
{\bf F}_1^\star ( {\bf U}_{i-1}, {\bf U}_{i+1}  ) + {\bf F}_1^\star ( {\bf U}_{i}, {\bf U}_{i+2}  )
\Big),
\\
\widetilde { B }_{1,i+\frac12}^{4,\star} = \frac23 \Big( B_{1,i} + B_{1,i+1} \Big) 
-\frac1{12} \Big( ( B_{1,i-1} + B_{1,i+1} )  + ( B_{1,i} + B_{1,i+2} )  \Big),
\end{cases}
\end{equation}
and the sixth-order ($k=3$) version is 
\begin{equation}\label{EC6flux}
\begin{cases}
\widetilde { \bf F }_{i+\frac12}^{6,\star } = \frac32 {\bf F}_1^\star ( {\bf U}_{i}, {\bf U}_{i+1}  ) -\frac3{10} \Big(
{\bf F}_1^\star ( {\bf U}_{i-1}, {\bf U}_{i+1}  ) + {\bf F}_1^\star ( {\bf U}_{i}, {\bf U}_{i+2}  )
\Big) 
\\
\qquad \quad + \frac{1}{30} \Big(
{\bf F}_1^\star ( {\bf U}_{i-2}, {\bf U}_{i+1}  ) + {\bf F}_1^\star ( {\bf U}_{i-1}, {\bf U}_{i+2} + {\bf F}_1^\star ( {\bf U}_{i}, {\bf U}_{i+3}  )
\Big),
\\
\widetilde { B }_{1,i+\frac12}^{6,\star}  = \frac34 \Big( B_{1,i} + B_{1,i+1} \Big) 
-\frac3{20} \Big( ( B_{1,i-1} + B_{1,i+1} )  + ( B_{1,i} + B_{1,i+2} )  \Big)
\\
\qquad \quad +\frac1{60} \Big( ( B_{1,i-2} + B_{1,i+1} )  + ( B_{1,i-1} + B_{1,i+2} ) 
+ ( B_{1,i} + B_{1,i+3} ) \Big).
\end{cases}
\end{equation}

If taking $\widehat { \bf F }_{i+\frac12}  = \widetilde { \bf F }_{i+\frac12}^{2k,\star }$  and $\widehat{ B }_{1,i+\frac12} = \widetilde { B }_{1,i+\frac12}^{2k,\star} $, then the scheme \eqref{1Dsemischeme}, which becomes 
\begin{equation}\label{1DEChigh}
\frac{ {\rm d } {\bf U}_i }{{\rm d} t}  = -\frac{ \widetilde { \bf F }_{i+\frac12}^{2k,\star }  - \widetilde { \bf F }_{i-\frac12}^{2k,\star }  }{\Delta x} 
- {\bf S}( {\bf U}_i ) \frac{ \widetilde { B }_{1,i+\frac12}^{2k,\star} - \widetilde { B }_{1,i-\frac12}^{2k,\star} }{\Delta x},
\end{equation}	
is entropy conservative and $2k$th-order accurate.

\begin{theorem}\label{thm:1DknEC}
	The scheme \eqref{1DEChigh} is entropy conservative, and the corresponding numerical entropy flux is given by
	\begin{equation}\label{ECkQ}
	\widetilde {\mathcal Q}_{i+\frac12}^{2k,\star} = \sum_{ r=1 }^k \alpha_{ k,r } \sum_{ s=0 }^{r-1}  \widetilde {\mathcal Q}  ( {\bf U}_{i-s}, {\bf U}_{i-s+r}  ),
	\end{equation}
	where the 
	constants $\alpha_{k,r}$ are defined in \eqref{alpha_condition}, and the function $\widetilde {\mathcal Q} $ is defined as
	\begin{equation}\label{DeftildeQ}
	\widetilde {\mathcal Q}  ( {\bf U}_L, {\bf U}_R  ) := \frac12 \left( {\bf W}_L + {\bf W}_R \right) \cdot {\bf F}_1^\star ( {\bf U}_L, {\bf U}_R  ) +   \frac{ \phi_L + \phi_R}{2}   \left( \frac{B_{1,L} + B_{1,R}}{2} \right) - \frac{ \psi_{1,L} + \psi_{1,R}}{2}.
	\end{equation}
\end{theorem}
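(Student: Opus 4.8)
The plan is to follow the Lefloch--Mercier--Rohde blueprint \cite{lefloch2002fully} and reduce the claim to a per-stencil telescoping identity, exploiting the fact that \eqref{HighECflux}, \eqref{B11} and \eqref{ECkQ} are the \emph{same} linear combination (with the same coefficients $\alpha_{k,r}$) of second-order building blocks. The crux is the following pair of identities, valid for arbitrary states ${\bf U}_L$, ${\bf U}_R$ with $\dgal{B_1}:=(B_{1,L}+B_{1,R})/2$:
\begin{align*}
{\bf W}_L \cdot {\bf F}_1^{\star}({\bf U}_L,{\bf U}_R) + \phi_L\, \dgal{B_1} &= \widetilde{\mathcal Q}({\bf U}_L,{\bf U}_R) + \psi_{1,L},\\
{\bf W}_R \cdot {\bf F}_1^{\star}({\bf U}_L,{\bf U}_R) + \phi_R\, \dgal{B_1} &= \widetilde{\mathcal Q}({\bf U}_L,{\bf U}_R) + \psi_{1,R},
\end{align*}
where $\widetilde{\mathcal Q}$ is the two-point function \eqref{DeftildeQ}. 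To establish the first, write ${\bf W}_L = \dgal{{\bf W}} - \tfrac12\jump{{\bf W}}$, $\phi_L = \dgal{\phi} - \tfrac12\jump{\phi}$, substitute into the left-hand side, and use the two-point entropy conservation relation \eqref{DefECflux} for $i=1$ to replace $\jump{{\bf W}}\cdot{\bf F}_1^{\star}+\jump{\phi}\dgal{B_1}$ by $\jump{\psi_1}$; comparison with \eqref{DeftildeQ} then yields the stated right-hand side. The second follows symmetrically using ${\bf W}_R = \dgal{{\bf W}} + \tfrac12\jump{{\bf W}}$, $\phi_R = \dgal{\phi}+\tfrac12\jump{\phi}$. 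As in the proof of Theorem \ref{thm:1D2nEC}, I will also use ${\bf W}_i\cdot{\bf S}({\bf U}_i)=\phi_i$, which is immediate from \eqref{Gradphi} and \eqref{Phi_Hom} in Lemma \ref{lem:phi}.

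With these in hand, fix $r\in\{1,\dots,k\}$ and put $\bar{\bf F}^{(r)}_{i+1/2} := \sum_{s=0}^{r-1}{\bf F}_1^{\star}({\bf U}_{i-s},{\bf U}_{i-s+r})$, and likewise $\bar B^{(r)}_{1,i+1/2}$ built from the means $\tfrac12(B_{1,i-s}+B_{1,i-s+r})$ and $\bar{\mathcal Q}^{(r)}_{i+1/2}$ built from $\widetilde{\mathcal Q}$. Reindexing by $j=i-s$ shows each of these telescopes, e.g. $\bar{\bf F}^{(r)}_{i+1/2} - \bar{\bf F}^{(r)}_{i-1/2} = {\bf F}_1^{\star}({\bf U}_i,{\bf U}_{i+r}) - {\bf F}_1^{\star}({\bf U}_{i-r},{\bf U}_i)$, and analogously for $\bar B^{(r)}_1$ and $\bar{\mathcal Q}^{(r)}$. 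Applying the first building-block identity to the pair $({\bf U}_i,{\bf U}_{i+r})$ and the second to $({\bf U}_{i-r},{\bf U}_i)$, the $\psi_{1,i}$ terms cancel, leaving
\[
{\bf W}_i\cdot\big(\bar{\bf F}^{(r)}_{i+1/2}-\bar{\bf F}^{(r)}_{i-1/2}\big) + \phi_i\big(\bar B^{(r)}_{1,i+1/2}-\bar B^{(r)}_{1,i-1/2}\big) = \bar{\mathcal Q}^{(r)}_{i+1/2}-\bar{\mathcal Q}^{(r)}_{i-1/2}.
\]
Multiplying by $\alpha_{k,r}$ and summing over $r$ gives the same identity with $\widetilde{\bf F}^{2k,\star}$, $\widetilde B^{2k,\star}_1$, $\widetilde{\mathcal Q}^{2k,\star}$ in place of the barred quantities. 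Since ${\rm d}{\mathcal E}({\bf U}_i)/{\rm d}t = {\bf W}_i\cdot{\rm d}{\bf U}_i/{\rm d}t$ and the scheme \eqref{1DEChigh} reads ${\rm d}{\bf U}_i/{\rm d}t = -\Delta x^{-1}(\widetilde{\bf F}^{2k,\star}_{i+1/2}-\widetilde{\bf F}^{2k,\star}_{i-1/2}) - {\bf S}({\bf U}_i)\Delta x^{-1}(\widetilde B^{2k,\star}_{1,i+1/2}-\widetilde B^{2k,\star}_{1,i-1/2})$, combining with ${\bf W}_i\cdot{\bf S}({\bf U}_i)=\phi_i$ produces exactly \eqref{DefECscheme} with the numerical entropy flux \eqref{ECkQ}. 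Consistency of that flux is checked by setting all arguments equal: $\widetilde{\mathcal Q}({\bf U},{\bf U}) = {\bf W}\cdot{\bf F}_1({\bf U}) + \phi B_1 - \psi_1 = {\mathcal Q}$ by \eqref{potentialflux}, hence $\widetilde{\mathcal Q}^{2k,\star}_{i+1/2} = \big(\sum_{r}r\alpha_{k,r}\big){\mathcal Q} = {\mathcal Q}$ by \eqref{alpha_condition}.

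The only genuinely delicate point is the symmetrization source term: the argument works precisely because $\widetilde B^{2k,\star}_1$ in \eqref{B11} is assembled from the arithmetic means $\tfrac12(B_{1,i-s}+B_{1,i-s+r})$ with the \emph{same} weights $\alpha_{k,r}$ as the flux \eqref{HighECflux}, so that the $\phi_i$-weighted magnetic contributions telescope in lockstep with the entropy-flux building blocks and the $\psi_{1,i}$ terms cancel; a generic high-order discretization of $\partial_x B_1$ would spoil this cancellation. Everything else is the routine Lefloch--Mercier--Rohde telescoping combined with the two-point relation \eqref{DefECflux} already established in Theorem \ref{thm:ECflux}.
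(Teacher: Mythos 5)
Your proposal is correct and follows essentially the same route as the paper: your two building-block identities are exactly the paper's computations of $\Pi_{i,i+r}=\widetilde{\mathcal Q}({\bf U}_i,{\bf U}_{i+r})+\psi_{1,i}$ and $\Pi_{i,i-r}=\widetilde{\mathcal Q}({\bf U}_{i-r},{\bf U}_{i})+\psi_{1,i}$ (derived via the same splitting ${\bf W}_i=\dgal{{\bf W}}\mp\tfrac12\jump{{\bf W}}$ and the two-point relation \eqref{DefECflux}), and the per-stencil telescoping, the cancellation of the $\psi_{1,i}$ terms, and the use of ${\bf W}_i\cdot{\bf S}({\bf U}_i)=\phi_i$ all match the paper's argument. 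Your consistency check of $\widetilde{\mathcal Q}^{2k,\star}$ is merely a more explicit version of what the paper leaves to the reader.
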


\begin{proof}
	First, one can use \eqref{alpha_condition} to verify that the numerical entropy flux $\widetilde {\mathcal Q}_{i+\frac12}^{2k,\star}$ is consistent with the entropy flux $\mathcal Q$. 
	Using \eqref{DefW}, \eqref{1DEChigh}, \eqref{Gradphi} and \eqref{Phi_Hom}, we obtain  
	\begin{align*}
	-\Delta x & \frac{ \rm d }{ {\rm d} t } {\mathcal E} ( {\bf U}_i ) = -\Delta x {\mathcal E}' ( {\bf U}_i) \frac{\rm d}{{\rm d} t} {\bf U}_i   
	=  {\bf W}_i \cdot \big( \widetilde { \bf F }_{i+\frac12}^{2k,\star }   - \widetilde { \bf F }_{i-\frac12}^{2k,\star }  \big)
	+ {\bf W}_i \cdot {\bf S}( {\bf U}_i ) \big( \widetilde { B }_{1,i+\frac12}^{2k,\star} - \widetilde { B }_{1,i-\frac12}^{2k,\star} \big)
	\\
	& =  {\bf W}_i \cdot \big( \widetilde { \bf F }_{i+\frac12}^{2k,\star }   - \widetilde { \bf F }_{i-\frac12}^{2k,\star }  \big)
	+ \phi_i \big(  \widetilde { B }_{1,i+\frac12}^{2k,\star} - \widetilde { B }_{1,i-\frac12}^{2k,\star} \big).
	\end{align*}
	It is observed that 
	\begin{align*}
	& \widetilde { \bf F }_{i+\frac12}^{2k,\star }   - \widetilde { \bf F }_{i-\frac12}^{2k,\star } 
	= \sum_{ r=1 }^k \alpha_{ k,r } \Big( {\bf F}_1^\star ( {\bf U}_{i}, {\bf U}_{i+r}  )
	- {\bf F}_1^\star ( {\bf U}_{i-r}, {\bf U}_{i}  )
	\Big),
	\\
	& \widetilde { B }_{1,i+\frac12}^{2k,\star} - \widetilde { B }_{1,i-\frac12}^{2k,\star} = \sum_{ r=1 }^k \alpha_{ k,r } \Big( \frac12 \big( B_{1,i} + B_{1,i+r}  \big) - 
	\frac12 \big( B_{1,i-r} + B_{1,i}  \big) \Big).
	\end{align*}
	Therefore, 
	\begin{equation}\label{keye13454}
	-\Delta x  \frac{ \rm d }{ {\rm d} t } {\mathcal E} ( {\bf U}_i ) =
	\sum_{ r=1 }^k \alpha_{ k,r } \big( 
	\Pi_{i,i+r} - \Pi_{i,i-r}
	\big)
	\end{equation}
	with $\Pi_{i,i+r} = {\bf W}_i \cdot {\bf F}_1^\star ( {\bf U}_{i}, {\bf U}_{i+r}  )  + \frac{\phi_i}2    \big( B_{1,i} + B_{1,i+r}  \big) $ and $\Pi_{i,i-r} = {\bf W}_i \cdot {\bf F}_1^\star ( {\bf U}_{i-r}, {\bf U}_{i}  )  + \frac{\phi_i}2    \big( B_{1,i} + B_{1,i-r}  \big) $. Note that 
	\begin{align} \nonumber
	\begin{split}
	\Pi_{i,i+r}
	& = \left( \frac{ {\bf W}_i + {\bf W}_{i+r} } 2  \right) \cdot {\bf F}_1^\star ( {\bf U}_{i}, {\bf U}_{i+r}  ) + \left( \frac{ \phi_i + \phi_{i+r} } 2\right) \left( \frac{ B_{1,i} + B_{1,i+r}  }2 \right)
	\\ \nonumber
	& \quad -\frac12 \left( \big( {\bf W}_{i+r} - {\bf W}_i \big) \cdot {\bf F}_1^\star ( {\bf U}_{i}, {\bf U}_{i+r}  ) + (\phi_{i+r} - \phi_i) \frac{ B_{1,i} + B_{1,i+r}  }2 \right)
	\end{split}
	\\ 
	& = \widetilde {\mathcal Q} ( {\bf U}_{i}, {\bf U}_{i+r}  ) + \frac{ \psi_{1,i} + \psi_{1,i+r} }2 - \frac{ \psi_{1,i+r} - \psi_{1,i}  }2 = \widetilde {\mathcal Q} ( {\bf U}_{i}, {\bf U}_{i+r}  ) + \psi_{1,i}, \label{PIipr}
	\end{align}
	where the property \eqref{DefECflux} of ${\bf F}_1^\star$ has been used in the penultimate equality sign. Similarly, 
	\begin{equation}\label{Piimr}
	\Pi_{i,i-r} = \widetilde {\mathcal Q} ( {\bf U}_{i-r}, {\bf U}_{i}  ) + \psi_{1,i}.
	\end{equation}
	Plugging \eqref{PIipr} and \eqref{Piimr} into \eqref{keye13454} gives 
	\begin{equation*} 
	-\Delta x  \frac{ \rm d }{ {\rm d} t } {\mathcal E} ( {\bf U}_i ) =
	\sum_{ r=1 }^k \alpha_{ k,r } \big( 
	\widetilde {\mathcal Q} ( {\bf U}_{i}, {\bf U}_{i+r}  ) - \widetilde {\mathcal Q} ( {\bf U}_{i-r}, {\bf U}_{i}  )
	\big) = \widetilde {\mathcal Q}_{i+\frac12}^{2k,\star} - \widetilde {\mathcal Q}_{i-\frac12}^{2k,\star},
	\end{equation*}
	which implies the discrete entropy equality \eqref{DefECscheme} for the numerical entropy flux \eqref{ECkQ}. The proof is complete. 
\end{proof}

\subsection{Entropy Stable Schemes}

Entropy is conserved only if the solutions of the RMHD equations \eqref{ModRMHD} are smooth. Entropy conservative schemes preserve the entropy and can work well in smooth regions. However, for solutions containing discontinuity where entropy is dissipated, 
entropy conservative schemes may produce oscillations; see, e.g., the numerical examples in \cite{TadmorZhong2006,FMT09,winters2016affordable} and Example \ref{example1DRiemanns} of the present paper. %
Consequently, some numerical dissipative mechanism should be added to ensure entropy stability. 

The 1D semi-discrete scheme \eqref{1Dsemischeme} is said to be entropy stable if its computed solutions satisfy 
a discrete entropy inequality
\begin{equation}\label{DefESscheme}
\frac{ \rm d }{ {\rm d} t} {\mathcal E} ( {\bf U}_i ) + \frac{ 1 }{\Delta x} \left( \widehat {\mathcal Q}_{i+\frac12}  -   \widehat {\mathcal Q}_{i-\frac12} \right) \le 0
\end{equation}
for some numerical entropy flux $ \widehat {\mathcal Q}_{i+\frac12} $ consistent with the entropy flux ${\mathcal Q}$.  

\subsubsection{First-order entropy stable scheme}

Let us add a numerical dissipation term to the entropy conservative flux ${\bf F}_1^\star$ and define 
\begin{equation}\label{ES1}
\hat{\bf F}_{i+\frac12} = {\bf F}_1^\star ( {\bf U}_i, {\bf U}_{i+1} ) - \frac12 {\bf D}_{i+\frac12} \jump{ {\bf W} }_{i+\frac12},
\end{equation}
where ${\bf F}_1^\star ( {\bf U}_i, {\bf U}_{i+1}  )$ is an entropy conservative numerical flux, for example, the one given in \eqref{ECflux1}, and ${\bf D}_{i+\frac12}$ is any symmetric positive definite matrix. 

\begin{theorem}
	The scheme \eqref{1Dsemischeme} with $\widehat{ B }_{1,i+\frac12} = \dgal{B_1}_{i+\frac12} $ and numerical flux \eqref{ES1} is entropy stable, and the corresponding numerical entropy flux is given by 
	\begin{equation}\label{ES1Q}
	\widehat {\mathcal Q}_{i+\frac12} =  \widetilde {\mathcal Q}_{i+\frac12}^\star - \frac12 \dgal{ {\bf W} }_{i+\frac12}^\top {\bf D}_{i+\frac12} 
	\jump{ {\bf W} }_{i+\frac12},
	\end{equation}
	where $\widetilde {\mathcal Q}_{i+\frac12}^\star$ is defined by \eqref{EC2Q}. 
\end{theorem}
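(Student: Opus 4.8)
The plan is to view the scheme with the modified flux \eqref{ES1} as the second-order entropy conservative scheme \eqref{1DEC2}, whose entropy balance is supplied by Theorem~\ref{thm:1D2nEC}, plus a purely dissipative correction, and then to show that this correction telescopes up to a non-positive quadratic remainder. First I would note that, since $\widehat B_{1,i+\frac12}=\dgal{B_1}_{i+\frac12}$ here is the same as in \eqref{1DEC2} and the flux \eqref{ES1} differs from ${\bf F}_1^\star({\bf U}_i,{\bf U}_{i+1})$ only by the term $-\tfrac12{\bf D}_{i+\frac12}\jump{{\bf W}}_{i+\frac12}$, the right-hand side of \eqref{1Dsemischeme} equals the right-hand side of \eqref{1DEC2} plus $\frac{1}{2\Delta x}\big({\bf D}_{i+\frac12}\jump{{\bf W}}_{i+\frac12}-{\bf D}_{i-\frac12}\jump{{\bf W}}_{i-\frac12}\big)$. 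Applying the chain rule $\frac{\rm d}{{\rm d}t}{\mathcal E}({\bf U}_i)={\bf W}_i\cdot\frac{\rm d}{{\rm d}t}{\bf U}_i$ and invoking Theorem~\ref{thm:1D2nEC} for the conservative part, I obtain
\begin{equation*}
\frac{\rm d}{{\rm d}t}{\mathcal E}({\bf U}_i) = -\frac{1}{\Delta x}\big(\widetilde{\mathcal Q}_{i+\frac12}^\star-\widetilde{\mathcal Q}_{i-\frac12}^\star\big) + \frac{1}{2\Delta x}\,{\bf W}_i\cdot\big({\bf D}_{i+\frac12}\jump{{\bf W}}_{i+\frac12}-{\bf D}_{i-\frac12}\jump{{\bf W}}_{i-\frac12}\big),
\end{equation*}
where $\widetilde{\mathcal Q}_{i+\frac12}^\star$ is the numerical entropy flux \eqref{EC2Q}.

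Next I would rewrite the dissipation term in flux-difference form. Using the two representations ${\bf W}_i=\dgal{{\bf W}}_{i+\frac12}-\tfrac12\jump{{\bf W}}_{i+\frac12}=\dgal{{\bf W}}_{i-\frac12}+\tfrac12\jump{{\bf W}}_{i-\frac12}$ on the $i+\frac12$ and $i-\frac12$ contributions respectively, the cross terms recombine into $\dgal{{\bf W}}_{i+\frac12}^\top{\bf D}_{i+\frac12}\jump{{\bf W}}_{i+\frac12}-\dgal{{\bf W}}_{i-\frac12}^\top{\bf D}_{i-\frac12}\jump{{\bf W}}_{i-\frac12}$ plus the leftover quadratic terms $-\tfrac12\jump{{\bf W}}_{i+\frac12}^\top{\bf D}_{i+\frac12}\jump{{\bf W}}_{i+\frac12}-\tfrac12\jump{{\bf W}}_{i-\frac12}^\top{\bf D}_{i-\frac12}\jump{{\bf W}}_{i-\frac12}$. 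Absorbing the first difference into the numerical entropy flux exactly as in \eqref{ES1Q}, namely $\widehat{\mathcal Q}_{i+\frac12}=\widetilde{\mathcal Q}_{i+\frac12}^\star-\tfrac12\dgal{{\bf W}}_{i+\frac12}^\top{\bf D}_{i+\frac12}\jump{{\bf W}}_{i+\frac12}$, I arrive at
\begin{equation*}
\frac{\rm d}{{\rm d}t}{\mathcal E}({\bf U}_i) + \frac{1}{\Delta x}\big(\widehat{\mathcal Q}_{i+\frac12}-\widehat{\mathcal Q}_{i-\frac12}\big) = -\frac{1}{4\Delta x}\Big(\jump{{\bf W}}_{i+\frac12}^\top{\bf D}_{i+\frac12}\jump{{\bf W}}_{i+\frac12}+\jump{{\bf W}}_{i-\frac12}^\top{\bf D}_{i-\frac12}\jump{{\bf W}}_{i-\frac12}\Big).
\end{equation*}
Since ${\bf D}_{i\pm\frac12}$ is symmetric positive definite, each quadratic form on the right is non-negative, so the right-hand side is $\le 0$; this is precisely the discrete entropy inequality \eqref{DefESscheme}. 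For consistency of $\widehat{\mathcal Q}_{i+\frac12}$ with ${\mathcal Q}$, I would simply observe that taking ${\bf U}_i={\bf U}_{i+1}$ makes $\jump{{\bf W}}_{i+\frac12}$ vanish, so $\widehat{\mathcal Q}_{i+\frac12}$ reduces to $\widetilde{\mathcal Q}_{i+\frac12}^\star$, which Theorem~\ref{thm:1D2nEC} has already shown to be consistent with ${\mathcal Q}$.

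The argument uses no new structural ingredient beyond Theorem~\ref{thm:1D2nEC} and the positive definiteness of ${\bf D}_{i+\frac12}$; the only step requiring genuine care is the bookkeeping of the two mean-plus/minus-half-jump representations of ${\bf W}_i$, so that the $\dgal{{\bf W}}$-weighted terms telescope cleanly into $\widehat{\mathcal Q}$ and the residual quadratic forms emerge with the correct overall sign. I do not anticipate a substantive obstacle.
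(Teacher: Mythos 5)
Your proposal is correct and follows essentially the same route as the paper: both treat the scheme as the entropy conservative scheme of Theorem~\ref{thm:1D2nEC} plus the dissipation term, split ${\bf W}_i$ into mean $\mp$ half-jump at each interface so the mean-weighted pieces are absorbed into $\widehat{\mathcal Q}_{i\pm\frac12}$, and bound the leftover quadratic forms by the positive definiteness of ${\bf D}_{i\pm\frac12}$. The consistency check via $\jump{{\bf W}}_{i+\frac12}\to 0$ is also the intended (if unstated) verification in the paper.
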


\begin{proof}
	First, one can easily verify that the numerical entropy flux \eqref{ES1Q} is consistent with the entropy flux $\mathcal Q$.  
	Substituting  $\widehat{ B }_{1,i+\frac12} = \dgal{B_1}_{i+\frac12} $ and numerical flux \eqref{ES1} into the scheme \eqref{1Dsemischeme} and then following the proof of Theorem \ref{thm:1D2nEC}, we obtain 
	\begin{equation*}
	\begin{aligned}
	-\Delta x  \frac{ \rm d }{ {\rm d} t } {\mathcal E} ( {\bf U}_i ) & = \widetilde {\mathcal Q}_{i+\frac12}^\star - \widetilde {\mathcal Q}_{i-\frac12}^\star - \frac12 {\bf W}_i^\top \left( 
	{\bf D}_{i+\frac12} \jump{ {\bf W} }_{i+\frac12} - {\bf D}_{i-\frac12} \jump{ {\bf W} }_{i-\frac12}
	\right)
	\\
	& = \widehat{\mathcal Q}_{i+\frac12} - \widehat {\mathcal Q}_{i-\frac12} 
	+ \frac14 \left( \jump{{\bf W}}_{i+\frac12}^\top  {\bf D}_{i+\frac12} \jump{ {\bf W} }_{i+\frac12}  + \jump{{\bf W}}_{i-\frac12}^\top  {\bf D}_{i-\frac12} \jump{ {\bf W} }_{i-\frac12} \right).
	\end{aligned}
	\end{equation*}
	Therefore,  
	\begin{multline*}
	\frac{ \rm d }{ {\rm d} t } {\mathcal E} ( {\bf U}_i ) + \frac1{\Delta x} 
	\left(  \widehat{\mathcal Q}_{i+\frac12} - \widehat {\mathcal Q}_{i-\frac12}  \right)
	= -\frac1{4\Delta x} \left( \jump{{\bf W}}_{i+\frac12}^\top  {\bf D}_{i+\frac12} \jump{ {\bf W} }_{i+\frac12}  + \jump{{\bf W}}_{i-\frac12}^\top  {\bf D}_{i-\frac12} \jump{ {\bf W} }_{i-\frac12} \right) \le 0,
	\end{multline*}
	which implies the discrete entropy inequality \eqref{DefESscheme} for the numerical entropy flux 
	\eqref{ES1Q}. The proof is complete. 
\end{proof}

In the computations, we evaluate the dissipation matrix 
${\bf D}_{i+\frac12}$ as follows.  
Let ${\bf U}_{i+\frac12}^{\tt ave}$ be an average state at $x=x_{i+\frac12}$ with the corresponding primitive variables 
\begin{equation}\label{AVEstate}
\rho_{i+\frac12}^{\tt ave} := \dgal{\rho }_{i+\frac12}^{\rm ln}, ~
{\bf v}_{i+\frac12}^{\tt ave} := \dgal{ {\bf v} }_{i+\frac12}, ~
{\bf B}_{i+\frac12}^{\tt ave} := \dgal{ {\bf B} }_{i+\frac12}, ~
{p}_{i+\frac12}^{\tt ave} :=  \rho_{i+\frac12}^{\tt ave} / \dgal{\beta }_{i+\frac12}^{\rm ln},
\end{equation}
where $\beta = \rho / p$, and $\dgal{\cdot }_{i+\frac12}^{\rm ln}$ denotes the logarithmic mean defined in \eqref{logmean}. 
Then the dissipation matrix ${\bf D}_{i+\frac12}$ is evaluated at the above average state by  
$
{\bf D}_{i+\frac12} = {\mathbf R}_{i+\frac12} |{\bf \Lambda}_{i+\frac12}| {\mathbf R}_{i+\frac12}^\top,
$
where $|{\bf \Lambda}|$ is a diagonal matrix to be specified later, 
${\mathbf R}$ is the matrix formed by the scaled (right) eigenvectors of the Jacobian matrix ${\bf A}_1 ({\bf U}) := {\bf F}'_1 ( {\bf U} ) + {\bf S} ({\bf U}) B_1'( {\bf U} )$, and it satisfies 
\begin{equation}\label{scalingR}
{\bf A}_1 = {\bf R} {\bf \Lambda} {\bf R}^{-1}, \qquad \frac{\partial {\bf U}}{\partial {\bf W}} =  {\mathbf R} {\bf R}^\top.
\end{equation}
Let $\{\lambda_\ell\}_{1\le \ell \le 8}$ be the eight eigenvalues of the Jacobian matrix ${\bf A}_1$. Then, the diagonal matrix $|{\bf \Lambda}|$ can be chosen as 
\begin{equation}\label{eq:Roe}
|{\bf \Lambda}| = {\rm diag} \{ |\lambda_1|, \cdots,|\lambda_8| \},
\end{equation}
which gives the Roe-type dissipation term in \eqref{ES1}, or taken as  
\begin{equation}\label{eq:Rusanov}
|{\bf \Lambda}| = \Big( \max_{1\le \ell \le 8} \{ |\lambda_\ell|\} \Big) {\bf I}_8,
\end{equation}
which gives the Rusanov (also called generalized Lax-Friedrichs) type dissipation term in \eqref{ES1}. 
We remark that the eigenvector scaling theorem \cite{Barth1998} ensures that there exist scaled eigenvalues of ${\bf A}_1$ satisfying \eqref{scalingR}. For the computations of the eigenvalues and eigenvectors of the Jacobian matrix of the RMHD equations, see \cite{anton2010relativistic}.

\subsubsection{High-order entropy stable schemes}

The entropy stable scheme \eqref{1Dsemischeme} with $\widehat{ B }_{1,i+\frac12} = \dgal{B_1}_{i+\frac12} $ and numerical flux \eqref{ES1} is only first order accurate in space, due to the presence of ${\mathcal O}(\Delta x)$ jump $\jump{ {\bf W} }_{i+\frac12}$ in the dissipation term.  
Towards achieving higher-order entropy stable schemes, we should use high-order dissipation operators with more accurate estimate of jump at cell interface. 
In this paper, we consider two approaches to construct high-order dissipation operators: the ENO based approach \cite{fjordholm2012arbitrarily} 
and WENO based approach \cite{biswas2018low}. 

In the ENO based approach, we define high-order entropy stable fluxes as 
\begin{equation}\label{ECfluxENO}
\widehat{\bf F}_{i+\frac12} = \widetilde {\bf F}_{i+\frac12}^{2k,*} - \frac12  {\mathbf R}_{i+\frac12} |{\bf \Lambda}_{i+\frac12}| 
\llangle {\bm \omega  } \rrangle_{i+\frac12}^{{\tt ENO}},
\end{equation}
where $\widetilde { \bf F }_{i+1/2}^{2k,\star }$ is the $2k$th-order entropy conservative flux defined in \eqref{HighECflux}, 
$ \llangle {\bm \omega  } \rrangle_{i+1/2}^{{\tt ENO}} := {\bm \omega  }_{i+1/2}^+ - 
{\bm \omega  }_{i+1/2}^-$ with ${\bm \omega  }_{i+1/2}^-$ and ${\bm \omega  }_{i+1/2}^+$ denoting, respectively, the left and right limiting values of the scaled entropy variables $ \bm \omega := {\bf R}^\top_{i+1/2} {\bf W} $ at interface $x_{i+1/2}$, obtained by $2k$th-order ENO reconstruction. 
The sign preserving property \cite{fjordholm2013eno} of ENO reconstruction implies that 
\begin{equation}\label{ENOsign}
{\rm sign} \big( \llangle {\bm \omega  } \rrangle_{i+\frac12}^{{\tt ENO}} \big) = {\rm sign} \big( \jump{\bm \omega}_{i+\frac12} \big).
\end{equation}
We refer the readers to \cite[Eq.~(3.12)]{fjordholm2012arbitrarily} for a more precise interpretation of the equality \eqref{ENOsign}. 
In the WENO based approach \cite{biswas2018low}, high-order accurate entropy stable fluxes can be defined as
\begin{equation}\label{ECfluxWENO}
\widehat{\bf F}_{i+\frac12} = \widetilde {\bf F}_{i+\frac12}^{2k,*} - \frac12  {\mathbf R}_{i+\frac12} |{\bf \Lambda}_{i+\frac12}| 
\llangle {\bm \omega  } \rrangle_{i+\frac12}^{{\tt WENO}},
\end{equation}
where the $\ell$th component of the vector $\llangle {\bm \omega  } \rrangle_{i+1/2}^{{\tt WENO}}$ is computed by 
\begin{equation}\label{keyddffd}
\llangle { \omega_\ell  } \rrangle_{i+\frac12}^{{\tt WENO}} = \theta_{\ell, i+\frac12} ( {\omega }_{\ell,i+\frac12}^+ - {\omega  }_{\ell,i+\frac12}^- ),  \quad \theta_{\ell,i+\frac12}:=\begin{cases}
1, \  & ( {\omega }_{\ell,i+\frac12}^+ - {\omega  }_{\ell,i+\frac12}^- ) \jump{\omega_\ell}_{i+\frac12} > 0,\\
0, \ & {\rm otherwise,}
\end{cases}
\end{equation}
with ${\omega }_{\ell,i+1/2}^- $ and $ {\omega  }_{\ell,i+1/2}^+$ denoting, respectively, the left and right limiting values of $\omega_\ell$ at interface $x_{i+1/2}$ by using $(2k-1)$th-order WENO reconstruction. Although the standard WENO reconstruction may not satisfy the sign stability, the  
use of switch operator $\theta_{\ell, i+\frac12}$ in \eqref{keyddffd}, proposed in \cite{biswas2018low}, ensures that 
\begin{equation}\label{WENOsign}
{\rm sign} \big( \llangle {\bm \omega  } \rrangle_{i+\frac12}^{{\tt WENO}} \big) = {\rm sign} \big( \jump{\bm \omega}_{i+\frac12} \big).
\end{equation}

\begin{theorem}\label{thm:1DknES}
	The scheme \eqref{1Dsemischeme}, with 
	$\widehat{ B }_{1,i+\frac12} = \widetilde { B }_{1,i+\frac12}^{2k,\star} $ and the ENO-based numerical flux \eqref{ECfluxENO} or the WENO-based numerical flux \eqref{ECfluxWENO}, is entropy stable, and the corresponding numerical entropy flux is given by 
	\begin{equation}\label{ES2kQ}
	\widehat {\mathcal Q}_{i+\frac12} =  \widetilde {\mathcal Q}_{i+\frac12}^{2k,\star} - \frac12 \dgal{ {\bf W} }_{i+\frac12}^\top  {\mathbf R}_{i+\frac12} |{\bf \Lambda}_{i+\frac12}| 
	\llangle {\bm \omega  } \rrangle_{i+\frac12},
	\end{equation}
	where $\widetilde {\mathcal Q}_{i+1/2}^{2k,\star}$ is defined in \eqref{ECkQ}, and $\llangle {\bm \omega  } \rrangle_{i+1/2}$ is taken as $\llangle {\bm \omega  } \rrangle_{i+1/2}^{\tt ENO}$ or $\llangle {\bm \omega  } \rrangle_{i+1/2}^{\tt WENO}$ accordingly. 
\end{theorem}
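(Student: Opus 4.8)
The plan is to reuse, almost verbatim, the telescoping computation from the proof of Theorem~\ref{thm:1DknEC}, treating the numerical flux \eqref{ECfluxENO} (or \eqref{ECfluxWENO}) as the sum of the $2k$th-order entropy conservative flux $\widetilde{\bf F}_{i+\frac12}^{2k,\star}$ and the dissipative correction $-\tfrac12{\mathbf R}_{i+\frac12}|{\bf\Lambda}_{i+\frac12}|\llangle{\bm\omega}\rrangle_{i+\frac12}$, and then showing that the first piece reproduces the discrete entropy \emph{equality} of Theorem~\ref{thm:1DknEC} while the second contributes only a nonpositive term. First I would check that the numerical entropy flux \eqref{ES2kQ} is consistent with $\mathcal Q$: when all its arguments coincide, the one-sided reconstructed traces agree, so $\llangle{\bm\omega}\rrangle_{i+\frac12}={\bf 0}$ and \eqref{ES2kQ} collapses to $\widetilde{\mathcal Q}_{i+\frac12}^{2k,\star}$, which is consistent by Theorem~\ref{thm:1DknEC}.

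Next I would substitute $\widehat{B}_{1,i+\frac12}=\widetilde{B}_{1,i+\frac12}^{2k,\star}$ and the flux \eqref{ECfluxENO} into \eqref{1Dsemischeme}, multiply by $-\Delta x\,{\bf W}_i^\top$, and use ${\bf W}_i^\top{\bf S}({\bf U}_i)=\phi'({\bf W}_i){\bf W}_i=\phi_i$ from Lemma~\ref{lem:phi} to get
\begin{align*}
-\Delta x\,\frac{\rm d}{{\rm d}t}{\mathcal E}({\bf U}_i)
&= {\bf W}_i\cdot\bigl(\widetilde{\bf F}_{i+\frac12}^{2k,\star}-\widetilde{\bf F}_{i-\frac12}^{2k,\star}\bigr)
+\phi_i\bigl(\widetilde{B}_{1,i+\frac12}^{2k,\star}-\widetilde{B}_{1,i-\frac12}^{2k,\star}\bigr)\\
&\quad-\tfrac12{\bf W}_i^\top\bigl({\mathbf R}_{i+\frac12}|{\bf\Lambda}_{i+\frac12}|\llangle{\bm\omega}\rrangle_{i+\frac12}
-{\mathbf R}_{i-\frac12}|{\bf\Lambda}_{i-\frac12}|\llangle{\bm\omega}\rrangle_{i-\frac12}\bigr).
\end{align*}
The first line is exactly the expression manipulated in the proof of Theorem~\ref{thm:1DknEC}, so it already equals $\widetilde{\mathcal Q}_{i+\frac12}^{2k,\star}-\widetilde{\mathcal Q}_{i-\frac12}^{2k,\star}$; only the dissipative line is new.

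To handle that line I would use the splitting ${\bf W}_i=\dgal{{\bf W}}_{i+\frac12}-\tfrac12\jump{{\bf W}}_{i+\frac12}=\dgal{{\bf W}}_{i-\frac12}+\tfrac12\jump{{\bf W}}_{i-\frac12}$. The mean parts produce $-\tfrac12\dgal{{\bf W}}_{i\pm\frac12}^\top{\mathbf R}_{i\pm\frac12}|{\bf\Lambda}_{i\pm\frac12}|\llangle{\bm\omega}\rrangle_{i\pm\frac12}$, which are precisely the dissipative corrections inside $\widehat{\mathcal Q}_{i\pm\frac12}$ in \eqref{ES2kQ} and telescope; the jump parts leave $\tfrac14\jump{{\bf W}}_{i\pm\frac12}^\top{\mathbf R}_{i\pm\frac12}|{\bf\Lambda}_{i\pm\frac12}|\llangle{\bm\omega}\rrangle_{i\pm\frac12}$. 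Since the scaled entropy variables are built with the interface matrix frozen, $\jump{{\bm\omega}}_{i+\frac12}={\mathbf R}_{i+\frac12}^\top\jump{{\bf W}}_{i+\frac12}$, so this leftover equals $\tfrac14\jump{{\bm\omega}}^\top|{\bf\Lambda}|\llangle{\bm\omega}\rrangle=\tfrac14\sum_{\ell=1}^8|\lambda_\ell|\,\jump{\omega_\ell}\,\llangle{\omega_\ell}\rrangle$ at each interface. This is where the sign property \eqref{ENOsign} (respectively \eqref{WENOsign}) enters: componentwise ${\rm sign}(\llangle{\omega_\ell}\rrangle)={\rm sign}(\jump{\omega_\ell})$, hence $\jump{\omega_\ell}\,\llangle{\omega_\ell}\rrangle\ge0$, and $|{\bf\Lambda}|$ is diagonal with nonnegative entries for both choices \eqref{eq:Roe} and \eqref{eq:Rusanov}, so every summand is nonnegative. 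Collecting terms gives
\begin{equation*}
\frac{\rm d}{{\rm d}t}{\mathcal E}({\bf U}_i)+\frac1{\Delta x}\bigl(\widehat{\mathcal Q}_{i+\frac12}-\widehat{\mathcal Q}_{i-\frac12}\bigr)
=-\frac1{4\Delta x}\bigl(\jump{{\bm\omega}}_{i+\frac12}^\top|{\bf\Lambda}_{i+\frac12}|\llangle{\bm\omega}\rrangle_{i+\frac12}
+\jump{{\bm\omega}}_{i-\frac12}^\top|{\bf\Lambda}_{i-\frac12}|\llangle{\bm\omega}\rrangle_{i-\frac12}\bigr)\le0,
\end{equation*}
which is the discrete entropy inequality \eqref{DefESscheme}.

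The step I expect to require the most care is the reduction of the leftover quadratic-type term to the componentwise form $\sum_\ell|\lambda_\ell|\,\jump{\omega_\ell}\,\llangle{\omega_\ell}\rrangle$: it relies on the scaled entropy variables ${\bm\omega}={\mathbf R}_{i+\frac12}^\top{\bf W}$ being formed from the eigenvector matrix frozen at $x_{i+\frac12}$, so that the genuine jump of ${\bm\omega}$ factors through ${\mathbf R}_{i+\frac12}^\top$, together with $|{\bf\Lambda}_{i+\frac12}|$ being diagonal and nonnegative; once these are in place the ENO/WENO sign stability does all the work, and everything else is the same bookkeeping already carried out in Theorems~\ref{thm:1D2nEC} and \ref{thm:1DknEC}.
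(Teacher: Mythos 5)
Your proposal is correct and follows essentially the same route as the paper's own proof: substitute the dissipative flux, reuse the telescoping identity from Theorem~\ref{thm:1DknEC} for the entropy conservative part, split ${\bf W}_i$ into interface means and jumps, convert $\jump{{\bf W}}^\top{\mathbf R}$ into $\jump{{\bm\omega}}^\top$ via the frozen eigenvector matrix, and invoke the sign property \eqref{ENOsign}/\eqref{WENOsign} together with the nonnegativity of $|{\bf \Lambda}|$ to obtain the entropy inequality. The only difference is that you spell out the componentwise reduction $\jump{{\bm\omega}}^\top|{\bf\Lambda}|\llangle{\bm\omega}\rrangle=\sum_\ell|\lambda_\ell|\jump{\omega_\ell}\llangle\omega_\ell\rrangle$ more explicitly than the paper does.
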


\begin{proof}
	First, it is evident that the numerical entropy flux \eqref{ES2kQ} is consistent with the entropy flux $\mathcal Q$.  Substituting  $\widehat{ B }_{1,i+1/2} = \widetilde { B }_{1,i+1/2}^{2k,\star} $ and numerical flux \eqref{ECfluxENO} or \eqref{ECfluxWENO} into the scheme \eqref{1Dsemischeme}, and then following the proof of Theorem \ref{thm:1DknEC}, we obtain 
	\begin{align*}
	-\Delta x & \frac{ \rm d }{ {\rm d} t } {\mathcal E} ( {\bf U}_i )  = \widetilde {\mathcal Q}_{i+\frac12}^{2k,\star} - \widetilde {\mathcal Q}_{i-\frac12}^{2k,\star} - \frac12 {\bf W}_i^\top \left( 
	{\mathbf R}_{i+\frac12} |{\bf \Lambda}_{i+\frac12}| 
	\llangle {\bm \omega  } \rrangle_{i+\frac12} - {\mathbf R}_{i-\frac12} |{\bf \Lambda}_{i-\frac12}| 
	\llangle {\bm \omega  } \rrangle_{i-\frac12}
	\right)
	\\
	& = \widehat{\mathcal Q}_{i+\frac12} - \widehat {\mathcal Q}_{i-\frac12} 
	+ \frac14 \left( \jump{{\bf W}}_{i+\frac12}^\top  {\mathbf R}_{i+\frac12} |{\bf \Lambda}_{i+\frac12}| 
	\llangle {\bm \omega  } \rrangle_{i+\frac12}  + \jump{{\bf W}}_{i-\frac12}^\top  {\mathbf R}_{i-\frac12} |{\bf \Lambda}_{i-\frac12}| 
	\llangle {\bm \omega  } \rrangle_{i-\frac12} \right)
	\\
	& = \widehat{\mathcal Q}_{i+\frac12} - \widehat {\mathcal Q}_{i-\frac12} 
	+ \frac14 \left( \jump{{\bm \omega}}_{i+\frac12}^\top  |{\bf \Lambda}_{i+\frac12}| 
	\llangle {\bm \omega  } \rrangle_{i+\frac12}  + \jump{{\bm \omega}}_{i-\frac12}^\top   |{\bf \Lambda}_{i-\frac12}| 
	\llangle {\bm \omega  } \rrangle_{i-\frac12} \right).
	\end{align*}
	It follows that 
	$$
	\frac{\rm d }{ {\rm d} t } {\mathcal E} ( {\bf U}_i ) + \frac1{\Delta x} 
	\left(  \widehat{\mathcal Q}_{i+\frac12} - \widehat {\mathcal Q}_{i-\frac12}  \right)
	= -\frac1{4\Delta x} \left( \jump{{\bm \omega}}_{i+\frac12}^\top  |{\bf \Lambda}_{i+\frac12}| 
	\llangle {\bm \omega  } \rrangle_{i+\frac12}  + \jump{{\bm \omega}}_{i-\frac12}^\top   |{\bf \Lambda}_{i-\frac12}| 
	\llangle {\bm \omega  } \rrangle_{i-\frac12} \right) \le 0, 
	$$
	where the last inequality is obtained by using \eqref{ENOsign} or \eqref{WENOsign} accordingly. 
	Therefore, the computed solutions of the scheme satisfy a discrete entropy inequality \eqref{DefESscheme} for the numerical entropy flux \eqref{ES2kQ}. 
\end{proof}

\begin{remark}
	The entropy stability of the scheme in Theorem \ref{thm:1DknES} 
	is established only at the semi-discrete level. With explicit time discretization by, for example, a
	Runge-Kutta method, we cannot prove the entropy stability of the resulting fully discrete schemes. 
	The entropy stability of fully discrete schemes will be only demonstrated by numerical experiments in Sect.~\ref{sec:examples}.
\end{remark}

\section{Entropy Conservative Schemes and Entropy Stable Schemes in Two Dimensions}\label{sec:2Dschemes}

The 1D entropy conservative schemes and entropy stable schemes developed in Sect.~\ref{sec:1Dschemes} can be easily extended to the multidimensional cases on rectangular meshes. This section presents the extension for 2D RMHD equations \eqref{ModRMHD} with $d=2$. 
To avoid confusing subscripts, we will use $(x,y)$ to denote the 2D spatial coordinates.  

Let us consider a uniform 2D Cartesian mesh consisting of grid points $(x_i,y_j)=(i\Delta x, j \Delta y)$ for $i,j \in \mathbb Z$, where both spatial step-sizes $\Delta x$ and $\Delta y$ are given positive constants. A semi-discrete finite difference scheme for 2D modified RMHD equations \eqref{ModRMHD} can be written as 
\begin{equation}\label{2Dsemischeme}
\begin{aligned}
& \frac{\rm d}{{\rm d} t} {\bf U}_{ij}(t)  + \frac{ \widehat{\bf F}_{1,i+\frac12,j}(t) -\widehat{\bf F}_{1,i-\frac12,j}(t)  }{\Delta x} 
+ \frac{ \widehat{\bf F}_{2,i,j+\frac12}(t) -\widehat{\bf F}_{2,i,j-\frac12}(t)  }{\Delta y} 
\\
& 
+ {\bf S}( {\bf U}_{ij}(t) ) \left( \frac{ \widehat{ B }_{1,i+\frac12,j}(t) -\widehat{B}_{1,i-\frac12,j}(t)  }{\Delta x} 
+ \frac{ \widehat{ B }_{2,i,j+\frac12}(t) -\widehat{B}_{2,i,j-\frac12}(t)  }{\Delta y} 
\right) = {\bf 0},
\end{aligned}
\end{equation}
where ${\bf U}_{ij}(t) \approx {\bf U}(x_{ij},t) $, and $ \widehat{\bf F}_{1,i+1/2,j} $ (resp.\ $ \widehat{\bf F}_{2,i,j+1/2}  $) is numerical flux consistent with ${\bf F}_1$ (resp.\ ${\bf F}_2$). For convenience, the $t$ dependence of all quantities is suppressed below.

\subsection{Entropy Conservative Schemes}

The semi-discrete scheme \eqref{2Dsemischeme} is said to be entropy conservative if its computed solutions satisfy 
a discrete entropy equality
\begin{equation}\label{DefECscheme2D}
\frac{ \rm d }{ {\rm d} t} {\mathcal E} ( {\bf U}_{ij} ) + \frac{ 1 }{\Delta x} \left( \widetilde {\mathcal Q}_{1,i+\frac12,j}  -   \widetilde {\mathcal Q}_{1,i-\frac12,j} \right)
+ \frac{ 1 }{\Delta y} \left( \widetilde {\mathcal Q}_{2,i,j+\frac12}  -   \widetilde {\mathcal Q}_{2,i,j-\frac12} \right) =0
\end{equation}
for some numerical entropy fluxes $ \widetilde {\mathcal Q}_{1,i+\frac12,j} $ and 
$ \widetilde {\mathcal Q}_{2,i,j+\frac12} $ consistent with the entropy flux ${\mathcal Q}_1$ and ${\mathcal Q}_2$, respectively.

Analogously to the one-dimensional case, for any $k \in \mathbb N_+$ we define 
\begin{equation}\label{HighECfluxXY}
\begin{aligned}
& \widetilde { \bf F }_{1,i+\frac12,j}^{2k,\star }:= 
\sum_{ r=1 }^k \alpha_{ k,r } \sum_{ s=0 }^{r-1}  {\bf F}_1^\star ( {\bf U}_{i-s,j}, {\bf U}_{i-s+r,j}  ),~~~~
\widetilde { \bf F }_{2,i,j+\frac12}^{2k,\star }:= 
\sum_{ r=1 }^k \alpha_{ k,r } \sum_{ s=0 }^{r-1}  {\bf F}_2^\star ( {\bf U}_{i,j-s}, {\bf U}_{i,j-s+r}  ),
\\
& \widetilde { B }_{1,i+\frac12,j}^{2k,\star} :=  \sum_{ r=1 }^k \alpha_{ k,r } \sum_{ s=0 }^{r-1}  \bigg( \frac{B_{1,i-s,j} + B_{1,i-s+r,j} }2  \bigg),~~~~
\widetilde { B }_{2,i,j+\frac12}^{2k,\star} :=  \sum_{ r=1 }^k \alpha_{ k,r } \sum_{ s=0 }^{r-1}   \bigg( \frac{B_{2,i,j-s} + B_{2,i,j-s+r}}2 \bigg),
\end{aligned}
\end{equation}
where the constants $\alpha_{k,r}$ is given by \eqref{alpha_condition}.

\begin{theorem}
	For any $k \in \mathbb N_+$, the scheme \eqref{2Dsemischeme} with 
	$$
	\widehat { \bf F }_{1,i+\frac12,j} =  \widetilde { \bf F }_{1,i+\frac12,j}^{2k,\star }, \ \ \
	\widehat { \bf F }_{2,i,j+\frac12} =  \widetilde { \bf F }_{2,i,j+\frac12}^{2k,\star }, \ \ \
	\widehat { B }_{1,i+\frac12,j} = \widetilde { B }_{1,i+\frac12,j}^{2k,\star}, \ \ \
	\widehat { B }_{2,i,j+\frac12} = \widetilde { B }_{2,i,j+\frac12}^{2k,\star}
	$$
	is a $2k$th-order accurate entropy conservative scheme with 
	the 
	numerical entropy fluxes 
	\begin{equation}\label{ECkQ2D}
	\widetilde {\mathcal Q}_{1,i+\frac12,j}^{2k,\star} = \sum_{ r=1 }^k \alpha_{ k,r } \sum_{ s=0 }^{r-1}  \widetilde {\mathcal Q}_1  ( {\bf U}_{i-s,j}, {\bf U}_{i-s+r,j}  ), \quad 
	\widetilde {\mathcal Q}_{2,i,j+\frac12}^{2k,\star} = \sum_{ r=1 }^k \alpha_{ k,r } \sum_{ s=0 }^{r-1}  \widetilde {\mathcal Q}_2  ( {\bf U}_{i,j-s}, {\bf U}_{i,j-s+r}  ),
	\end{equation}
	where the function $\widetilde {\mathcal Q}_\ell $ is defined by
	\begin{align*}
	\widetilde {\mathcal Q}_\ell ( {\bf U}_L, {\bf U}_R  ) := \frac12 \left( {\bf W}_L + {\bf W}_R \right) \cdot {\bf F}_\ell^\star ( {\bf U}_L, {\bf U}_R  ) +   \frac{ \phi_L + \phi_R}{2}   \left( \frac{B_{\ell,L} + B_{\ell,R}}{2} \right) - \frac{ \psi_{\ell,L} + \psi_{\ell,R}}{2}.
	\end{align*}
\end{theorem}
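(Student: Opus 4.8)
The plan is to reduce the two-dimensional claim to the one-dimensional result of Theorem~\ref{thm:1DknEC}, applied separately in the $x$- and $y$-directions. This reduction works because the spatial operators in \eqref{2Dsemischeme} are just the sum of two one-dimensional difference operators with no $x$--$y$ coupling, while the symmetrization source term, although common to both directions, carries a discrete $\nabla\cdot\mathbf{B}$ that splits additively into an $x$-part and a $y$-part.

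First I would dispose of the routine parts. Consistency of the numerical entropy fluxes \eqref{ECkQ2D} follows by setting all arguments equal to a common state $\mathbf{U}$: the definition \eqref{potentialflux} of the potential fluxes gives $\widetilde{\mathcal{Q}}_\ell(\mathbf{U},\mathbf{U}) = \mathbf{W}^\top\mathbf{F}_\ell(\mathbf{U}) + \phi B_\ell - \psi_\ell = \mathcal{Q}_\ell(\mathbf{U})$, and then the normalization $\sum_{r=1}^k r\alpha_{k,r}=1$ from \eqref{alpha_condition} yields $\widetilde{\mathcal{Q}}_{1,i+1/2,j}^{2k,\star}(\mathbf{U},\dots,\mathbf{U}) = \mathcal{Q}_1(\mathbf{U})$, and likewise for $\widetilde{\mathcal{Q}}_{2,i,j+1/2}^{2k,\star}$. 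The $2k$th-order accuracy is inherited from the LeFloch--Mercier--Rohde construction \cite{lefloch2002fully}: in each direction $\widetilde{\mathbf{F}}_{1,i+1/2,j}^{2k,\star}$, $\widetilde{\mathbf{F}}_{2,i,j+1/2}^{2k,\star}$ and $\widetilde{B}_{1,i+1/2,j}^{2k,\star}$, $\widetilde{B}_{2,i,j+1/2}^{2k,\star}$ are the standard $2k$th-order linear combinations of, respectively, the consistent two-point flux $\mathbf{F}_\ell^\star$ and the arithmetic mean of $B_\ell$, so their differences divided by the corresponding mesh size are $2k$th-order approximations of $\partial_{x_\ell}\mathbf{F}_\ell$ and $\partial_{x_\ell}B_\ell$.

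The core of the proof is the discrete entropy equality, for which I would multiply \eqref{2Dsemischeme} by $\mathcal{E}'(\mathbf{U}_{ij}) = \mathbf{W}_{ij}^\top$. By Lemma~\ref{lem:phi}, $\mathbf{W}_{ij}^\top\mathbf{S}(\mathbf{U}_{ij}) = \phi'(\mathbf{W}_{ij})\cdot\mathbf{W}_{ij} = \phi_{ij}$, so the source contribution separates cleanly into $\phi_{ij}$ times the sum of the discrete $x$- and $y$-divergences of $\mathbf{B}$. Grouping the $x$-flux term with the $x$-part of the source produces
\[
\mathbf{W}_{ij}\cdot\left(\widetilde{\mathbf{F}}_{1,i+\frac12,j}^{2k,\star}-\widetilde{\mathbf{F}}_{1,i-\frac12,j}^{2k,\star}\right) + \phi_{ij}\left(\widetilde{B}_{1,i+\frac12,j}^{2k,\star}-\widetilde{B}_{1,i-\frac12,j}^{2k,\star}\right),
\]
which, with $j$ held fixed, is exactly the one-dimensional quantity manipulated in the proof of Theorem~\ref{thm:1DknEC} (its equations \eqref{keye13454}--\eqref{Piimr}). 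Running that telescoping argument verbatim — expanding $\widetilde{\mathbf{F}}_{1,i+1/2,j}^{2k,\star}-\widetilde{\mathbf{F}}_{1,i-1/2,j}^{2k,\star}=\sum_r\alpha_{k,r}\big(\mathbf{F}_1^\star(\mathbf{U}_{ij},\mathbf{U}_{i+r,j})-\mathbf{F}_1^\star(\mathbf{U}_{i-r,j},\mathbf{U}_{ij})\big)$, re-expressing each node value of $\mathbf{W}$ and $\phi$ as a pair-average minus half a pair-jump, and invoking the entropy conservation identity \eqref{DefECflux} for $\mathbf{F}_1^\star$ so that the $\psi_{1,ij}$ terms cancel — shows this quantity equals $\widetilde{\mathcal{Q}}_{1,i+1/2,j}^{2k,\star}-\widetilde{\mathcal{Q}}_{1,i-1/2,j}^{2k,\star}$. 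The identical manipulation in the $y$-direction gives $\widetilde{\mathcal{Q}}_{2,i,j+1/2}^{2k,\star}-\widetilde{\mathcal{Q}}_{2,i,j-1/2}^{2k,\star}$. Dividing by $\Delta x$ and $\Delta y$ respectively and adding recovers exactly \eqref{DefECscheme2D}.

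There is no genuine obstacle; the one point to get right is the accounting of the source term — namely that, because $\mathbf{S}(\mathbf{U}_{ij})$ is dotted against $\mathbf{W}_{ij}$ to give the scalar $\phi_{ij}$ and the discrete $\nabla\cdot\mathbf{B}$ is an additive sum of one-dimensional differences, each directional piece of the source pairs precisely with the corresponding directional flux difference, so the computation decouples into two independent copies of the one-dimensional entropy identity. Everything else is a direct transcription of the proof of Theorem~\ref{thm:1DknEC}.
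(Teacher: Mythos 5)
Your proposal is correct and follows exactly the route the paper intends: the paper omits this proof, stating only that it is "similar to those of Theorems \ref{thm:1D2nEC}--\ref{thm:1DknEC}," and your argument is precisely that dimension-by-dimension reduction, with the key observation that ${\bf W}_{ij}\cdot{\bf S}({\bf U}_{ij})=\phi_{ij}$ lets the source term split additively so each directional piece pairs with its flux difference. Nothing to add.
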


	The proof is similar to those of Theorems \ref{thm:1D2nEC}--\ref{thm:1DknEC} and is omitted here.

\subsection{Entropy Stable Schemes}

The 2D semi-discrete scheme \eqref{2Dsemischeme} is said to be entropy stable if its computed solutions satisfy a discrete entropy inequality
\begin{equation}\label{DefESscheme2D}
\frac{ \rm d }{ {\rm d} t} {\mathcal E} ( {\bf U}_{ij} ) + \frac{ 1 }{\Delta x} \left( \widehat {\mathcal Q}_{1,i+\frac12,j}  -   \widehat {\mathcal Q}_{1,i-\frac12,j} \right)
+ \frac{ 1 }{\Delta y} \left( \widehat {\mathcal Q}_{2,i,j+\frac12}  -   \widehat {\mathcal Q}_{2,i,j-\frac12} \right) \le 0
\end{equation}
for some numerical entropy fluxes $ \widehat {\mathcal Q}_{1,i+\frac12,j} $ and 
$ \widehat {\mathcal Q}_{2,i,j+\frac12} $ consistent with the entropy flux ${\mathcal Q}_1$ and ${\mathcal Q}_2$, respectively.  

Analogously to the one-dimensional case, we define 
\begin{equation}\label{HighESfluxXY}
\begin{aligned}
& \widehat { \bf F }_{1,i+\frac12,j} =  \widetilde { \bf F }_{1,i+\frac12,j}^{2k,\star } - \frac12  {\mathbf R}_{1,i+\frac12,j} |{\bf \Lambda}_{1,i+\frac12,j}| 
\llangle {\bm \omega  } \rrangle_{i+\frac12,j},
\\
& \widehat { \bf F }_{2,i,j+\frac12} =  \widetilde { \bf F }_{2,i,j+\frac12}^{2k,\star } - \frac12  {\mathbf R}_{2,i,j+\frac12} |{\bf \Lambda}_{2,i,j+\frac12}| 
\llangle {\bm \omega  } \rrangle_{i,j+\frac12},
\end{aligned}
\end{equation}
where ${\mathbf R}_{1,i+\frac12,j}$ (resp.\ ${\mathbf R}_{2,i,j+\frac12}$) is the matrix formed by the scaled right eigenvectors of the Jacobian matrix ${\bf A}_1 ({\bf U}_{i+\frac12,j}^{\tt ave}) := {\bf F}'_1 ( {\bf U}_{i+\frac12,j}^{\tt ave} ) + {\bf S} ({\bf U}_{i+\frac12,j}^{\tt ave}) B_1'( {\bf U}_{i+\frac12,j}^{\tt ave} )$ (resp.\ ${\bf A}_2 ({\bf U}_{i,j+\frac12}^{\tt ave}) := {\bf F}'_2 ( {\bf U}_{i,j+\frac12}^{\tt ave} ) + {\bf S} ({\bf U}_{i,j+\frac12}^{\tt ave}) B_2'( {\bf U}_{i,j+\frac12}^{\tt ave} )$); the diagonal matrix $|{\bf \Lambda}_{1,i+\frac12,j}|$ (resp.\ $|{\bf \Lambda}_{2,i,j+\frac12}|$) is defined as \eqref{eq:Roe} or \eqref{eq:Rusanov} with the eigenvalues of ${\bf A}_1 ({\bf U}_{i+\frac12,j}^{\tt ave})$ (resp.\ ${\bf A}_2 ({\bf U}_{i,j+\frac12}^{\tt ave})$). Here ${\bf U}_{i+\frac12,j}^{\tt ave}$ and ${\bf U}_{i,j+\frac12}^{\tt ave}$ denote the average states at the corresponding interfaces, analogously to the 1D case defined in \eqref{AVEstate}. The ``high-order accurate'' jumps $\llangle {\bm \omega  } \rrangle_{i+1/2,j}$ and $\llangle {\bm \omega  } \rrangle_{i,j+1/2}$ in \eqref{HighESfluxXY} are computed by ENO reconstruction or WENO reconstruction using switch operator, which can be performed precisely as in the 1D case, dimension by dimension.

\begin{theorem}
	The scheme \eqref{2Dsemischeme}, with $\widehat { B }_{1,i+1/2,j} = \widetilde { B }_{1,i+1/2,j}^{2k,\star},$ 
	$\widehat { B }_{2,i,j+1/2} = \widetilde { B }_{2,i,j+1/2}^{2k,\star}$ and 
	numerical fluxes \eqref{HighESfluxXY}, is entropy stable, and the numerical entropy flux is 
	\begin{align*}
	&\widehat {\mathcal Q}_{1,i+\frac12,j} =  \widetilde {\mathcal Q}_{1,i+\frac12,j}^{2k,\star} - \frac14 ( {\bf W}_{i,j} + {\bf W}_{i+1,j}   )^\top {\mathbf R}_{1,i+\frac12,j} |{\bf \Lambda}_{1,i+\frac12,j}| 
	\llangle {\bm \omega  } \rrangle_{i+\frac12,j},
	\\
	&\widehat {\mathcal Q}_{2,i,j+\frac12} =  \widetilde {\mathcal Q}_{2,i,j+\frac12}^{2k,\star} - \frac14  ( {\bf W}_{i,j} + {\bf W}_{i,j+1}   )^\top  {\mathbf R}_{2,i,j+\frac12} |{\bf \Lambda}_{2,i,j+\frac12}| 
	\llangle {\bm \omega  } \rrangle_{i,j+\frac12},
	\end{align*}
	where $\widetilde {\mathcal Q}_{1,i+1/2,j}^{2k,\star}$ and $\widetilde {\mathcal Q}_{2,i,j+1/2}^{2k,\star}$ are defined in \eqref{ECkQ2D}. 
\end{theorem}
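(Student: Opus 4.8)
The plan is to mimic, in a dimension-by-dimension fashion, the argument already used in the proofs of Theorem~\ref{thm:1DknEC} and Theorem~\ref{thm:1DknES}, treating the two coordinate directions independently and then adding the resulting contributions. First I would check consistency: when all arguments coincide, $\llangle{\bm\omega}\rrangle$ vanishes and $\widetilde{\mathcal Q}_{\ell}(\mathbf U,\mathbf U)=\mathbf W^\top\mathbf F_\ell-\mathcal Q_\ell+\phi B_\ell - \psi_\ell = 0$ by \eqref{potentialflux}, so each $\widehat{\mathcal Q}_{\ell}$ reduces to $\mathcal Q_\ell(\mathbf U)$; combined with \eqref{alpha_condition} this gives consistency of the numerical entropy fluxes.

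Next I would multiply the scheme \eqref{2Dsemischeme} on the left by $\mathbf W_{ij}^\top=\mathcal E'(\mathbf U_{ij})$. The term $\mathbf W_{ij}^\top\frac{\rm d}{{\rm d}t}\mathbf U_{ij}$ equals $\frac{\rm d}{{\rm d}t}\mathcal E(\mathbf U_{ij})$. The $x$-flux difference $\mathbf W_{ij}^\top(\widehat{\bf F}_{1,i+\frac12,j}-\widehat{\bf F}_{1,i-\frac12,j})$ splits into an entropy-conservative part built from $\widetilde{\bf F}_{1,i+\frac12,j}^{2k,\star}$ and a dissipative part built from ${\mathbf R}_{1,i+\frac12,j}|{\bf\Lambda}_{1,i+\frac12,j}|\llangle{\bm\omega}\rrangle_{i+\frac12,j}$, and similarly in $y$. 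For the source term, the crucial point is Lemma~\ref{lem:phi}: since $\mathbf S^\top=\phi'(\mathbf W)$ and $\phi$ is homogeneous of degree one, $\mathbf W_{ij}^\top\mathbf S(\mathbf U_{ij})=\phi'(\mathbf W_{ij})\cdot\mathbf W_{ij}=\phi_{ij}$. Hence the source contribution is $\phi_{ij}\big(\tfrac{1}{\Delta x}(\widetilde B_{1,i+\frac12,j}^{2k,\star}-\widetilde B_{1,i-\frac12,j}^{2k,\star})+\tfrac{1}{\Delta y}(\widetilde B_{2,i,j+\frac12}^{2k,\star}-\widetilde B_{2,i,j-\frac12}^{2k,\star})\big)$, and in each direction the factor $\phi_{ij}$ pairs with the telescoped magnetic-field jumps exactly as in the 1D proof.

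At this stage the $x$-direction contribution (entropy-conservative part plus source part in $x$) is handled verbatim as in Theorem~\ref{thm:1DknEC}: using the linear-combination structure \eqref{HighECfluxXY}, the entropy-conservative flux property \eqref{DefECflux} of ${\bf F}_1^\star$, and the identity $\widetilde{\mathcal Q}_1(\mathbf U_L,\mathbf U_R)+\psi_{1,L}=\mathbf W_L\cdot{\bf F}_1^\star+\tfrac{\phi_L}{2}(B_{1,L}+B_{1,R})$ (and its mirror), it collapses to $\widetilde{\mathcal Q}_{1,i+\frac12,j}^{2k,\star}-\widetilde{\mathcal Q}_{1,i-\frac12,j}^{2k,\star}$; likewise the $y$-direction contribution collapses to $\widetilde{\mathcal Q}_{2,i,j+\frac12}^{2k,\star}-\widetilde{\mathcal Q}_{2,i,j-\frac12}^{2k,\star}$. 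For the dissipative parts, I would write $\mathbf W_{ij}=\dgal{\mathbf W}_{i+\frac12,j}\mp\tfrac12\jump{\mathbf W}_{i\pm\frac12,j}$, use the eigenvector scaling \eqref{scalingR} together with $\bm\omega={\mathbf R}^\top\mathbf W$ to rewrite $\jump{\mathbf W}^\top{\mathbf R}|{\bf\Lambda}|\llangle{\bm\omega}\rrangle=\jump{\bm\omega}^\top|{\bf\Lambda}|\llangle{\bm\omega}\rrangle$, and invoke the sign property \eqref{ENOsign} or \eqref{WENOsign} so that this quadratic-type form is nonnegative componentwise. Collecting everything yields
\[
\frac{\rm d}{{\rm d}t}\mathcal E(\mathbf U_{ij})+\frac{1}{\Delta x}\big(\widehat{\mathcal Q}_{1,i+\frac12,j}-\widehat{\mathcal Q}_{1,i-\frac12,j}\big)+\frac{1}{\Delta y}\big(\widehat{\mathcal Q}_{2,i,j+\frac12}-\widehat{\mathcal Q}_{2,i,j-\frac12}\big)\le 0,
\]
which is \eqref{DefESscheme2D}. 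I expect the only real bookkeeping obstacle to be keeping the source-term contributions correctly split between the two directions so that each $\phi_{ij}B_{\ell}$ piece telescopes against the right directional $\psi_\ell$; once that is organized, everything reduces to the 1D lemmas applied twice.
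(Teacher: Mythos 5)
Your proposal is correct and is precisely the dimension-by-dimension extension of Theorems~\ref{thm:1DknEC} and~\ref{thm:1DknES} that the paper intends (the paper omits this proof, remarking only that it is similar to the 1D case): the source term is absorbed via $\mathbf W_{ij}^\top\mathbf S(\mathbf U_{ij})=\phi_{ij}$ from Lemma~\ref{lem:phi}, each directional conservative-plus-source block telescopes to a difference of the $\widetilde{\mathcal Q}_\ell^{2k,\star}$, and the dissipative blocks are signed by \eqref{ENOsign} or \eqref{WENOsign}. The only blemish is the consistency line, which should read $\widetilde{\mathcal Q}_\ell(\mathbf U,\mathbf U)=\mathbf W^\top\mathbf F_\ell+\phi B_\ell-\psi_\ell=\mathcal Q_\ell$ rather than $=0$; the conclusion you draw from it is nonetheless the right one.
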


	The proof is similar to that of Theorem \ref{thm:1DknES} and thus is omitted here.

\section{Numerical Experiments}\label{sec:examples}

In this section, we conduct numerical experiments on several 1D and 2D benchmark RMHD problems, 
to demonstrate the performance of the proposed high-order accurate entropy stable schemes and entropy conservative schemes. 
For convenience, we abbreviate the forth-order and sixth-order accurate ($k=2,3$ respectively) entropy conservative schemes as {\tt EC4} and {\tt EC6}, respectively. 
The 1D and 2D entropy stable schemes with 1D numerical flux \eqref{ECfluxENO} or 2D numerical flux \eqref{HighESfluxXY},  using $k=2$ and fourth-order accurate ENO reconstruction,   are abbreviated as {\tt ES4}. 
The 1D and 2D entropy stable schemes with 1D numerical flux \eqref{ECfluxWENO} or 2D numerical flux \eqref{HighESfluxXY},  using $k=3$ and fifth-order accurate WENO reconstruction,   are abbreviated as {\tt ES5}.   
Unless otherwise stated, 
all these semi-discrete schemes are equipped with 
a fourth-order accurate explicit total-variation-diminishing (TVD) Runge-Kutta time discretization to obtain fully discrete schemes, and
we use a CFL number of $0.4$ and the ideal equation of state $p=(\Gamma-1)\rho e$ with $\Gamma=5/3$. 
In all the tests, the Rusanov-type dissipation operator is employed in the entropy stable schemes.


\begin{expl}[Smooth problem]\label{example1D:accuracy}\rm
	This test is used to check the accuracy of our schemes. Consider a 1D smooth problem which 
	describes Alfv\'en waves propagating periodically within the domain $[0,1]$ and has the exact solution
	$$
	{\bf V}(x,t)=( 1,~0,~v_2(x,t),~v_3(x,t),~1,~\sigma v_2(x,t),~\sigma v_3(x,t),~0.01 )^\top, \quad (x,t)\in [0,1] \times {\mathbb{R}}^+,
	$$
	where the vector $\bf V$ denotes the primitive variables as defined in \eqref{priV}, $\sigma=\sqrt{1+\rho h \gamma^2}$, $v_2(x,t) = 0.2 \sin( 2 \pi ( x + t/\sigma )  )$, and 
	$v_3(x,t)=0.2\cos( 2 \pi ( x + t/\sigma ) )$.

	In our computations, the domain $[0,1]$ is divided into $N$ uniform cells, and periodic boundary conditions are specified. 
	The time step-size is taken as $\Delta t = 0.4 \Delta x^{\frac64} $ and $\Delta t = 0.4 \Delta x^{\frac54} $ for 
	$\tt EC6$ and $\tt ES 5$, respectively, in order to make the error in spatial discretization dominant in the present case. 
	Tables \ref{tab:acc1} and \ref{tab:acc2} 
	list the numerical errors at $t = 0.5$ in the numerical velocity component $v_2$ and the corresponding
	convergence rates for the $\tt EC4$, $\tt ES4$, $\tt EC6$ and $\tt ES 5$ schemes at different grid resolutions. The convergence behaviors for $v_3$, $B_2$ and $B_3$ are similar and omitted. 
	We clearly observe that the expected convergence orders of the schemes are achieved accordingly.
		Figure \ref{fig:ex1} displays the evolution of discrete total entropy $\sum_{i} {\mathcal E} ( {\bf U}_i (t)  ) \Delta x$, which approximates $\int_0^1 {\mathcal E} ( {\bf U}(x,t) ) dx $ that 
	remains conservative 
	for this smooth problem. For the entropy stable schemes, the discrete total entropy 
	is not constant and slowly decays due to
	the numerical dissipation, but we observe convergence with grid refinement, while for the
	entropy conservative schemes, it is nearly constant with time as expected. 
	
	\begin{table}[htbp]
		\centering
		\caption{\small Example \ref{example1D:accuracy}: 
			$l^1$-errors and $l^2$-errors in $v_2$ at $t=0.5$, and corresponding convergence rates for
			the {\tt EC4} and {\tt ES4} schemes  at
			different grid resolutions.
		}\label{tab:acc1}
		\begin{tabular}{c||c|c|c|c||c|c|c|c}
			\hline
			\multirow{2}{12pt}{$N$}
			&\multicolumn{4}{c||}{ {\tt EC4} }
			&\multicolumn{4}{c}{ {\tt ES4} } 
			\\
			\cline{2-9}
			& $l^1$-error& order & $l^2$-error & order & $l^1$-error& order & $l^2$-error & order \\
			\hline
			8& 3.14e-3& --          &3.53e-3& --      & 4.23e-3 & --   & 4.62e-3  &-- \\
			16& 2.10e-4& 3.91   & 2.33e-4&    3.92    & 2.34e-4 & 4.17 & 2.60e-4 & 4.15\\
			32& 1.33e-5& 3.98   & 1.48e-5&  3.98      & 1.38e-5 & 4.09 & 1.54e-5 & 4.08 \\
			64  & 8.34e-7 &  3.99    & 9.27e-7&  3.99 & 8.46e-7 & 4.03 & 9.40e-7& 4.03 \\
			128 & 5.22e-8 &  4.00 & 5.80e-8&    4.00  & 5.25e-8 & 4.01 & 5.83e-8 & 4.01 \\
			256 &  3.26e-9 &  4.00  &3.62e-9&  4.00   & 3.27e-9 & 4.00 & 3.63e-9 & 4.00 \\
			512 &  2.04e-10 &  4.00  &2.26e-10&  4.00 & 2.04e-10 & 4.00 & 2.27e-10 & 4.00 \\
			\hline
		\end{tabular}
	\end{table}

	\begin{table}[htbp]
		\centering
		\caption{\small Same as Table \ref{tab:acc1} except for 
			the {\tt EC6} and {\tt ES5} schemes.
		}\label{tab:acc2}
		\begin{tabular}{c||c|c|c|c||c|c|c|c}
			\hline
			\multirow{2}{12pt}{$N$}
			&\multicolumn{4}{c||}{ {\tt EC6} }
			&\multicolumn{4}{c}{ {\tt ES5} } 
			\\
			\cline{2-9}
			& $l^1$-error& order & $l^2$-error & order & $l^1$-error& order & $l^2$-error & order \\
			\hline
			8&  3.85e-4 & --          & 4.36e-4 & --      & 5.64e-3 & --   & 6.16e-3  &-- \\
			16&  7.11e-6 & 5.76   & 7.96e-6 &    5.78    & 2.62e-4 & 4.43 & 2.94e-4 & 4.39\\
			32& 1.18e-7 & 5.91   & 1.31e-7 &  5.93     & 8.59e-6 & 4.93 & 9.53e-6 & 4.95 \\
			64  & 1.86e-9 &  5.98    & 2.07e-9 &  5.98  & 2.68e-7 & 5.00 & 2.97e-7 & 5.00 \\
			128 & 2.92e-11 &  6.00 & 3.24e-11 &    5.99  & 8.35e-9 & 5.00 & 9.27e-9 & 5.00 \\
			256 &  4.81e-13 &  5.92  &5.34e-13 &  5.92   & 2.61e-10 & 5.00 & 2.89e-10 & 5.00 \\
			\hline
		\end{tabular}
	\end{table}

		\begin{figure}[htbp]
		\centering
		\includegraphics[width=0.48\textwidth]{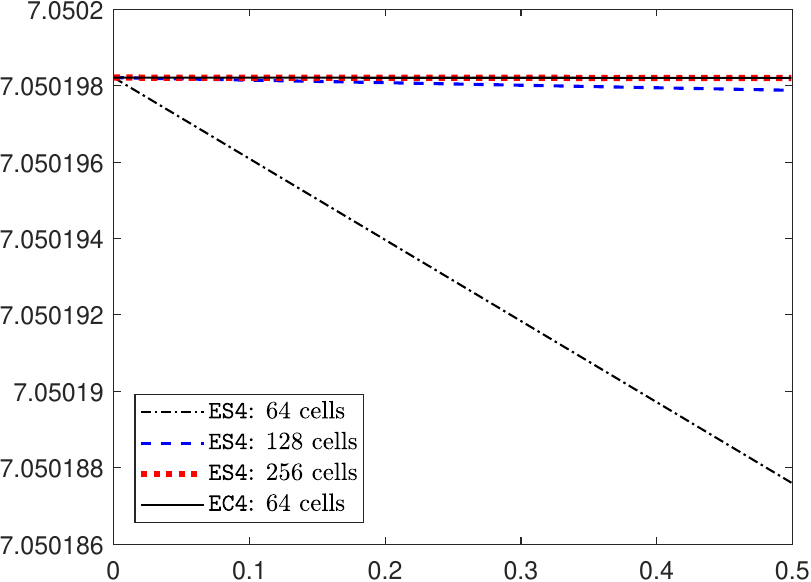}
		\includegraphics[width=0.48\textwidth]{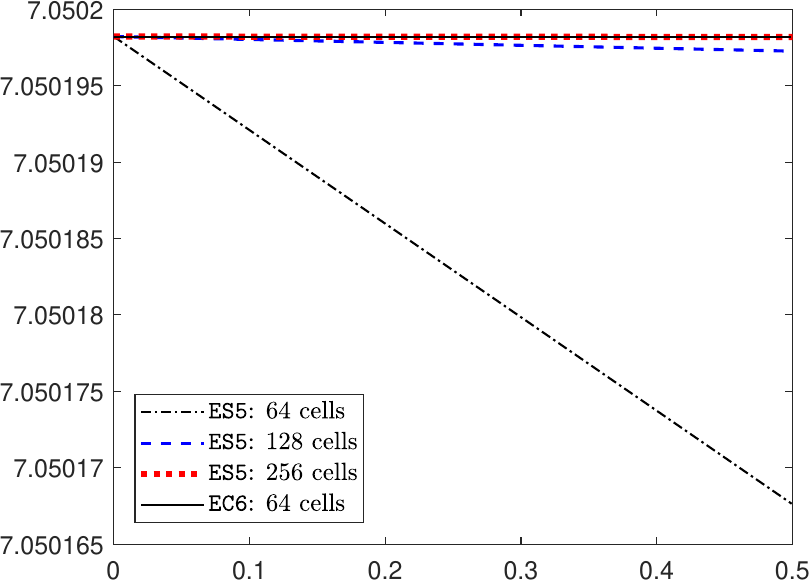}
		\caption{\small
			Example \ref{example1D:accuracy}: Evolution of discrete total entropy. 
		}\label{fig:ex1} 
	\end{figure}
	
\end{expl}

\begin{expl}[1D Riemann problems]\label{example1DRiemanns}\rm
	This example 
	verifies the
	performance of the proposed entropy conservative and entropy stable schemes in 
	resolving 1D RMHD wave configurations, by simulating four 1D Riemann problems (RPs) \cite{Balsara2001,komissarov1999godunov}. 
	The initial data of each RP comprise two different constant states separated by the initial discontinuity at $x=0$, see Table \ref{tab:riemann1d}. 
	Their exact solutions were provided in \cite{giacomazzo_rezzolla_2006}.

	\begin{table}[htbp]
		\centering 
		\caption{\small Initial data of the four 1D RPs in Example \ref{example1DRiemanns}.
		}
		\begin{tabular}{c|c|cccccccc}
			\hline
			\multicolumn{2}{c|}{} & $\rho $   & $ v_1 $ &  $v_2 $   &  $v_3 $  &  $ B_1 $  & $ B_2 $  &  $ B_3 $  &  $ p $  \\
			\hline
			\multirow{2}{32pt}{RP \uppercase\expandafter{\romannumeral1} }
			&
			left state  & 1.08 & 0.4  & 0.3
			& 0.2 & 2 & 0.3 & 0.3 & 0.95 \\
			\cline{2-10}
			& right state & 1 & -0.45  & -0.2
			& 0.2 & 2.0 & -0.7 & 0.5 & 1  \\
			\hline
			\multirow{2}{32pt}{RP \uppercase\expandafter{\romannumeral2} }
			&
			left state  & 1 & 0  & 0
			& 0 & 5 & 6 & 6 & 30 \\
			\cline{2-10}
			& right state & 1 & 0  & 0
			& 0 & 5 & 0.7 & 0.7 & 1  \\
			\hline
			\multirow{2}{32pt}{RP \uppercase\expandafter{\romannumeral3}}
			&
			left state  & 1 & 0  & 0.3
			& 0.4 & 1 & 6 & 2 & 5 \\
			\cline{2-10}
			& right state & 0.9 & 0  & 0
			& 0 & 1 & 5 & 2 & 5.3 \\
			\hline
			\multirow{2}{32pt}{RP \uppercase\expandafter{\romannumeral4}}
			&
			left state  & 1 & 0  & 0
			& 0 & 0 & 2 & 0 & 10 \\
			\cline{2-10}
			& right state & 0.1 & 0  & 0
			& 0 & 0 & 0 & 0 & 1 \\
			\hline			
		\end{tabular}\label{tab:riemann1d}
	\end{table}
	
Figures~\ref{fig:RP1}, \ref{fig:RP2} and \ref{fig:RP3} give the numerical results 
 obtained by using {\tt ES5} with $1000$ uniform cells 
 for RPs \uppercase\expandafter{\romannumeral1}, \uppercase\expandafter{\romannumeral2} and  \uppercase\expandafter{\romannumeral3}, respectively. 
We see 
that the wave structures including discontinuities are well resolved by 
{\tt ES5} and that the
numerical solutions are in good agreement with the exact ones.

		\begin{figure}[htbp]
		\centering
		{\includegraphics[width=0.48\textwidth]{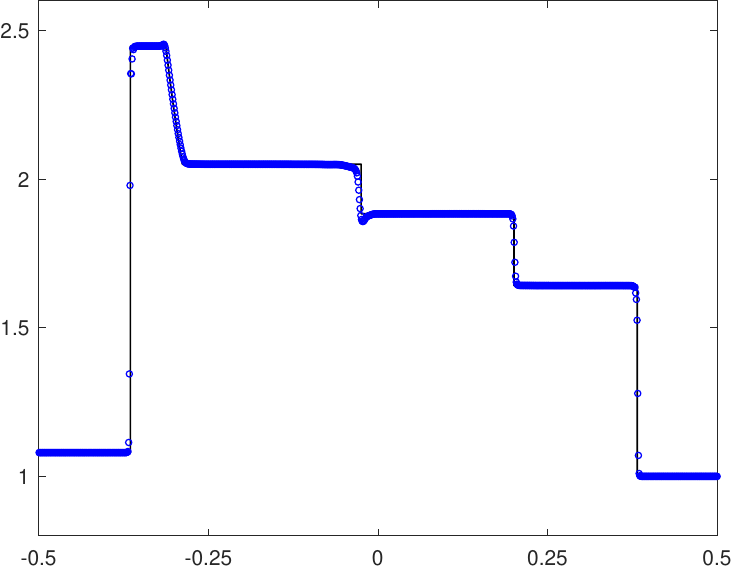}}
		{\includegraphics[width=0.48\textwidth]{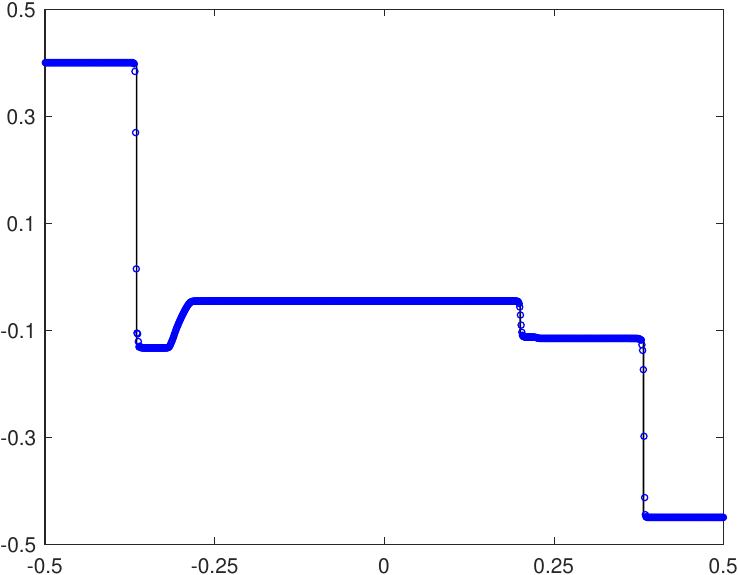}}
		{\includegraphics[width=0.48\textwidth]{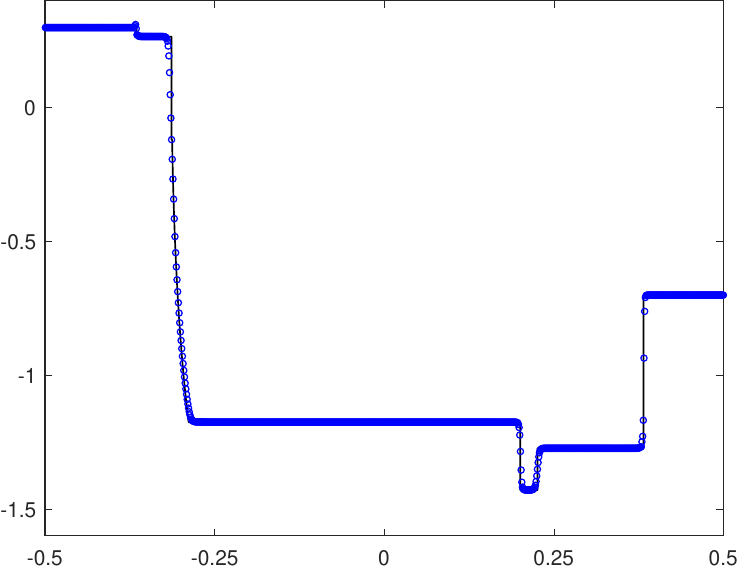}}
		{\includegraphics[width=0.48\textwidth]{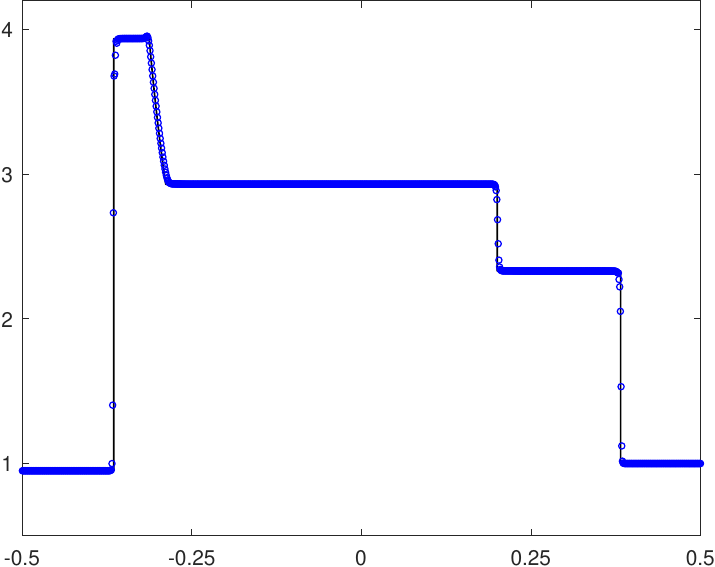}}
		\caption{\small The first Riemann problem in Example \ref{example1DRiemanns}: The density $\rho$ (top-left),
			velocity $v_1$ (top-right), magnetic field component $B_2$ (bottom-left), and pressure $p$ (bottom-right) at $t=0.55$. 
			The symbols ``${\color{blue}\circ}$'' denote the numerical solutions 
			computed by {\tt ES5}, while 
			the solid lines denote the exact solutions.}
		\label{fig:RP1}
	\end{figure}

		\begin{figure}[htbp]
		\centering
		{\includegraphics[width=0.48\textwidth]{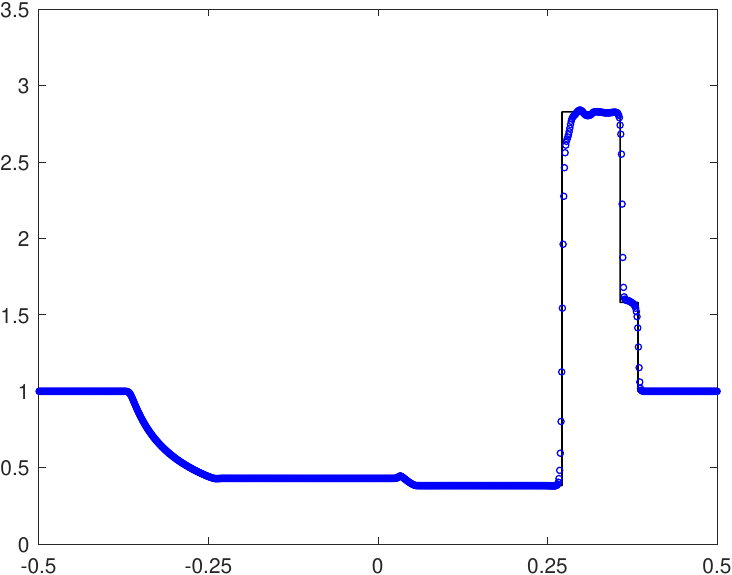}}
		{\includegraphics[width=0.48\textwidth]{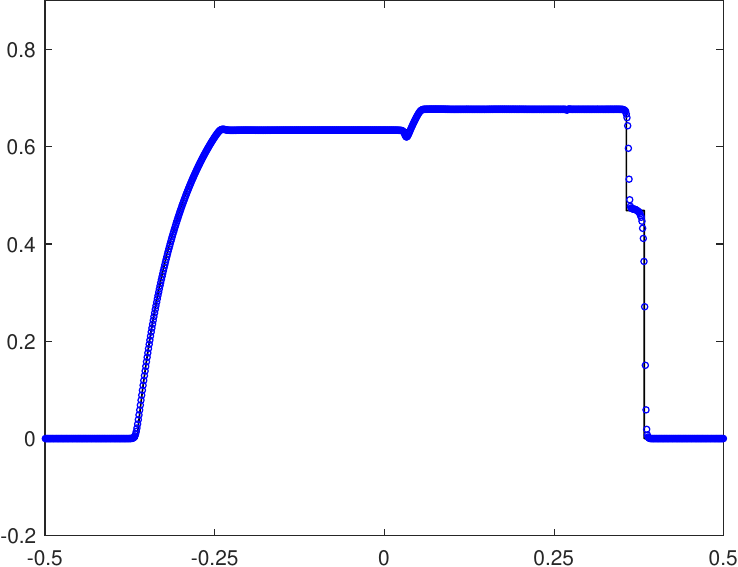}}
		{\includegraphics[width=0.48\textwidth]{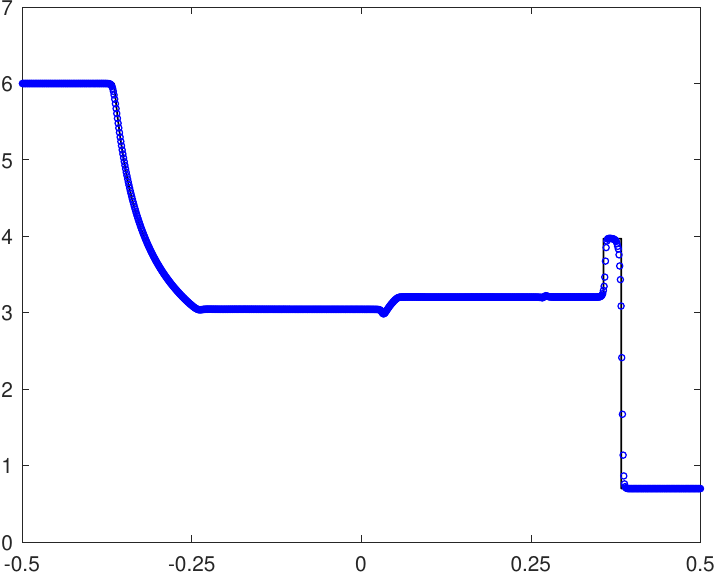}}
		{\includegraphics[width=0.48\textwidth]{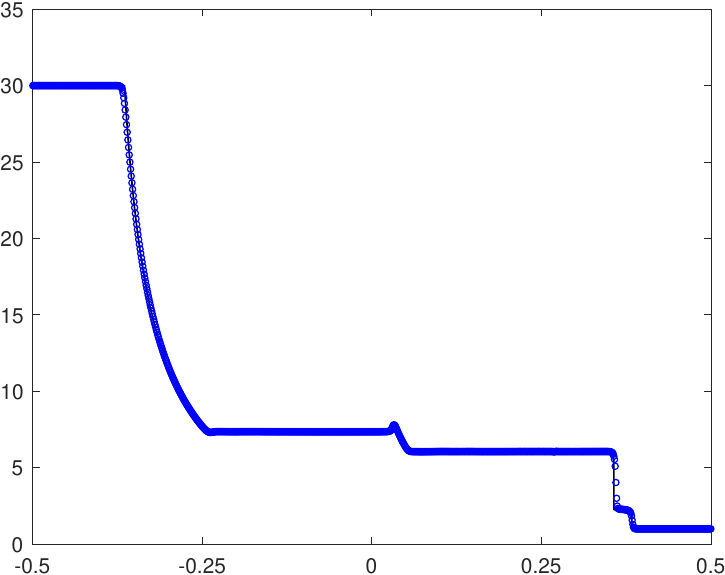}}
		\caption{\small Same as Figure~\ref{fig:RP1} except for the second Riemann problem at $t=0.4$.}
		\label{fig:RP2}
	\end{figure}

	\begin{figure}[htbp]
		\centering
		{\includegraphics[width=0.48\textwidth]{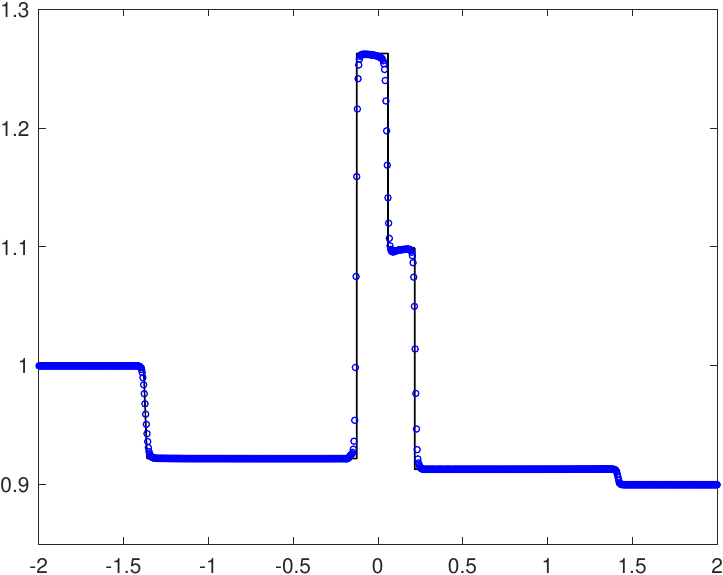}}
		{\includegraphics[width=0.48\textwidth]{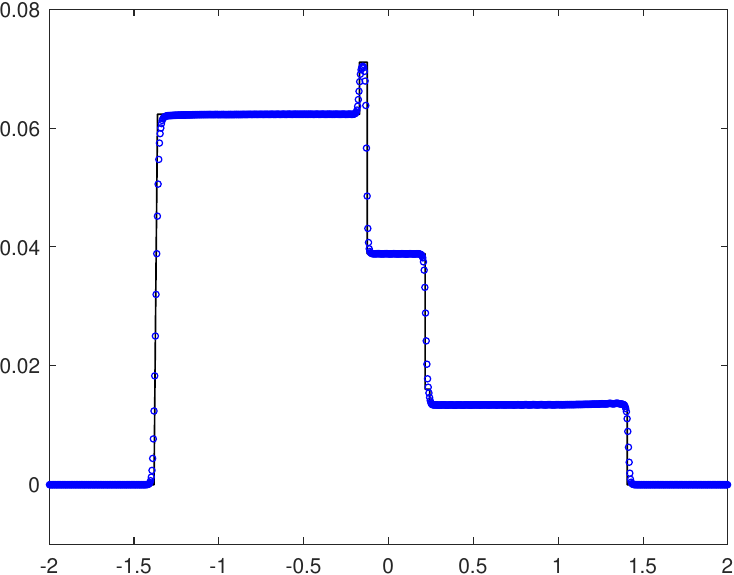}}
		{\includegraphics[width=0.48\textwidth]{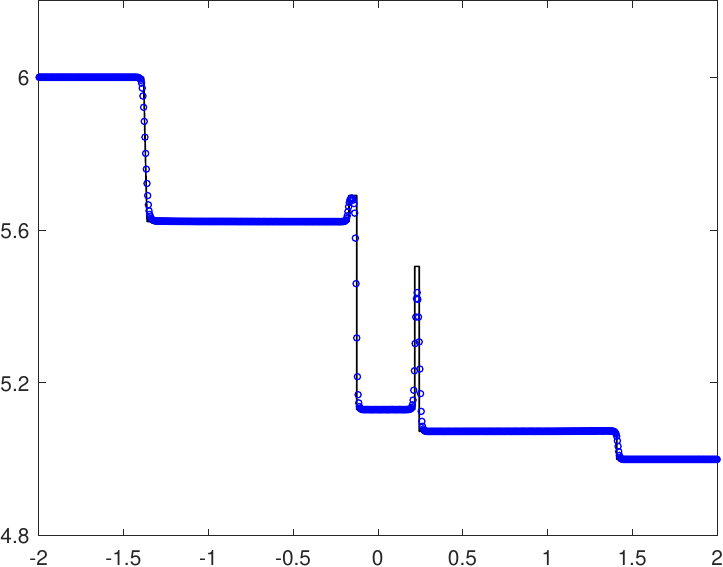}}
		{\includegraphics[width=0.48\textwidth]{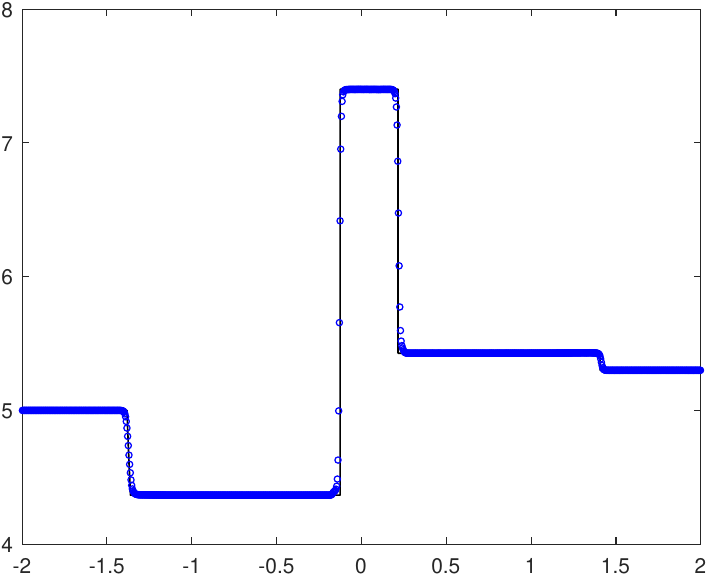}}
		\caption{\small Same as Figure~\ref{fig:RP1} except for the third Riemann problem at $t=1.5$.}
		\label{fig:RP3}
	\end{figure}

The RP \uppercase\expandafter{\romannumeral4}  is a variant of the RMHD shock-tube 2 proposed by Komissarov \cite{komissarov1999godunov} with $\Gamma = \frac 43$. 
The numerical solutions at $t=1$, computed respectively by {\tt EC6} and {\tt ES5} on $1000$ uniform girds, 
are displayed in  
Figure~\ref{fig:RP4}. 
We see that {\tt EC6} produces high-frequency oscillations in its numerical solution, because it enforces 
entropy conservation so that the entropy 
dissipation at the shock does not take place. 
Oscillations are not produced in the numerical solution of {\tt ES5}, owing to its WENO dissipative mechanism.  
In order to carefully verify the entropy conservative/stable property of the proposed schemes, we 
investigate the behavior of the discrete total entropy ${\mathcal E}_{\rm tot} (t):=\sum_{i} {\mathcal E} ( {\bf U}_i (t)  ) \Delta x$. 
Figure \ref{fig:RP4_entropy1} shows the evolution of ${\mathcal E}_{\rm tot} (t)$ computed by 
{\tt EC6} and {\tt ES5}, respectively, with fourth-order TVD explicit Runge-Kutta time discretization and different CFL numbers (correspond to different time step-sizes). 
For {\tt ES5}, we see that the entropy dissipates at almost the same rates for different time step-sizes, as expected. 
Although the semi-discrete scheme {\tt EC6} conserves the entropy in theory, 
the results of the fully discrete scheme show small-magnitude entropy dissipation, 
which is solely introduced by time discretization. 
This is evidenced by the observation that the dissipation magnitude for {\tt EC6} becomes smaller when smaller CFL numbers are used. 
To further verify that the entropy conservative property converges in time with the temporal order, we 
show the logarithmic plots of the decrement of the total entropy in Figure \ref{fig:RP4_entropy2}. 
Again, Figure \ref{fig:RP4_entropy2}(a) clearly shows that the decrement magnitude becomes smaller for smaller time step-sizes. 
Moreover, Figure \ref{fig:RP4_entropy2}(b) indicates that the entropy dissipates at a rate of order ${\mathcal O}(\Delta t^5)$, ranging from $\sim 10^{-3}$ with ${\rm CFL}=0.4$ to $\sim 10^{-8}$ with ${\rm CFL}=0.05$. 
This demonstrates the convergence of the entropy conservative property and confirms that the semi-discrete scheme {\tt EC6} does conserve entropy.

\begin{figure}[htbp]
	\centering
	\subfigure[Entropy conservative scheme {\tt EC6}]{\includegraphics[width=0.48\textwidth]{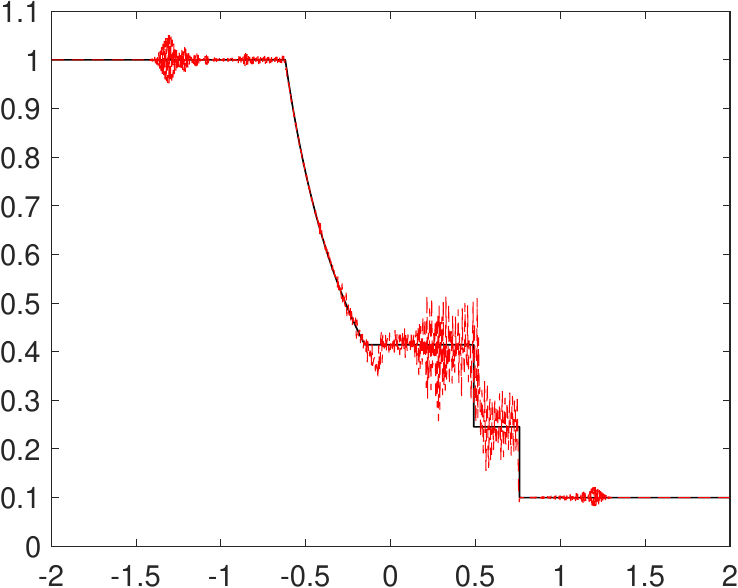}}
	\subfigure[Entropy stable scheme {\tt ES5}]{\includegraphics[width=0.48\textwidth]{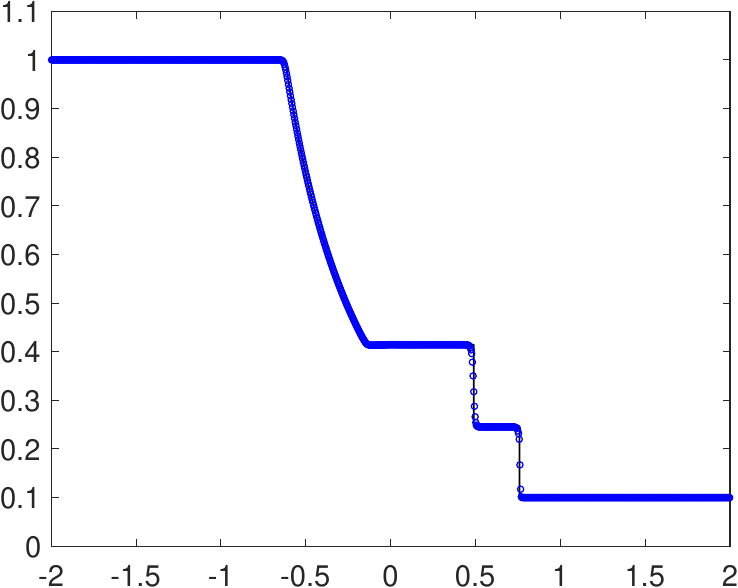}}
	\caption{\small 
		The density $\rho$ at $t=1$ for RP \uppercase\expandafter{\romannumeral4} in Example \ref{example1DRiemanns}. 
		The dash line and the symbols ``${\color{blue}\circ}$'' denote the numerical solutions 
		 computed by {\tt EC6} and {\tt ES5}, respectively, while 
		the solid lines denote the exact solutions obtained by the RP solver in \cite{giacomazzo_rezzolla_2006}.}
	\label{fig:RP4}
\end{figure}

\begin{figure}[htbp]
	\centering
	{\includegraphics[width=0.48\textwidth]{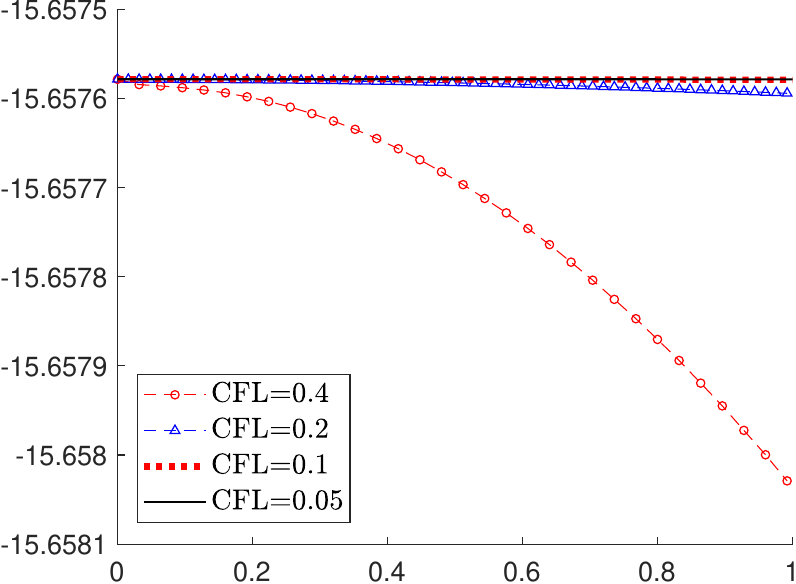}}
	{\includegraphics[width=0.48\textwidth]{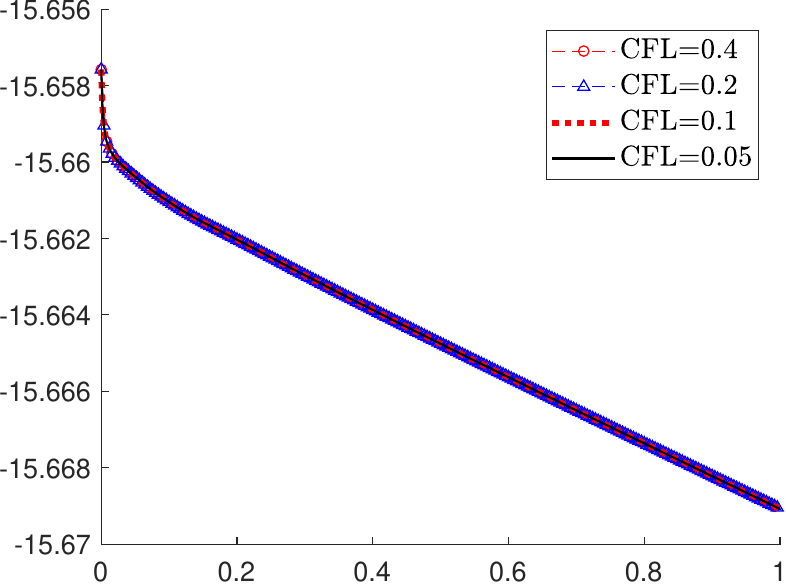}}
	\caption{\small 
		Time evolution of discrete total entropy for {\tt EC6} (left) and {\tt ES5} (right) with  fourth-order TVD Runge-Kutta time discretization and different CFL numbers.}
	\label{fig:RP4_entropy1}
\end{figure}

\begin{figure}[htbp]
	\centering
	\subfigure[Evolution of ${\mathcal E}_{\rm tot} (t) - {\mathcal E}_{\rm tot} (0)$    ]{\includegraphics[width=0.48\textwidth]{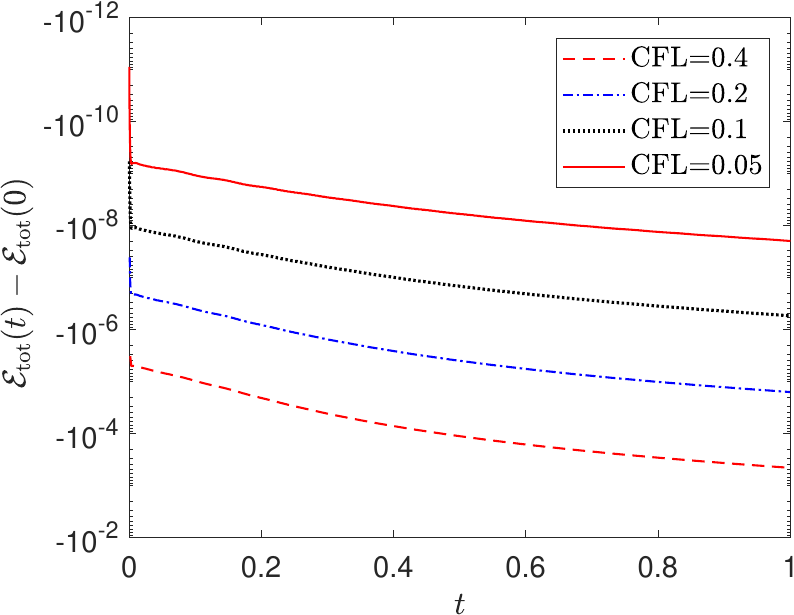}}~~~
	\subfigure[Decrement $|{\mathcal E}_{\rm tot} (1) - {\mathcal E}_{\rm tot} (0)|$ at $t=1$ ]{\includegraphics[width=0.48\textwidth]{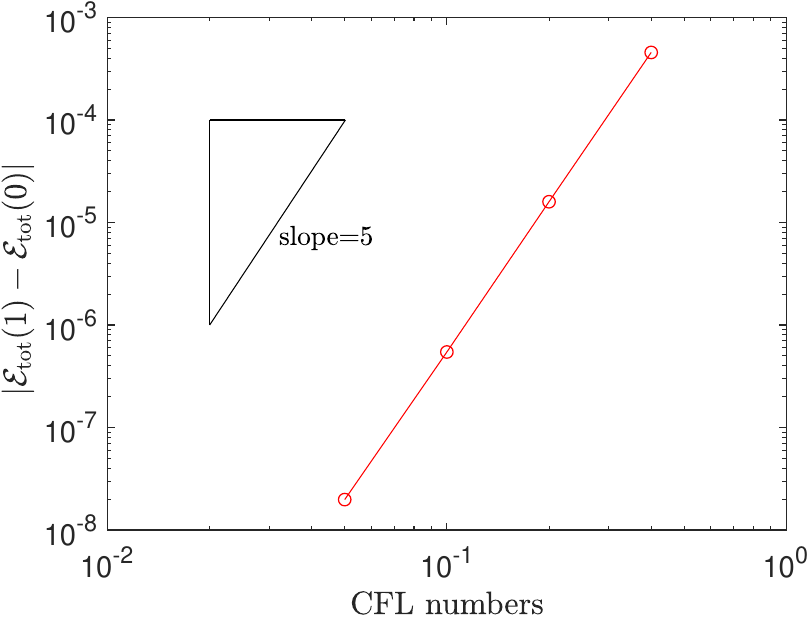}}
	\caption{\small 
		The decrement of total entropy for {\tt EC6} with fourth-order TVD Runge-Kutta time discretization and different CFL numbers.}
	\label{fig:RP4_entropy2}
\end{figure}

\end{expl}


\begin{expl}[Blast problem]\label{example2DBL}\rm
	Blast problem is a benchmark test for RMHD numerical schemes. 
	Our setup is the same as in \cite{MignoneHLLCRMHD,Zanotti2015}. Initially, the computational domain $[-6,6]^2$ is filled with a homogeneous gas at rest with adiabatic index $\Gamma=4/3$.
	The explosion zone ($r<0.8$) has a density of $10^{-2}$ and a pressure of $1$, while the ambient medium ($r>1$) has
	a low density of $10^{-4}$ and a low pressure of $5\times 10^{-4}$, where $r=\sqrt{x^2+y^2}$.
	A linear taper is applied to the density and pressure for $r\in[0.8,1]$. The magnetic field is initialized in the $x$-direction as $0.1$.

	Our numerical results at $t=4$, obtained by using {\tt ES5} on the mesh of $400\times 400$ uniform cells, are shown in Figure~\ref{fig:BL}. 
	We observe that the wave pattern of the configuration is composed by two main waves,
	an external fast   and a reverse shock waves. The former is almost circular, while the latter is somewhat elliptic.
	The magnetic field is essentially confined between them, while the inner region is almost devoid of magnetization. 
	Our numerical results agree quite well with those reported in \cite{Zanotti2015,WuTangM3AS}. To validate the entropy stability of {\tt ES5}, 
	we compute the total
	entropy $\sum_{i,j} {\mathcal E} ( {\bf U}_{ij} (t) ) \Delta x \Delta y $, 
	which should decrease with time if the scheme
	is entropy stable. 
	The evolution of total entropy is displayed in the left 
	of Figure~\ref{fig:Et} obtained by using {\tt ES5} at different gird resolutions. 
	We clearly see a
	monotonic decay which indicates that the fully discrete scheme {\tt ES5} is entropy stable.

		\begin{figure}[htbp]
		\centering
		{\includegraphics[width=0.48\textwidth]{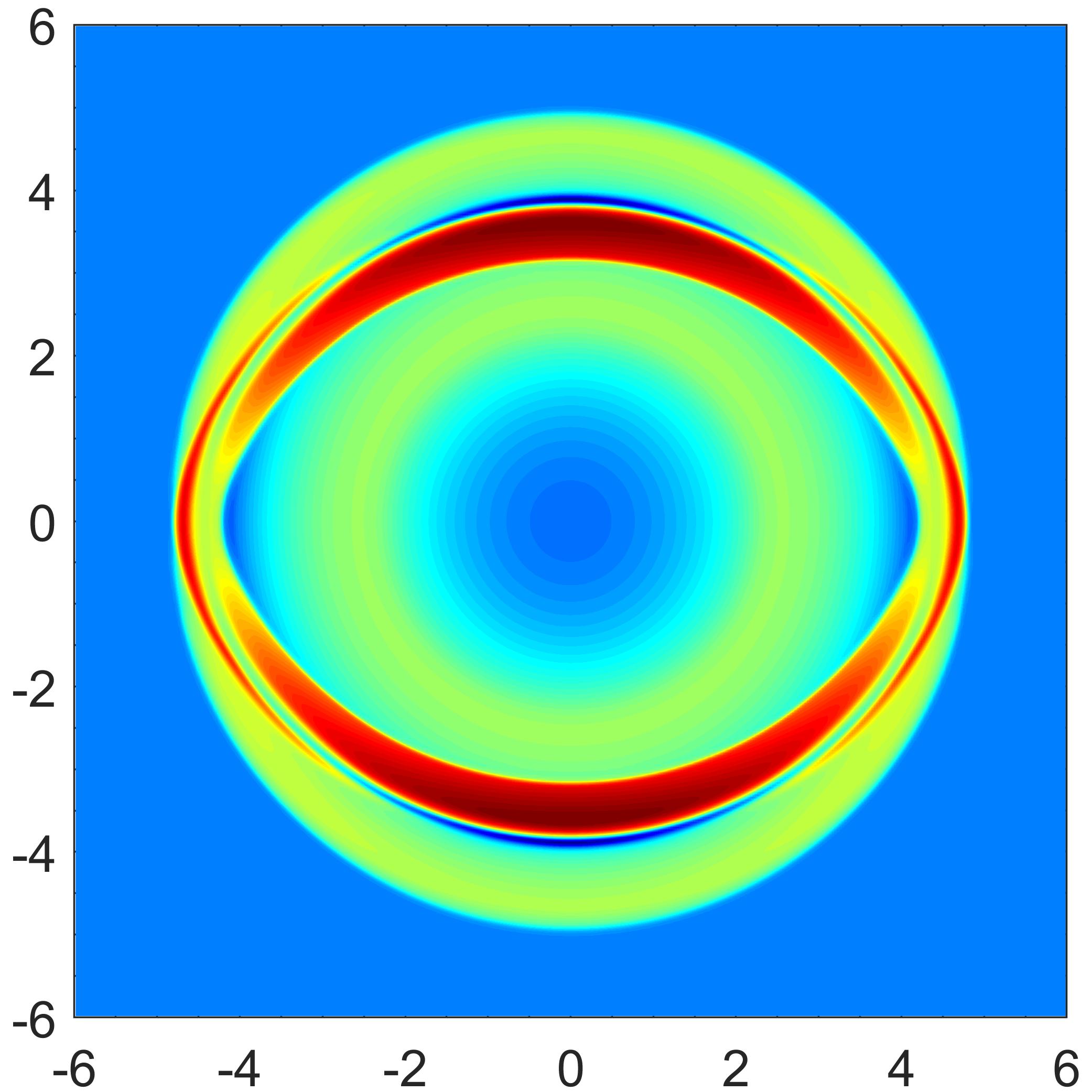}}
		{\includegraphics[width=0.48\textwidth]{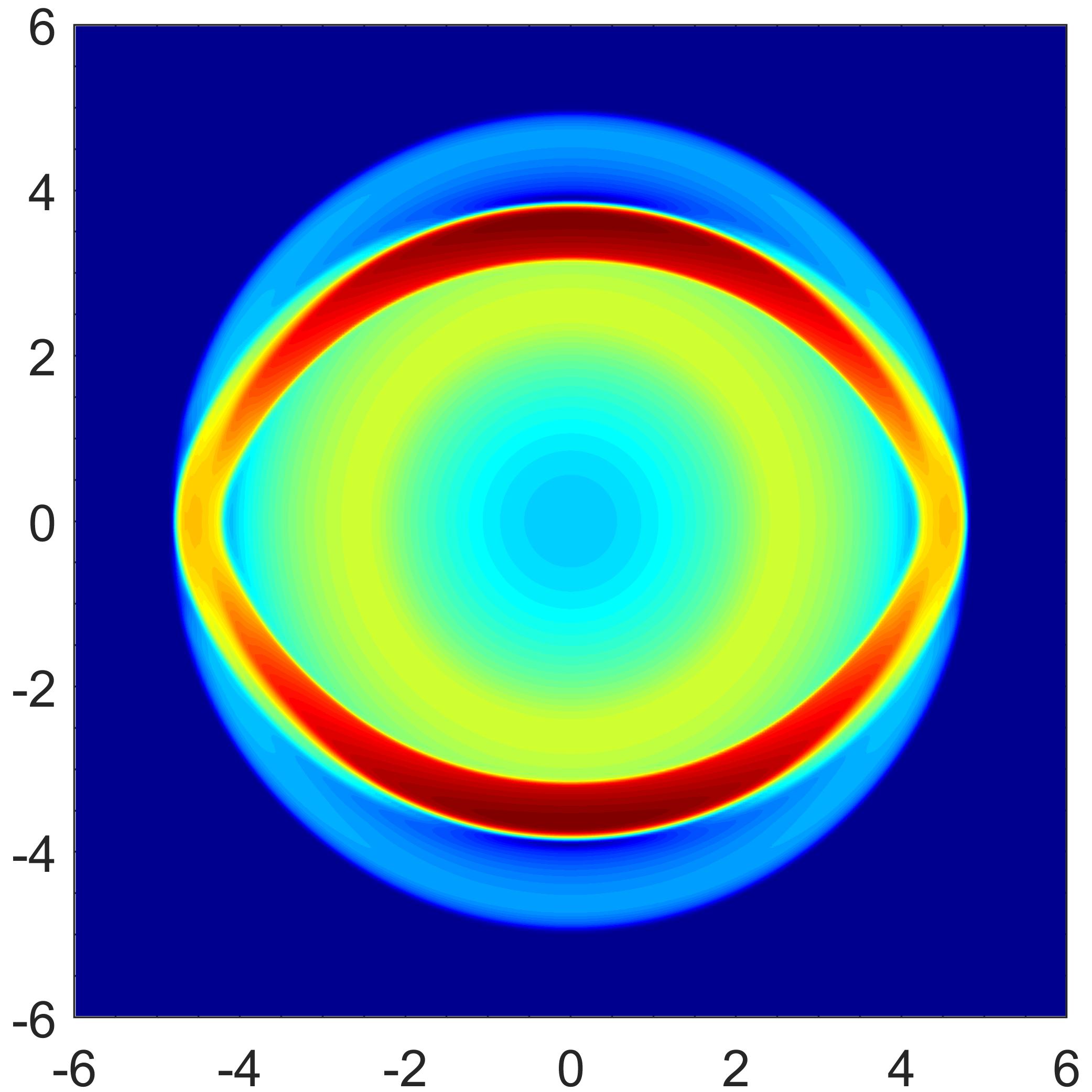}}
		{\includegraphics[width=0.48\textwidth]{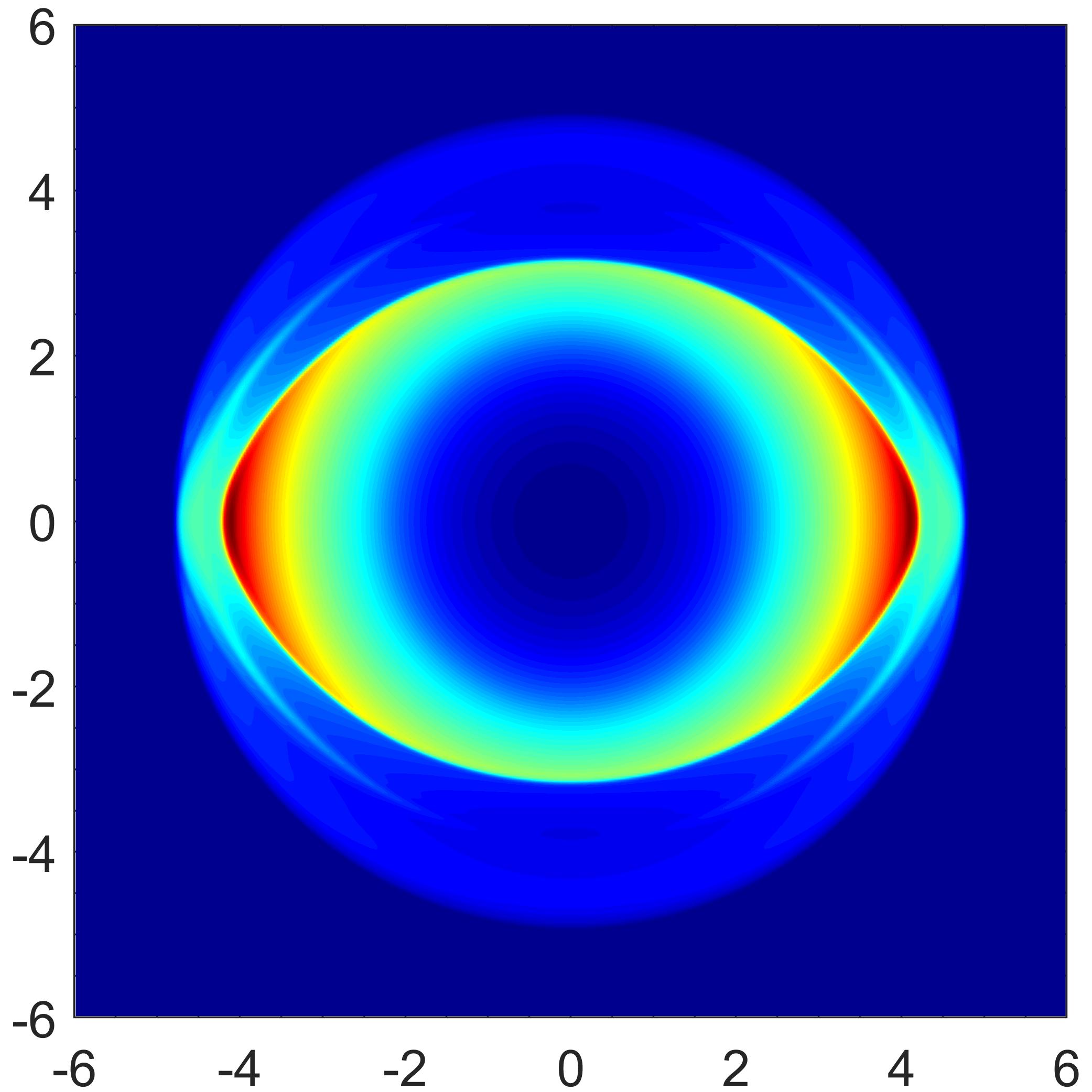}}
		{\includegraphics[width=0.48\textwidth]{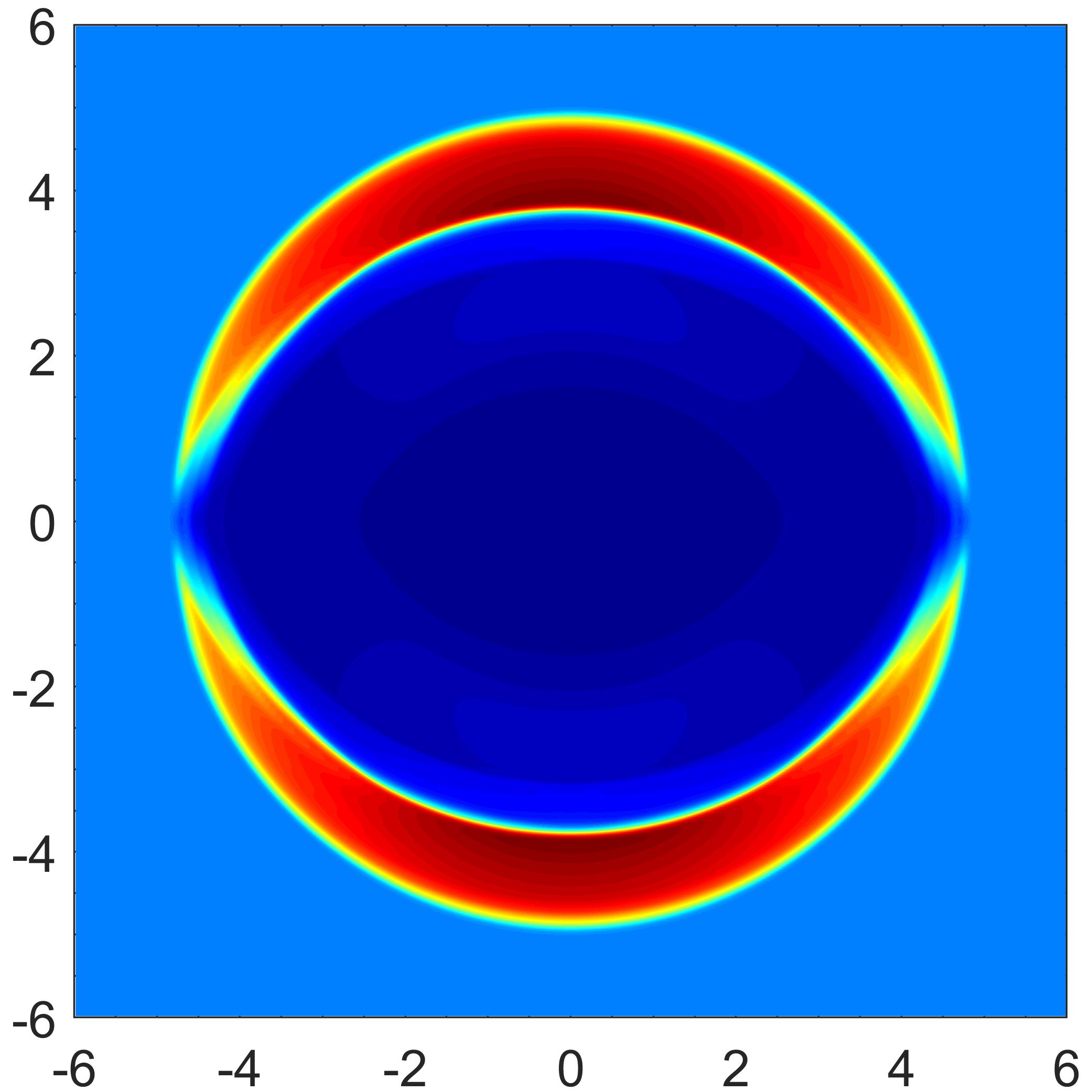}}
		\caption{\small Example \ref{example2DBL}: the schlieren images of 
			density logarithm (top-left), gas pressure (top-right), Lorentz factor (bottom-left) and magnetic field strength
			(bottom-right) at $t = 4$.}
		\label{fig:BL}
	\end{figure}

\end{expl}


\begin{expl}[Orszag-Tang problem]\label{example2DOT}\rm
	This test simulates the relativistic version \cite{Host:2008} of the classical Orszag-Tang problem \cite{orszag1979small}. 
	Our setup is the same as in \cite{Host:2008}. Initially, the computational domain $[0,2\pi]^2$ is filled 
	with 
	hot gas.
	We set the adiabatic index $\Gamma = 4/3$, 
	the initial pressure $p=10$, and the rest-mass density $\rho = 1$. 
	The initial velocity field of the fluid is given by 
	$
	{\bf v}(x,y,0)=   ( - A \sin(y), A\sin(x), 0   )^\top,
	$
	where the parameter $A=0.99/\sqrt{2}$ so that the maximum velocity is $0.99$ (the corresponding Lorentz factor is about $7$). The magnetic field is initialized at ${\bf B}(x,y,0)=(-\sin y, \sin(2x), 0)^\top$. Periodic conditions are specified at all the boundaries. 
	Although the solution of this problem is smooth initially,
	complicated wave structures are formed as the time increases, and turbulence
	behavior will be produced eventually.

	Figure~\ref{fig:OT} gives the numerical results obtained by using {\tt ES5} on $600\times 600$ uniform grids. 
	In comparison with the results in \cite{Host:2008}, 
	the complicated flow structures are well captured by {\tt ES5} with high resolution. 
	To validate the entropy stability, the evolution of total entropy is shown in the right 
	 of Figure~\ref{fig:Et} obtained by using {\tt ES5} at different gird resolutions. We observe that the total entropy 
	remains constant 
	at initial times because initially the solution is smooth, and it starts to decrease at $t\approx 2$ when discontinuities start to form, as expected.

		\begin{figure}[htbp]
		\centering
		{\includegraphics[width=0.48\textwidth]{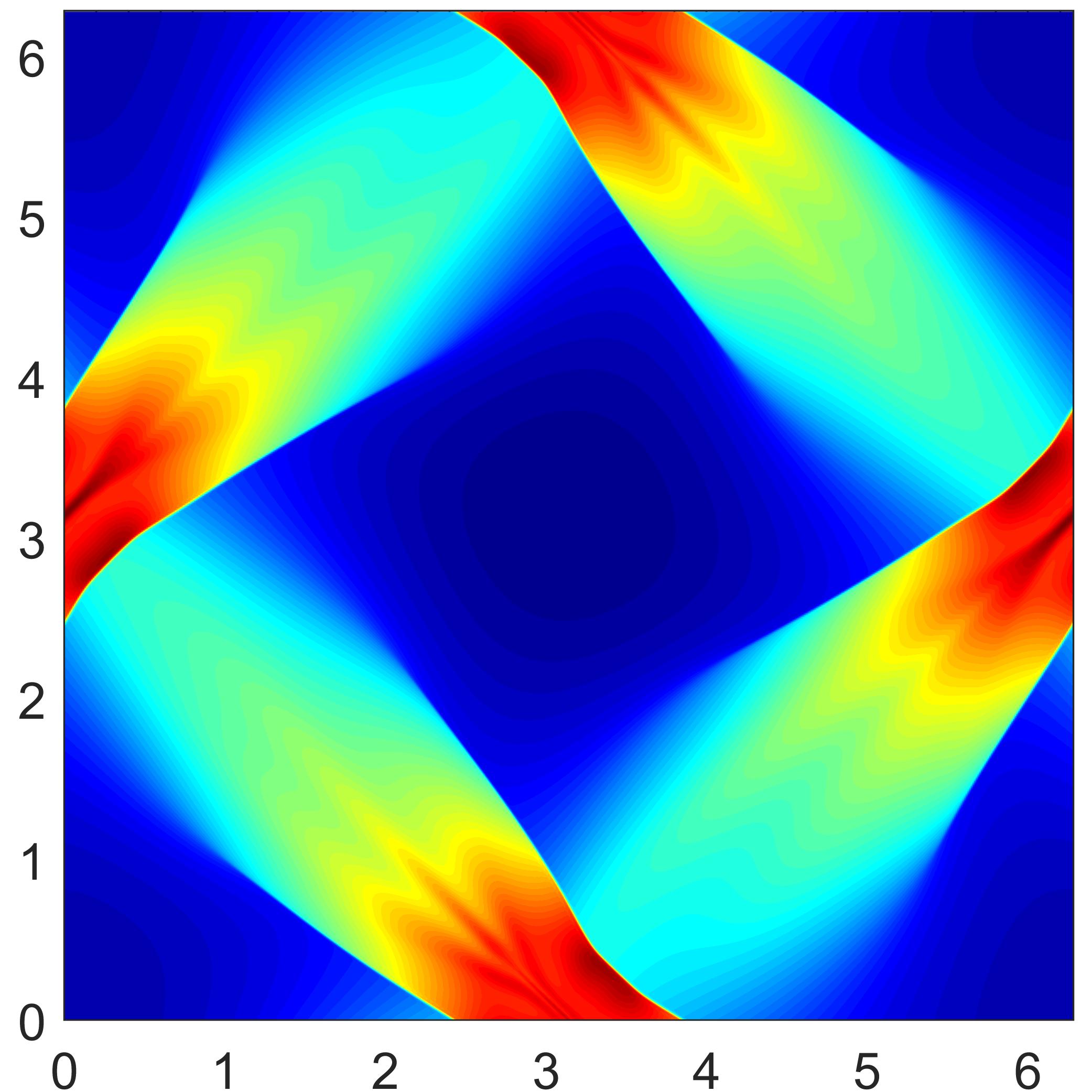}}
		{\includegraphics[width=0.48\textwidth]{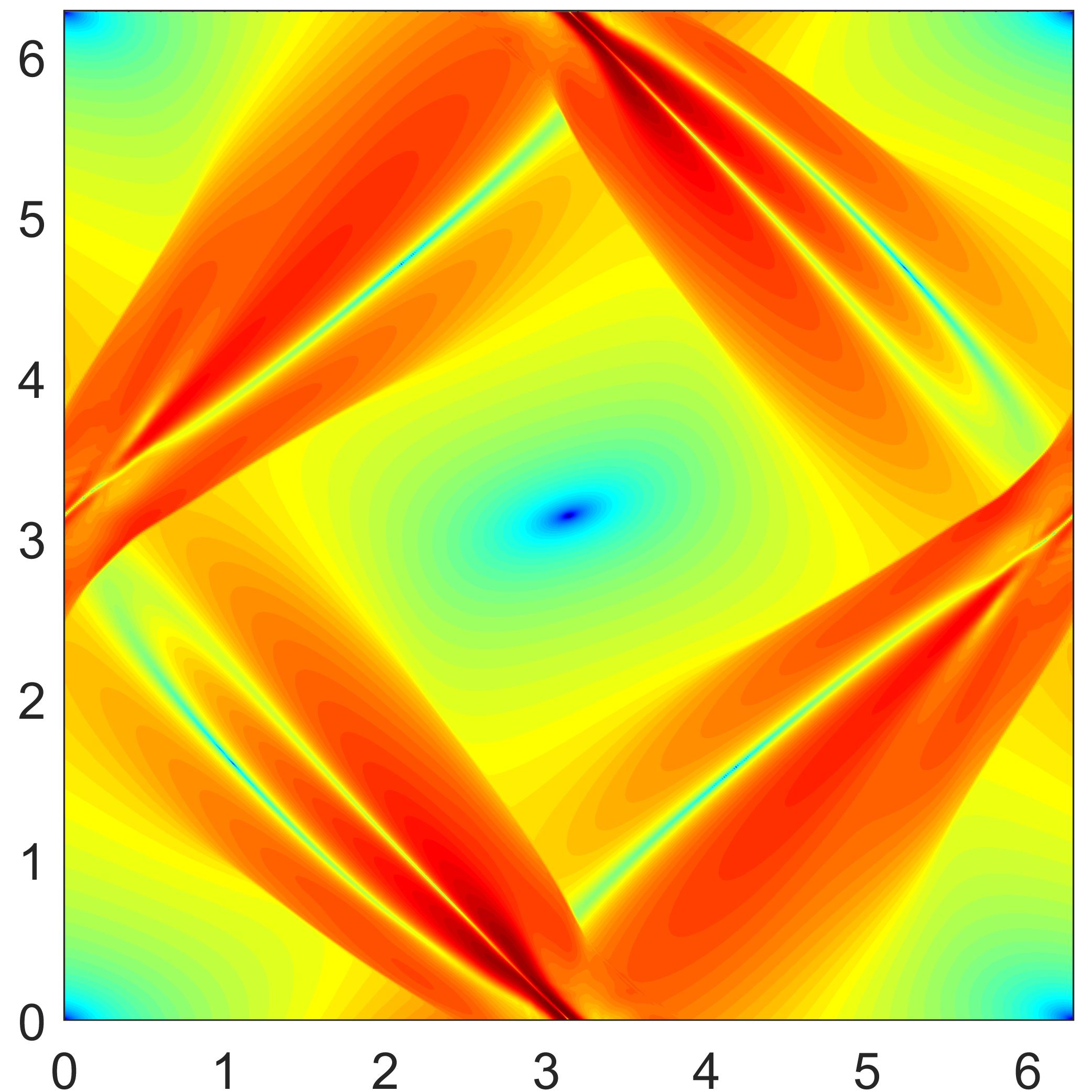}}
		{\includegraphics[width=0.48\textwidth]{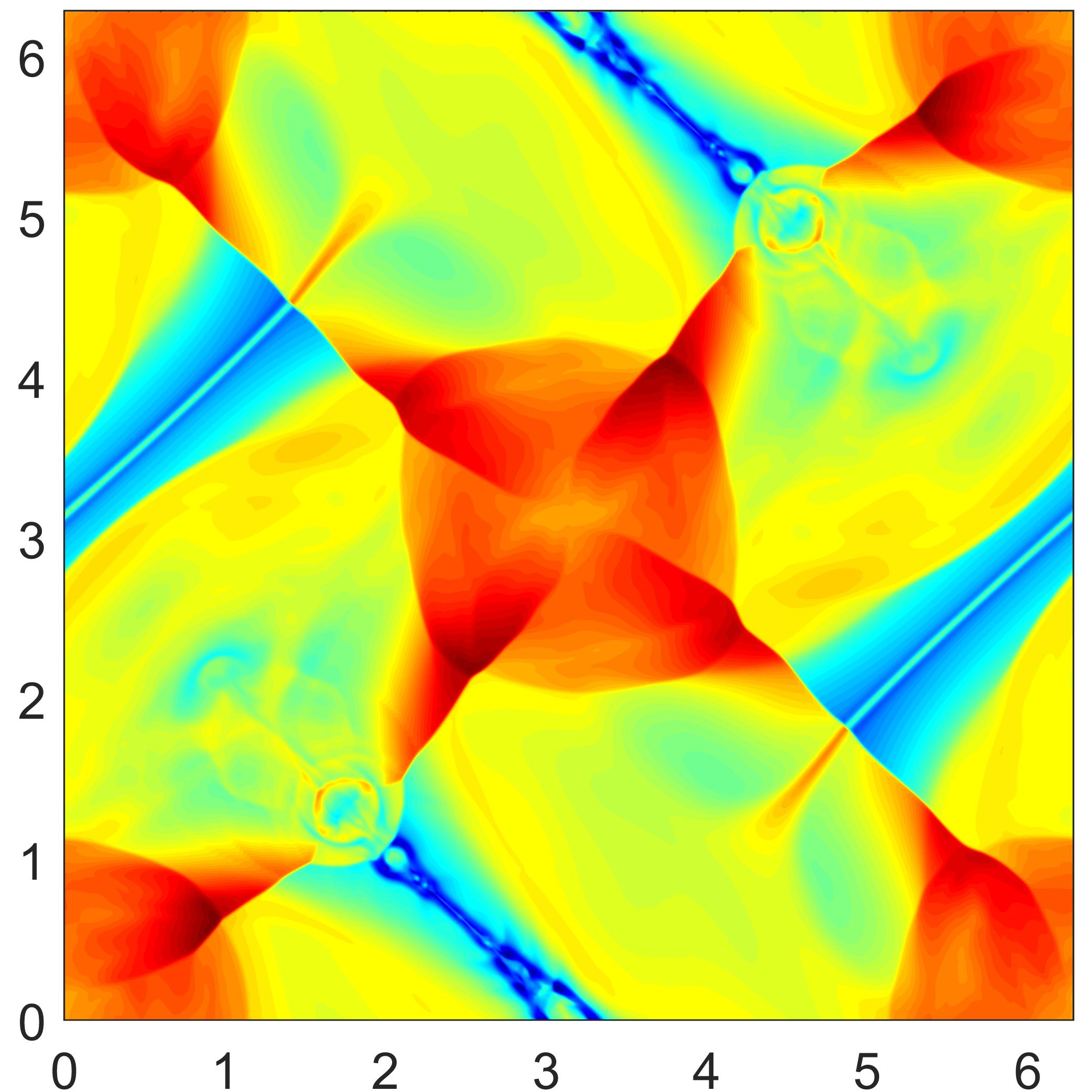}}
		{\includegraphics[width=0.48\textwidth]{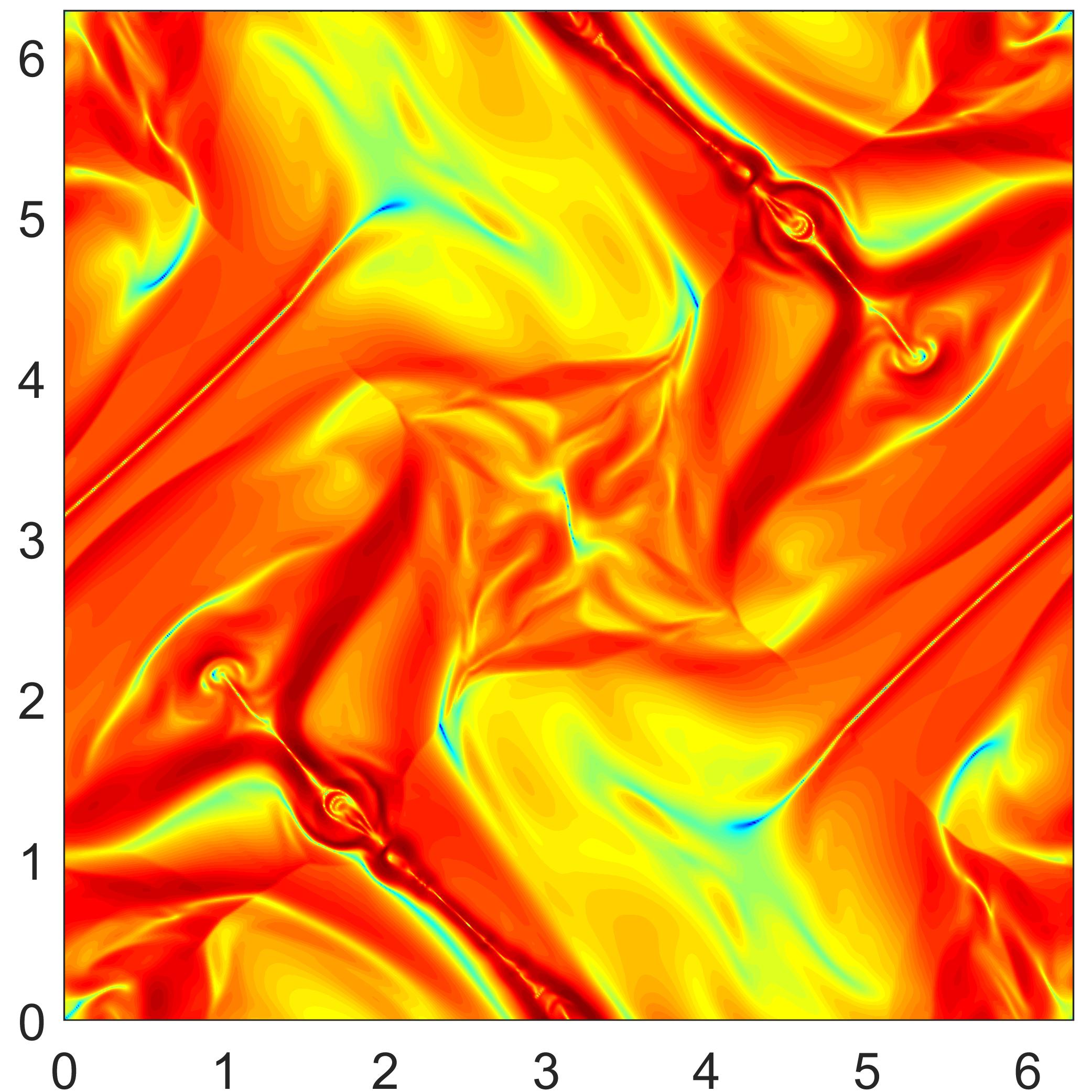}}
		\caption{\small Example \ref{example2DOT}: the schlieren images of rest-mass density logarithm (left) and magnetic pressure logarithm (right). Top: $t=2.818127$; 
			bottom: $t=6.8558$.} 
		\label{fig:OT}
	\end{figure}

	\begin{figure}[htbp]
		\centering
		\includegraphics[width=0.48\textwidth]{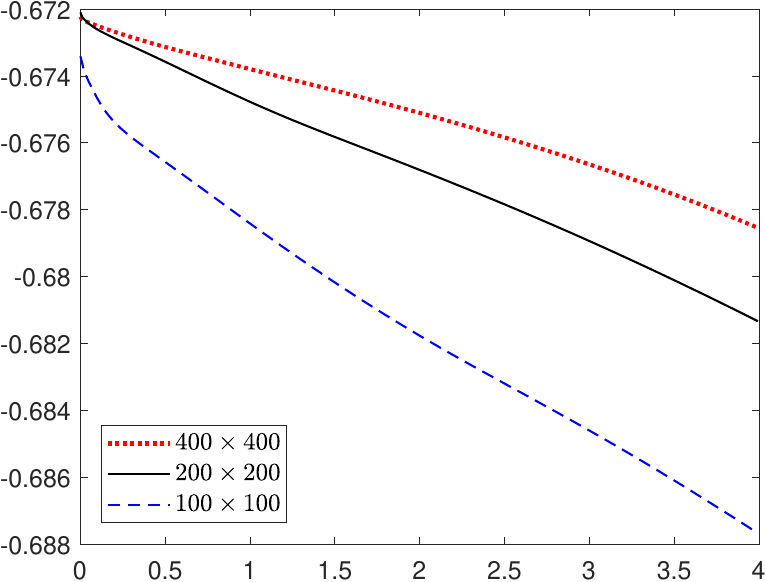}
		\includegraphics[width=0.48\textwidth]{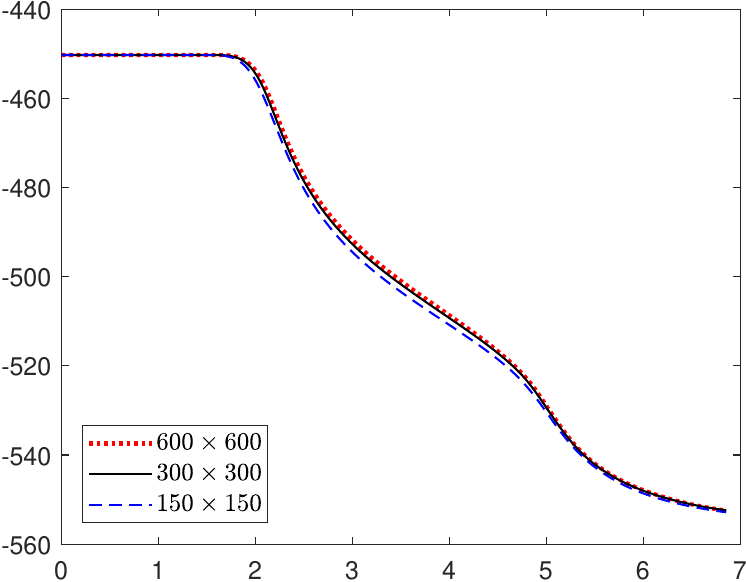}
		\caption{\small
			Evolution of total entropy for the blast problem (left) and the Orszag-Tang problem (right), obtained by using ${\tt ES 5}$ at different grid resolutions. 
		}\label{fig:Et} 
	\end{figure}

\end{expl}


\begin{expl}[Shock cloud interaction problem]\label{example2DSC}{\rm
		This problem describes the disruption of a high density cloud by a strong shock wave. Our setup is the same as in \cite{HeTang2012RMHD}.
		The computational domain is $[-0.2,1.2]\times [0,1]$, with the left boundary specified as
		inflow condition and the others as outflow conditions. Initially, a shock wave moves to the right from $x=0.05$,
		with the left and right states  
		\begin{align*}
		& {\vec V}_L = (3.86859,0.68,0,0,0,0.84981,-0.84981,1.25115)^\top,
		\\
		& {\vec V}_R = (1,0,0,0,0,0.16106,0.16106,0.05)^\top,
		\end{align*}
		respectively. 
		There exists a rest circular cloud centred at the point $(0.25,0.5)$ with radius 0.15.  The cloud has the same states to the surrounding fluid except for a higher density of 30. Figure~\ref{fig:SC} displays the schlieren images of rest-mass density logarithm and magnetic pressure logarithm at $t=1.2$ obtained by using {\tt ES5} with $560\times 400$ uniform cells. One can see that the discontinuities are captured with high resolution, and the results agree well with those in \cite{HeTang2012RMHD,ZhaoTang2017,WuTangM3AS}.

	\begin{figure}[htbp]
		\centering
		{\includegraphics[width=0.49\textwidth]{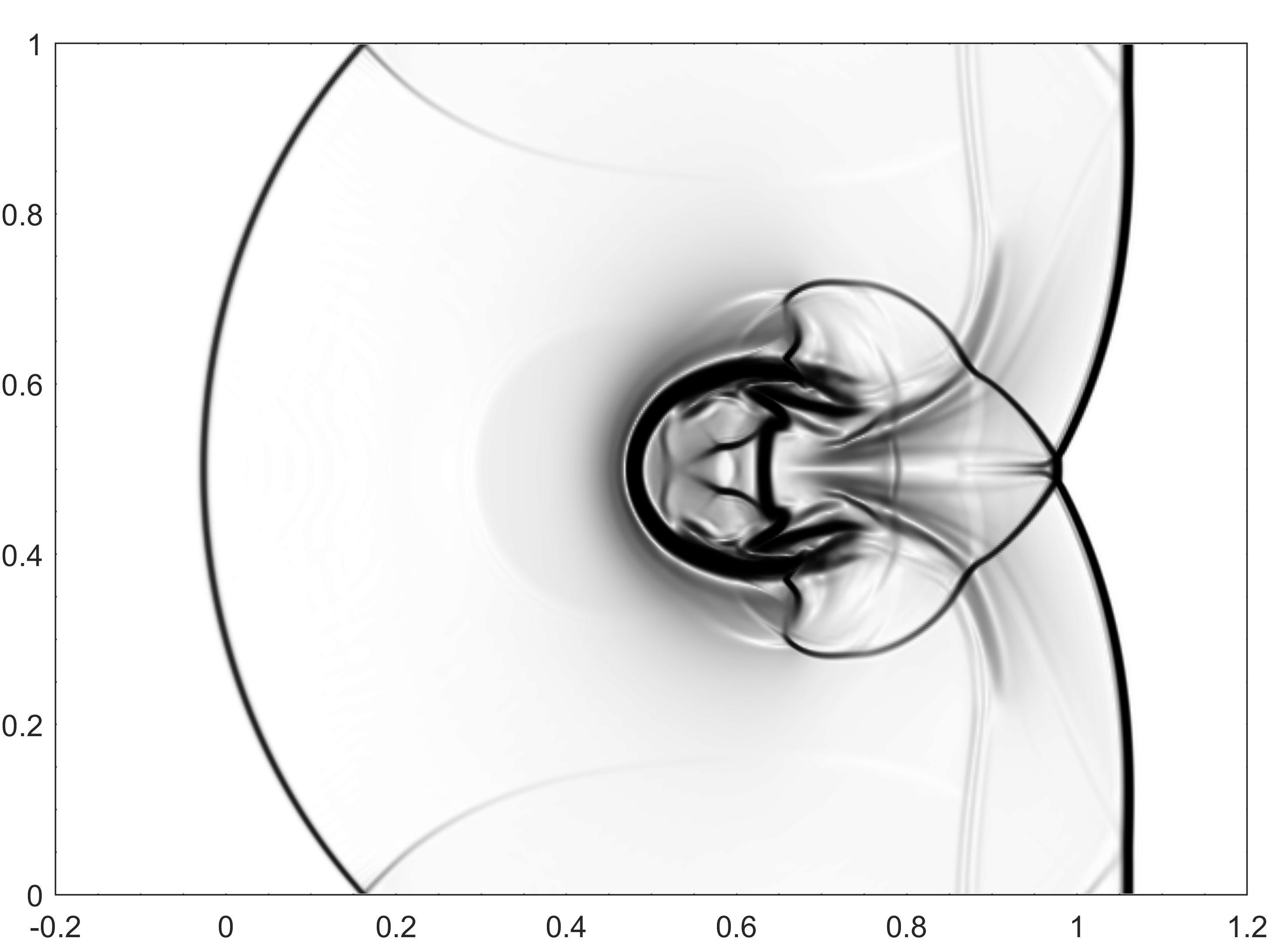}}
		{\includegraphics[width=0.49\textwidth]{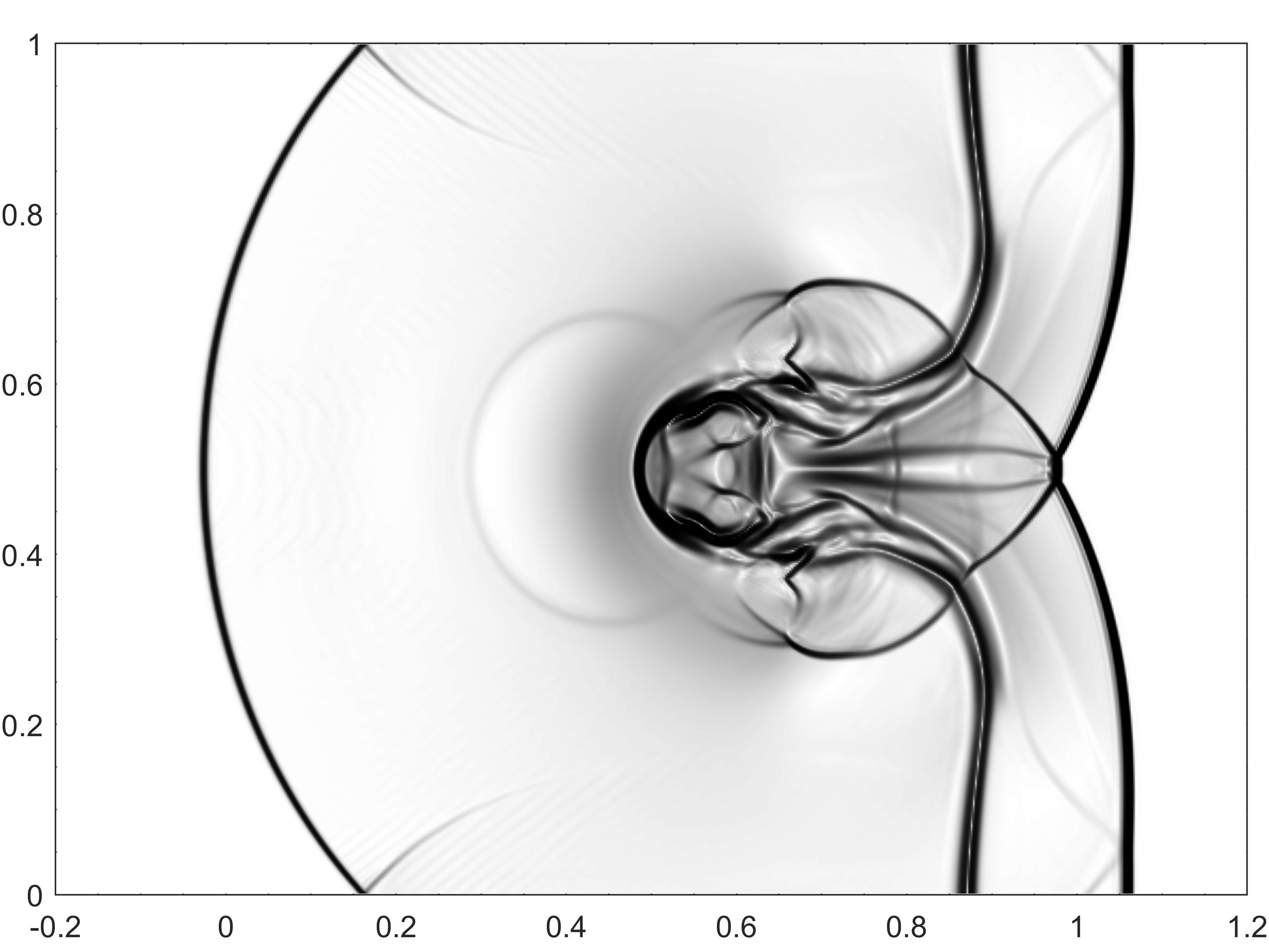}}
		\caption{\small Example \ref{example2DSC}: the schlieren images of rest-mass density logarithm (left) and magnetic
			pressure logarithm (right) at time $t = 1.2$.}
		\label{fig:SC}
	\end{figure}

	}
\end{expl}

\section{Conclusions}\label{sec:conclusion}
In this paper, we presented rigorous entropy analysis and developed high-order accurate entropy stable schemes for the RMHD equations. 
We proved that the conservative RMHD equations \eqref{eq:RMHD} are not symmetrizable and thus do not admit an entropy pair. To address this issue, we first proposed a symmetrizable RMHD system \eqref{ModRMHD} by building the divergence-free condition \eqref{eq:DivB} into 
the RMHD equations through an additional source term. 
Based on the symmetrizable RMHD system, high-order accurate entropy stable finite difference schemes are developed on Cartesian meshes. 
These schemes are built on 
affordable explicit entropy conservative fluxes that are technically derived through carefully selected parameter variables, 
a special high-order discretization of the source term in the symmetrizable RMHD system, and 
suitable high-order dissipative operators based on (weighted) essentially non-oscillatory reconstruction to ensure the entropy stability.  
The accuracy and robustness of the proposed entropy stable schemes were demonstrated by benchmark numerical RMHD examples. 
{\em The symmetrizable RMHD system is 
	a relativistic analogue to 
	the non-relativistic Godunov--Powell system \cite{Godunov1972,Powell1994}. Its discovery would also enable one to generalize a large class of related numerical methods, including some other entropy stable schemes (e.g.~\cite{LiuShuZhang2017}) and Powell's eight-wave methods, to RMHDs.}

\bibliographystyle{siamplain}
\bibliography{references}

\end{document}